\documentclass[11pt,a4paper,reqno]{amsart}

\usepackage[utf8]{inputenc}
\usepackage[T1]{fontenc}
\usepackage{lmodern}

\usepackage{amsmath,amssymb,amsthm}
\usepackage{mathtools}
\usepackage{booktabs}
\usepackage{tabularx}
\usepackage{placeins}
\numberwithin{equation}{section}

\usepackage[margin=1in]{geometry}
\usepackage{enumitem}

\usepackage[hidelinks]{hyperref}
\hypersetup{pdftitle={Box dimension prints},pdfauthor={Peizhi Liu}}

\theoremstyle{plain}
\newtheorem{theorem}{Theorem}[section]
\newtheorem{lemma}[theorem]{Lemma}
\newtheorem{proposition}[theorem]{Proposition}
\newtheorem{corollary}[theorem]{Corollary}

\theoremstyle{definition}
\newtheorem{definition}[theorem]{Definition}
\newtheorem{example}[theorem]{Example}

\theoremstyle{remark}
\newtheorem{remark}[theorem]{Remark}

\title{Box dimension prints}
\author{Peizhi Liu}
\thanks{Corresponding author: Peizhi Liu. School of Mathematics and Statistics,
Nanjing University of Science and Technology, Nanjing 210094, China.
Email: \href{mailto:liupeizhi@njust.edu.cn}{liupeizhi@njust.edu.cn}.
ORCID: \href{https://orcid.org/0000-0003-3825-5079}{0000-0003-3825-5079}.}

\subjclass[2020]{Primary 28A80; Secondary 28A75, 28A78}
\keywords{Dimension prints, box dimension, anisotropic coverings, eccentricity profiles, non-degenerate curves, projective invariance}

\date{}

\begin{document}

\begin{abstract}
We study lower and upper box dimension prints for bounded subsets of \(\mathbb R^n\), defined by weighted covering numbers for independently oriented rectangular boxes with prescribed ordered side-length bounds. The limits range over all eccentricities, including unbounded aspect ratios. For every non-empty bounded set, we identify the closure of the lower print with the intersection of the half-spaces determined by its lower eccentricity profile, without any uniformity assumption. The profile equals the support function of this closure if and only if it is subadditive. A planar product example has distinct lower and upper profiles on every ray and an explicitly computable lower-print closure. We also prove that both prints are invariant under nonsingular projective transformations on compact sets avoiding the pole hyperplane. Uniform anisotropic covering estimates determine both prints, including their boundary points, for non-degenerate curves of type \((1,\ldots,n)\), their Ahlfors regular parameter subsets, and higher-dimensional spheres. Finally, local covering-count and product-measure criteria identify the lower box print with the Hausdorff dimension print, while a reciprocal-sequence example shows that this inclusion can be strict.
\end{abstract}

\maketitle

\section*{Acknowledgements}
The author thanks Lars Olsen for the invitation to visit the Analysis
Group at the University of St Andrews, where part of this work was
carried out, and Kenneth Falconer for helpful discussions. The visit
was supported by the China Scholarship Council
(award No.\ 202506840058).

\newpage

\section{Introduction}
\label{sec:introduction}

Rogers introduced \emph{dimension prints} to distinguish sets with
the same Hausdorff dimension but different geometric properties
\cite{Rogers1988}. A line segment and a product of Cantor sets, for
example, may both have Hausdorff dimension one but admit different
anisotropic coverings. Rogers' construction assigns an exponent to
each ordered side length of a rectangular covering box. The resulting
invariant is a set of exponent vectors. 

We recall Rogers' definition. Let \(\mathfrak B_n\) be the family
of all open rectangular boxes in \(\mathbb R^n\), with arbitrary
orientation, and write
\[
    l_1(B)\geq\cdots\geq l_n(B)>0
\]
for the ordered side lengths of \(B\in\mathfrak B_n\). For
\(\alpha=(\alpha_1,\ldots,\alpha_n)\in[0,\infty)^n\), set
\[
    r^\alpha(B)=\prod_{i=1}^n l_i(B)^{\alpha_i}.
\]
Given \(E\subset\mathbb R^n\) and \(\eta>0\), define
\[
    \mu^\alpha_\eta(E)
    =
    \inf\left\{
        \sum_j r^\alpha(B_j):
        E\subseteq\bigcup_j B_j,\quad
        B_j\in\mathfrak B_n,\quad
        \operatorname{diam} B_j\leq\eta
    \right\},
\]
where the infimum is taken over finite or countable covers, and put
\[
    \mu^\alpha(E)
    =
    \lim_{\eta\downarrow0}\mu^\alpha_\eta(E).
\]
The \emph{Hausdorff dimension print} of \(E\) is
\[
    \mathrm P_{\mathrm H}(E)
    =
    \left\{
        \alpha\in[0,\infty)^n:
        \mu^\alpha(E)>0
    \right\}.
\]
When \(\alpha=(s,0,\ldots,0)\), the quantity \(\mu^\alpha(E)\)
is comparable to the usual \(s\)-dimensional Hausdorff measure.
Consequently, for non-empty \(E\),
\[
    \dim_{\mathrm H}E
    =
    \sup\left\{
        s\geq0:
        (s,0,\ldots,0)\in\mathrm P_{\mathrm H}(E)
    \right\}.
\]

In Rogers' construction, the size and shape of the boxes may vary
within a cover. For a box-counting counterpart, we prescribe a common
ordered vector of side-length bounds and minimise the number of
admissible boxes, retaining independent orientations.

\begin{definition}
\label{def:box-dimension-prints}
For \(n\geq1\), let
\[
    \Delta_n
    =
    \left\{
        \delta=(\delta_1,\ldots,\delta_n)\in(0,1)^n:
        \delta_1\geq\cdots\geq\delta_n
    \right\}.
\]
For a non-empty bounded set \(F\subset\mathbb R^n\) and
\(\delta\in\Delta_n\), let \(N(F;\delta)\) be the least number
of boxes from \(\mathfrak B_n\) required to cover \(F\), subject
to
\[
    l_i(B)\leq\delta_i,
    \qquad 1\leq i\leq n,
\]
for every box in the cover. For \(\alpha\in[0,\infty)^n\),
write
\[
    \delta^\alpha=\prod_{i=1}^n\delta_i^{\alpha_i}
\]
and define
\[
\begin{aligned}
    \underline{\mathcal B}^{\alpha}(F)
    &=
    \liminf_{\substack{\delta\in\Delta_n\\
                       \delta_1\downarrow0}}
    N(F;\delta)\delta^\alpha,\\
    \overline{\mathcal B}^{\alpha}(F)
    &=
    \limsup_{\substack{\delta\in\Delta_n\\
                       \delta_1\downarrow0}}
    N(F;\delta)\delta^\alpha.
\end{aligned}
\]
The \emph{lower} and \emph{upper box dimension prints} of \(F\)
are, respectively,
\[
\begin{aligned}
    \underline{\mathrm P}(F)
    &=
    \left\{
        \alpha\in[0,\infty)^n:
        \underline{\mathcal B}^{\alpha}(F)>0
    \right\},\\
    \overline{\mathrm P}(F)
    &=
    \left\{
        \alpha\in[0,\infty)^n:
        \overline{\mathcal B}^{\alpha}(F)>0
    \right\}.
\end{aligned}
\]
\end{definition}

The limits range over all of \(\Delta_n\), with no restriction on
the relative rates at which the side lengths tend to zero. Thus the
aspect ratios may be unbounded. Membership in the lower print
requires a positive lower bound for the weighted covering number at
every sufficiently small admissible scale. Membership in the upper
print requires such a bound along a sequence of scales. Restriction
to isotropic scales \(\delta=(r,\ldots,r)\) gives the usual critical
exponents for lower and upper box dimension. Moreover,
\[
    \mathrm P_{\mathrm H}(F)
    \subseteq \underline{\mathrm P}(F)
    \subseteq \overline{\mathrm P}(F).
\]

Lee and Baek construct their coordinate \(d\)-dimension print from
weighted counts of fixed grid rectangles, followed by an infimum
over countable set covers
\cite{LeeBaek1995}.
Definition~\ref{def:box-dimension-prints} applies lower and upper
limits directly to minimal covering counts in \(\mathbb{R}^n\),
without this countable-cover regularisation. Reyes and Rogers \cite{ReyesRogers1994} computed Hausdorff dimension
prints for generalised Cantor subsets of the circle and graphs of
generalised Lebesgue functions, and obtained Cartesian-product
formulae under measure and dimension-additivity hypotheses.
Cr\u{a}ciun and Zamfirescu \cite{CraciunZamfirescu1997} studied the
Hausdorff prints of typical convex surfaces. The circular Cantor
and Cantor-product Hausdorff regions below serve as comparisons
with these earlier computations. Our covering estimates identify
the corresponding lower box prints and determine the upper prints,
including their strict and non-strict boundary conditions.

Other adaptations include the space--time prints of Robinson and
Sharples \cite{RobinsonSharples2013} for avoidance of sets by flows,
and the dimension-print description of rectangular regularity via
Faber--Schauder coefficients in \cite{BenAbidEtAl2021}. These
constructions use distinguished coordinate or space--time roles.

We first relate the prints to anisotropic covering growth. The lower
and upper \emph{eccentricity profiles} record the exponential growth
of \(N(F;\delta)\) along scales \(\delta_i=e^{-ta_i}\), where
\(0<a_1\leq\cdots\leq a_n\) and \(t\to\infty\). The lower print
is convex. Theorem~\ref{thm:lower-profile-closure} expresses its
entire closure as the intersection of the half-spaces determined
by the lower profile, for every non-empty bounded set and without
a uniform covering law. For the coordinate print, Lee and Baek
identify the two axis sections of its closure in terms of modified
lower box dimensions \cite[Theorems~8--9]{LeeBaek1995}.
Conversely, the lower
profile is the support function of this closed convex set if and
only if it is subadditive
(Corollary~\ref{thm:reverse-profile-duality-general}). The planar construction in
Example~\ref{ex:oscillating-planar-product} has distinct lower and
upper profiles on every ray and illustrates the unrestricted
closure theorem. Under a covering estimate uniform in eccentricity,
Proposition~\ref{thm:profile-variational} determines both prints
exactly, including their boundary points.

Linear invariance of the Hausdorff print, and its failure for the
coordinate print, are recalled in
\cite{LeeBaek1995}. For the present covering
numbers, Proposition~\ref{prop:affine-invariance-prints}\textup{(iii)}
gives affine comparability.
Theorem~\ref{thm:projective-invariance-exact-prints} extends this to
every nonsingular projective map \(T\): for a compact set \(K\)
disjoint from its pole hyperplane, there is a constant \(C\geq1\)
such that
\[
 C^{-1}N(F;\delta)\leq N(T(F);\delta)\leq C N(F;\delta)
 \qquad(\varnothing\neq F\subseteq K,\ \delta\in\Delta_n).
\]
This compares covering numbers at the same side-length vector,
uniformly in \(F\) and in eccentricity, and implies invariance of
both prints. General
bi-Lipschitz maps need not preserve the prints: a segment and a
parabolic arc have different lower prints. Thus the prints detect
features of an embedding that are preserved by projective maps
but need not be preserved by bi-Lipschitz equivalence.

For non-degenerate curves of uniform type \((1,\ldots,n)\), we prove
sharp covering estimates. The corresponding
prints for images of Ahlfors \(s\)-regular parameter sets have the
same weights, with the critical bound multiplied by \(s\)
(Theorem~\ref{thm:fractal-parameter-prints}).
For higher-dimensional spheres, the covering law also contains an
annular regime. 

Finally, we give a sufficient condition for equality of the lower
box print and the Hausdorff dimension print in terms of
nonconcentration of fine isotropic covering numbers in rectangles.
We also prove equality for compact subsets of Cartesian products
with positive product Hausdorff measure, when each factor has
positive finite Hausdorff measure in its dimension and its upper
box dimension agrees with its Hausdorff dimension.

\section{Covering estimates and eccentricity profiles}
\label{sec:2}
\label{sec:definitions}
\label{sec:structural-theory}

We first establish the covering estimates underlying the structural
properties of the prints.

Throughout, sets whose box dimension prints or covering numbers are
considered are non-empty and bounded.
Let \(\mathcal{R}_n(\delta)\) denote the family of open boxes admissible
for \(N(F;\delta)\). We use \(X\lesssim Y\) to mean
\(X\leq CY\) for a positive constant independent of the scale
parameters, and write \(X\asymp Y\) when both \(X\lesssim Y\)
and \(Y\lesssim X\) hold.

\begin{remark}
\label{re:1}
\leavevmode
\begin{enumerate}[label=\textup{(\roman*)}]
\item For \(X_\alpha(\delta)=N(F;\delta)\delta^\alpha\), membership in
\(\underline{\mathrm{P}}(F)\) is equivalent to the existence of \(c,\eta>0\) such that
\(X_\alpha(\delta)\geq c\) whenever \(\delta_1<\eta\).
Membership in \(\overline{\mathrm{P}}(F)\) is equivalent to the existence of \(c>0\)
and a sequence \(\delta^{(j)}\in\Delta_n\) such that
\(\delta^{(j)}_1\to0\) and \(X_\alpha(\delta^{(j)})\geq c\) for
every \(j\).

\item The ordinary lower and upper box dimensions are recovered by restricting
to the isotropic scales
\[
\delta=(\varepsilon,\ldots,\varepsilon).
\]
Indeed, put
\[
M_F(\varepsilon)
=
N\bigl(F;(\varepsilon,\ldots,\varepsilon)\bigr).
\]
For \(\alpha\in[0,\infty)^n\), writing
\(s=\alpha_1+\cdots+\alpha_n\), the isotropic restriction gives
\[
N(F;\delta)\prod_{i=1}^n\delta_i^{\alpha_i}
=
M_F(\varepsilon)\varepsilon^s.
\]
Thus,
\[
\begin{aligned}
\underline{\dim}_{\mathrm{B}} F
&=
\inf\left\{
s\geq0:
\liminf_{\varepsilon\downarrow0}
M_F(\varepsilon)\varepsilon^s=0
\right\},\\
\overline{\dim}_{\mathrm{B}} F
&=
\inf\left\{
s\geq0:
\limsup_{\varepsilon\downarrow0}
M_F(\varepsilon)\varepsilon^s=0
\right\}.
\end{aligned}
\]

\item For every \(F\subset\mathbb{R}^n\),
\begin{equation}\label{eq:rogers-box-chain}
        \mathrm P_{\mathrm H}(F)
        \subseteq
        \underline{\mathrm P}(F)
        \subseteq
        \overline{\mathrm P}(F).
\end{equation}
Indeed, each admissible box has diameter at most \(\sqrt n\,\delta_1\)
and weight at most \(\delta^\alpha\), whence
\[
 \mu^\alpha_{\sqrt n\,\delta_1}(F)
 \leq N(F;\delta)\delta^\alpha.
\]
Taking the lower limit proves the first inclusion; the second follows
from the definitions.
\end{enumerate}
\end{remark}

\begin{proposition}
\label{prop:basic-covering-calculus}
\label{prop:affine-invariance-prints}
Let \(\delta,\eta\in\Delta_n\).
The covering numbers satisfy the following properties.
\begin{enumerate}[label=\textup{(\roman*)}]
\item If \(E\subseteq F\), then \(N(E;\delta)\leq N(F;\delta)\), and
for a finite family \(F_1,\ldots,F_m\),
\[
 \max_j N(F_j;\delta)
 \leq N\Bigl(\bigcup_jF_j;\delta\Bigr)
 \leq\sum_jN(F_j;\delta).
\]
\item If \(\eta_i\leq\delta_i\) for every \(i\), then
\begin{equation}\label{eq:basic-subdivision}
 N(F;\delta)\leq N(F;\eta)
 \leq 2^n\prod_{i=1}^n\frac{\delta_i}{\eta_i}\,N(F;\delta).
\end{equation}
\item If \(T(x)=Lx+v\), where \(L\in \operatorname{GL}(n,\mathbb{R})\), then
\begin{equation}\label{eq:affine-cover-comparison}
 C_T^{-1}N(F;\delta)\leq N(TF;\delta)\leq C_TN(F;\delta),
\end{equation}
where \(C_T\geq1\) depends only on \(n\) and \(L\).
In particular, both prints are invariant under invertible affine maps.
\end{enumerate}
\end{proposition}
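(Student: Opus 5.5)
Part (i) is immediate from the definitions. A cover of \(F\) by admissible boxes also covers any \(E\subseteq F\), so \(N(E;\delta)\leq N(F;\delta)\). The lower bound for the union then follows from \(F_j\subseteq\bigcup_k F_k\). For the upper bound, we concatenate optimal covers of the \(F_j\), which gives an admissible cover of the union with \(\sum_jN(F_j;\delta)\) boxes.

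For (ii), the left inequality holds because \(\mathcal R_n(\eta)\subseteq\mathcal R_n(\delta)\) whenever \(\eta_i\leq\delta_i\) for all \(i\). For the right inequality we subdivide each box of an optimal \(\delta\)-cover. Take a box \(B\) with ordered side lengths \(l_1\geq\cdots\geq l_n\), where \(l_i\leq\delta_i\), and let \(e_1,\ldots,e_n\) be its edge directions listed in that order. Cut the side along \(e_i\) into \(k_i=\lceil l_i/\eta_i\rceil\leq\lceil\delta_i/\eta_i\rceil\leq 2\delta_i/\eta_i\) equal pieces; the last estimate uses \(\delta_i/\eta_i\geq1\). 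Each sub-box has side \(l_i/k_i\leq\eta_i\) in direction \(e_i\).

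The one point needing care is that the ordered side lengths of a sub-box need not appear in the same order as the directions \(e_i\). This causes no trouble. If \(s_i\leq\eta_i\) for all \(i\), with \(\eta\) non-increasing, then the \(j\)-th largest of the \(s_i\) is at most \(\eta_j\). Indeed, at least \(n-j+1\) of the \(s_i\) have index \(i\geq j\), and each such \(s_i\leq\eta_i\leq\eta_j\). So every sub-box is admissible for \(\eta\). The pieces should be taken as open sub-boxes slightly enlarged, or as closures of a grid subdivision, so that together they still cover \(B\); the resulting slack is absorbed by replacing \(\lceil\cdot\rceil\) with a marginally larger count, still bounded by \(2\delta_i/\eta_i\). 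The number of sub-boxes is \(\prod_ik_i\leq2^n\prod_i\delta_i/\eta_i\), which proves \eqref{eq:basic-subdivision}.

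For (iii), translations preserve covering numbers, so we may assume \(T=L\). Applying the same argument to \(L^{-1}\) gives the reverse inequality, so it suffices to show \(N(LF;\delta)\leq C_LN(F;\delta)\). Take \(B\in\mathcal R_n(\delta)\) centred at \(c\) with orthonormal edge directions \(e_i\) and half-sides \(l_i/2\), and write its image as \(L(B)=Lc+\{\sum_it_iLe_i:|t_i|<l_i/2\}\), a parallelepiped. The key step is to enclose this parallelepiped in a bounded number of boxes of \(\mathcal R_n(\delta)\). We do this by applying Gram--Schmidt to \(Le_1,\ldots,Le_n\) in the order of decreasing \(l_i\), which yields orthonormal vectors \(f_1,\ldots,f_n\) with \(Le_i\in\operatorname{span}(f_1,\ldots,f_i)\). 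Then \(|\langle Le_i,f_j\rangle|\leq\|L\|\) and this pairing vanishes for \(j>i\), so the \(f_j\)-extent of \(L(B)\) is at most \(\sum_{i\geq j}\|L\|l_i\leq n\|L\|l_j\leq n\|L\|\delta_j\). Hence \(L(B)\) lies in a box with axes \(f_j\) and sides \(n\|L\|\delta_j\). Subdividing the \(j\)-th side into \(\lceil n\|L\|\rceil\) pieces, as in (ii), including the ordering argument there, covers \(L(B)\) by at most \(\lceil n\|L\|\rceil^n\) boxes in \(\mathcal R_n(\delta)\).

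This triangular enclosure is the main step. An isotropic enclosure would lose factors like \(\delta_1/\delta_n\), which are unbounded in eccentricity; the triangular structure is what keeps the constant uniform. The argument gives \(C_L=\max\bigl(\lceil n\|L\|\rceil^n,\lceil n\|L^{-1}\|\rceil^n\bigr)\). Finally, \eqref{eq:affine-cover-comparison} shows that \(N(TF;\delta)\delta^\alpha\) and \(N(F;\delta)\delta^\alpha\) are comparable uniformly in \(\delta\in\Delta_n\), so their lower and upper limits are positive simultaneously. Both prints are therefore affine invariant.
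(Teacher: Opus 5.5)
Your proof is correct. Parts (i) and (ii) follow the paper's argument. The one difference in (ii) is how you check that the sub-boxes are admissible. You use the majorisation remark that the \(j\)-th largest of the \(s_i\) is at most \(\eta_j\). The paper instead enlarges each sub-box to have side exactly \(\eta_i\) along the \(i\)-th edge direction of the original box, which makes the ordering automatic.

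The genuine difference is the enclosure step in (iii). The paper takes a singular value decomposition \(LQ\operatorname{diag}(s_i)=U\Sigma V^{\mathsf T}\). It uses \(\sigma_i\le\|L\|s_i\) and the fact that a rotated cube lies in a ball of radius \(\sqrt n/2\) to place \(L(B)\) in a \(U\)-oriented box with sides \(\sqrt n\,\sigma_i\). You instead take the Gram--Schmidt (QR) factorisation of \(LQ\), with columns ordered by decreasing side length, so that the \(f_j\)-extent of \(L(B)\) involves only the sides \(l_i\) with \(i\ge j\).

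Your route is more elementary. It needs only \(|\langle Le_i,f_j\rangle|\le\|L\|\) and the triangular structure. The paper's route relies on the minimax inequality \(\sigma_i(LA)\le\|L\|\sigma_i(A)\), which it uses without comment. The SVD route gives the slightly better factor \(\sqrt n\) in place of \(n\) and aligns the enclosing box with the principal axes of the image. Both arguments carry over to Lemma~\ref{lem:projective-box-enclosure-exact}, with \(L\) replaced by \(DT(x_B)\).

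One bookkeeping point concerns open boxes. Since \(\mathfrak B_n\) consists of open boxes, closures of grid cells are not admissible. Also, an open interval of length exactly \(k\delta_j\) needs \(k+1\) open pieces of length at most \(\delta_j\); for example, take \(n=1\), \(L=2\), \(B=(0,\delta_1)\). So the count in (iii) should be \((1+\lceil n\|L\|\rceil)^n\), as in the paper. This is the same adjustment you already made in (ii).
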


\begin{proof}
Restricting and combining covers proves \textup{(i)}. The first
inequality in \textup{(ii)} follows because every \(\eta\)-admissible
box is \(\delta\)-admissible. An edge of length
\(s_i\leq\delta_i\) can be covered by
\(\lfloor s_i/\eta_i\rfloor+1\leq2\delta_i/\eta_i\) open intervals
of length at most \(\eta_i\). Taking products in the frame of the
original box, and enlarging each product to have ordered sides
\(\eta_1,\ldots,\eta_n\), proves the second inequality.

For \textup{(iii)}, translation invariance reduces the proof to the
linear map \(L\). Write an admissible box as
\[
 B=x+Q\operatorname{diag}(s_i)(-1/2,1/2)^n,
\]
and take a singular value decomposition
\[
 LQ\operatorname{diag}(s_i)=U\Sigma V^{\mathsf T}.
\]
Let \(\sigma_1\geq\cdots\geq\sigma_n\) be the singular values.
The rotated unit cube lies in the ball of radius \(\sqrt n/2\), so
\(L(B)\) lies in a \(U\)-oriented box with ordered sides at most
\[
 \sqrt n\,\sigma_i\leq\sqrt n\,\|L\|s_i
 \leq\sqrt n\,\|L\|\delta_i.
\]
Overlapping subdivisions cover this box by at most
\[
 q_L=(1+\lceil\sqrt n\,\|L\|\rceil)^n
\]
members of \(\mathcal{R}_n(\delta)\). Hence
\(N(LF;\delta)\leq q_LN(F;\delta)\). Applying the same argument to
\(L^{-1}\) gives \eqref{eq:affine-cover-comparison}. Multiplying by \(\delta^\alpha\) and taking lower or upper limits
proves the asserted invariance.
\end{proof}

The ordering of the side lengths leads to an order on exponent
vectors defined by their tail sums. For \(\gamma\in\mathbb{R}^n\), set
\[
 S_k(\gamma)=\sum_{i=k}^n\gamma_i\qquad(1\leq k\leq n).
\]
We write \(\beta\preccurlyeq_{\mathrm{tail}}\alpha\) if
\(S_k(\beta)\leq S_k(\alpha)\) for every \(k\).

\begin{proposition}
\label{prop:set-monotonicity-unions}
\label{thm:unconditional-geometry}
For every \(F\subset\mathbb{R}^n\),
\begin{enumerate}[label=\textup{(\roman*)}]
\item \(0\in\underline{\mathrm{P}}(F)\).
\item Both prints are monotone under set inclusion and downward closed
in \([0,\infty)^n\) with respect to
\(\preccurlyeq_{\mathrm{tail}}\).
\item The lower print is convex, and for every finite family
\(F_1,\ldots,F_m\subset\mathbb{R}^n\),
\begin{equation}\label{eq:upper-finite-union}
 \overline{\mathrm{P}}\Bigl(\bigcup_{j=1}^mF_j\Bigr)=\bigcup_{j=1}^m\overline{\mathrm{P}}(F_j).
 \end{equation}
\end{enumerate}
\end{proposition}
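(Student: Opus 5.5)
Each part reduces to pointwise facts about \(X_\alpha(\delta)=N(F;\delta)\delta^\alpha\) together with Remark~\ref{re:1}\textup{(i)}.

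\emph{Part (i).} Since \(F\) is non-empty, \(N(F;\delta)\geq1\). Hence \(X_0(\delta)\geq1\) for every \(\delta\), so \(0\in\underline{\mathrm P}(F)\).

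\emph{Part (ii).} Monotonicity under inclusion is immediate from Proposition~\ref{prop:basic-covering-calculus}\textup{(i)}, since \(X_\alpha\) is monotone in the set. For downward closure, fix \(\beta\preccurlyeq_{\mathrm{tail}}\alpha\) with \(\beta\in[0,\infty)^n\). I would use summation by parts. Set \(u_k=\log(1/\delta_k)\), so that \(0<u_1\leq\cdots\leq u_n\), and put \(u_0=0\). Then
\[
 \log\frac{\delta^\beta}{\delta^\alpha}
 =\sum_{i=1}^n(\alpha_i-\beta_i)u_i
 =\sum_{k=1}^n\bigl(S_k(\alpha)-S_k(\beta)\bigr)(u_k-u_{k-1})\geq0,
\]
because every tail-sum difference and every increment \(u_k-u_{k-1}\) is non-negative. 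Hence \(\delta^\beta\geq\delta^\alpha\) on all of \(\Delta_n\), so \(X_\beta\geq X_\alpha\) pointwise, and membership of \(\alpha\) in either print passes to \(\beta\).

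\emph{Part (iii), convexity of the lower print.} Let \(\alpha,\beta\in\underline{\mathrm P}(F)\) and \(\lambda\in[0,1]\). By Remark~\ref{re:1}\textup{(i)} there are \(c,\eta>0\) with \(X_\alpha,X_\beta\geq c\) whenever \(\delta_1<\eta\). The key identity is
\[
X_{\lambda\alpha+(1-\lambda)\beta}(\delta)=X_\alpha(\delta)^\lambda X_\beta(\delta)^{1-\lambda}\geq c,
\]
which holds because \(\delta^{\lambda\alpha+(1-\lambda)\beta}=(\delta^\alpha)^\lambda(\delta^\beta)^{1-\lambda}\). This argument uses the same scales \(\delta\) for both exponents, which is why it works for the lower print. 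It fails for the upper print, where the good sequences for \(\alpha\) and \(\beta\) may differ.

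\emph{Part (iii), the union formula.} Write \(F=\bigcup_jF_j\). The inclusion \(\supseteq\) follows from monotonicity in (ii). For \(\subseteq\), take \(\alpha\in\overline{\mathrm P}(F)\) with \(c>0\) and a sequence \(\delta^{(k)}\) with \(\delta^{(k)}_1\to0\) and \(X_\alpha(F;\delta^{(k)})\geq c\). By the subadditivity in Proposition~\ref{prop:basic-covering-calculus}\textup{(i)},
\[
\sum_jX_\alpha(F_j;\delta^{(k)})\geq c,
\]
so for each \(k\) some index \(j_k\) satisfies \(X_\alpha(F_{j_k};\delta^{(k)})\geq c/m\). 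Since there are only \(m\) indices, the pigeonhole principle gives one \(j\) with \(j_k=j\) for infinitely many \(k\). Along that subsequence \(X_\alpha(F_j;\cdot)\geq c/m\), so \(\alpha\in\overline{\mathrm P}(F_j)\).

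There is no real obstacle in this proof. The only step needing care is the summation-by-parts sign check in (ii), which relies on the ordering \(\delta_1\geq\cdots\geq\delta_n\) inherent in \(\Delta_n\).
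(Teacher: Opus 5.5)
Your proof is correct and follows the paper's argument essentially step by step: the same bound $N(F;\delta)\geq1$ for (i), the same summation by parts for downward closure under $\preccurlyeq_{\mathrm{tail}}$, and the same geometric-mean identity for convexity of the lower print. The only difference is cosmetic. For the union formula you extract a good subsequence for a single $F_j$ by pigeonhole, whereas the paper argues contrapositively that if every $N(F_j;\delta)\delta^\alpha$ has upper limit zero, then so does their sum, which dominates the weighted covering number of the union.
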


\begin{proof}
Since \(N(F;\delta)\geq1\), the origin belongs to the lower print.
Monotonicity under set inclusion follows from
Proposition~\ref{prop:basic-covering-calculus}\textup{(i)}.
Write \(r_i=\log(1/\delta_i)\). As \(0<r_1\leq\cdots\leq r_n\),
summation by parts gives
\begin{equation}\label{eq:tail-summation-by-parts}
 \gamma\cdot r=r_1S_1(\gamma)
 +\sum_{k=2}^n(r_k-r_{k-1})S_k(\gamma).
\end{equation}
Consequently \(\beta\preccurlyeq_{\mathrm{tail}}\alpha\) implies
\(\delta^\beta\geq\delta^\alpha\), which proves downward closure.

For finite unions,
monotonicity gives one inclusion in \eqref{eq:upper-finite-union}.
If \(\alpha\notin\overline{\mathrm{P}}(F_j)\) for every \(j\), each non-negative weighted covering number
\(N(F_j;\delta)\delta^\alpha\) has upper limit zero. Their sum
bounds the weighted covering number of the union and also has upper
limit zero. This proves the reverse inclusion.

Finally, if \(\alpha,\beta\in\underline{\mathrm{P}}(F)\) and \(0<\theta<1\), then
\[
 N(F;\delta)\delta^{\theta\alpha+(1-\theta)\beta}
 =\bigl(N(F;\delta)\delta^\alpha\bigr)^\theta
  \bigl(N(F;\delta)\delta^\beta\bigr)^{1-\theta}.
\]
The two factors have positive lower bounds for all sufficiently small
\(\delta_1\). Their geometric mean has the same property, proving
convexity.
\end{proof}

Comparison with a cube gives bounds depending only on the ambient
dimension.

\begin{proposition}
\label{prop:ambient-cube-prints}
Every \(F\subset\mathbb{R}^n\) satisfies
\begin{align}
 \underline{\mathrm{P}}(F)&\subseteq \{\alpha\geq0:S_k(\alpha)\leq n-k+1
                      \text{ for }1\leq k\leq n\},
                      \nonumber\\
 \overline{\mathrm{P}}(F)&\subseteq \{\alpha\geq0:S_1(\alpha)\leq n\}
 \cup\bigcup_{k=2}^n\{\alpha\geq0:S_k(\alpha)<n-k+1\}.
 \nonumber
\end{align}
\end{proposition}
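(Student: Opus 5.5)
By monotonicity under inclusion (Proposition~\ref{prop:set-monotonicity-unions}(ii)), we may replace \(F\) by a closed cube \(Q\supseteq F\), say of side \(D\geq1\). It therefore suffices to bound \(N(Q;\delta)\) from above and to show that the weighted count tends to zero where claimed. The basic estimate comes from the subdivision argument of Proposition~\ref{prop:basic-covering-calculus}(ii). Cover \(Q\) by coordinate-aligned boxes with sides \(\delta_1,\ldots,\delta_n\), the \(i\)-th side placed along the \(i\)-th axis. This gives
\[
 N(Q;\delta)\leq\prod_{i=1}^n\Bigl(\frac{D}{\delta_i}+1\Bigr)\leq (2D)^n\delta^{-\mathbf 1},
 \qquad \mathbf 1=(1,\ldots,1).
\]
Hence
\[
 N(F;\delta)\delta^\alpha\lesssim\delta^{\alpha-\mathbf 1}
 =\exp\Bigl(-\sum_i(\alpha_i-1)r_i\Bigr),\qquad r_i=\log(1/\delta_i).
\]
By the summation-by-parts identity \eqref{eq:tail-summation-by-parts}, with \(\gamma=\alpha-\mathbf 1\) and \(S_k(\mathbf 1)=n-k+1\),
\[
 \log\bigl(\delta^{\alpha-\mathbf 1}\bigr)
 =-r_1\bigl(S_1(\alpha)-n\bigr)-\sum_{k=2}^n(r_k-r_{k-1})\bigl(S_k(\alpha)-(n-k+1)\bigr).
\]

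For the lower print, let \(\alpha\geq0\) violate some constraint, so \(S_k(\alpha)>n-k+1\) for some \(k\). I will exhibit scales with \(\delta_1\to0\) along which the weighted count tends to zero, which shows \(\underline{\mathcal B}^\alpha(F)=0\). Take \(\delta_1=\cdots=\delta_{k-1}=\varepsilon\) and \(\delta_k=\cdots=\delta_n=\varepsilon^m\) with \(m\to\infty\) suitably; for \(k=1\), take isotropic \(\delta=(\varepsilon,\ldots,\varepsilon)\). In the formula above, only the \(r_1\) term and the \(j=k\) increment are nonzero, since \(r_j-r_{j-1}=0\) for \(j\neq k\). With \(r_1=\log(1/\varepsilon)\) and \(r_k-r_{k-1}=(m-1)\log(1/\varepsilon)\), the exponent equals
\[
 -\log(1/\varepsilon)\Bigl[\bigl(S_1(\alpha)-n\bigr)+(m-1)\bigl(S_k(\alpha)-(n-k+1)\bigr)\Bigr].
\]
Fix \(m\) so large that the bracket is positive. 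Then this tends to \(-\infty\) as \(\varepsilon\to0\). The lower print condition fails along this single sequence, so the lower limit is zero.

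For the upper print, the increments \(r_k-r_{k-1}\) are arbitrary non-negative numbers, so we need uniform decay over all scales. Suppose \(\alpha\) lies outside the right-hand side, that is, \(S_1(\alpha)>n\) and \(S_k(\alpha)\geq n-k+1\) for \(2\leq k\leq n\). Then every increment term in the exponent is \(\leq0\), and the exponent is at most \(-r_1(S_1(\alpha)-n)\), which tends to \(-\infty\) as \(\delta_1\to0\) uniformly in the remaining coordinates. Therefore \(\limsup N(F;\delta)\delta^\alpha=0\) and \(\alpha\notin\overline{\mathrm P}(F)\). Taking complements gives the stated inclusion.

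The only delicate point is the constant \((2D)^n\) and the \(+1\) terms in the box count. These are harmless because \(\delta_i<1\) gives \(D/\delta_i+1\leq 2D/\delta_i\). The main conceptual step is choosing the two-level scales in the lower-print case. Note also that the set \(\{S_1(\alpha)\leq n\}\) is needed in the upper-print case, because when \(S_1(\alpha)=n\) with strict inequality in no \(S_k\) condition, the decay need not occur.
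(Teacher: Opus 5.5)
Your proof is correct and follows essentially the paper's route: bound the weighted count of a containing cube by the exponential of the summation-by-parts exponent, use two-level scales to exclude vectors from the lower print, and use the signs of the increments $r_k-r_{k-1}\geq0$ to exclude them from the upper print. The differences are minor. You use only the upper grid bound for a cube of side $D$, which avoids affine invariance and the volume lower bound that the paper needs because it computes the cube's prints exactly. You also take $r_k-r_{k-1}=(m-1)r_1$ with $m$ fixed and large, where the paper takes $u_k=r^2$.
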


\begin{proof}
By monotonicity and affine invariance, it suffices to compute the
prints of \(Q=[0,1]^n\). We claim that
\begin{align}
 \underline{\mathrm{P}}(Q)&=\{\alpha\geq0:S_k(\alpha)\leq n-k+1
                      \text{ for }1\leq k\leq n\},
                      \label{eq:cube-lower-print}\\
 \overline{\mathrm{P}}(Q)&=\{\alpha\geq0:S_1(\alpha)\leq n\}
 \cup\bigcup_{k=2}^n\{\alpha\geq0:S_k(\alpha)<n-k+1\}.
                      \label{eq:cube-upper-print}
\end{align}
Volume comparison and a rectangular grid cover give
\begin{equation}\label{eq:cube-covering-law}
 N(Q;\delta)\asymp_n\prod_{i=1}^n\delta_i^{-1}.
\end{equation}
Set
\[
 r_i=\log(1/\delta_i),\quad
 D_k=n-k+1-S_k(\alpha),\quad
 u_1=r_1,\quad u_k=r_k-r_{k-1}\ (k\geq2).
\]
Then \(u_1\to\infty\), \(u_k\geq0\) for \(k\geq2\), and
\[
 N(Q;\delta)\delta^\alpha
 \asymp_n\exp\left(\sum_{k=1}^nu_kD_k\right).
\]
The exponent is uniformly bounded below precisely when every \(D_k\geq0\).
If \(D_1<0\), take \(u_1=r\to\infty\) and the other \(u_k=0\).
If \(D_k<0\) for some \(k\geq2\), take \(u_1=r\), \(u_k=r^2\),
and the remaining increments zero. This proves \eqref{eq:cube-lower-print}.

A sequence with exponent bounded below exists if \(D_1\geq0\), by
isotropic scales, or if some \(D_k>0\) with \(k\geq2\), by taking \(u_1=r\), \(u_k=r^2\), and all other increments
equal to zero. Conversely, if \(D_1<0\) and all other \(D_k\leq0\),
the exponent is at most \(u_1D_1\to-\infty\).
This proves \eqref{eq:cube-upper-print}.

\end{proof}

The prints involve all anisotropic scales. The eccentricity profiles
record covering growth along fixed rays in the logarithmic scale
space.

\begin{definition}
For \(n\geq1\), let
\[
        \mathcal K_n
        =
        \{a=(a_1,\ldots,a_n)\in(0,\infty)^n:
        a_1\leq\cdots\leq a_n\}.
\]
Let
\[
        \mathcal{C}_n=\{a\in\mathcal K_n:a_1=1\}
\]
be the normalised section of \(\mathcal K_n\). We abbreviate
\(e^{-ta}=(e^{-ta_1},\ldots,e^{-ta_n})\).
For \(F\subset\mathbb{R}^n\), \(t>0\), and \(a\in\mathcal K_n\), define
\begin{equation}\label{eq:finite-scale-profile}
        \Phi_F(t,a)
        =
        \frac1t\log N\bigl(F;e^{-ta_1},\ldots,e^{-ta_n}\bigr).
\end{equation}
Set
\begin{equation}\label{eq:lower-upper-profiles}
        \underline{\rho}_F(a)
        =
        \liminf_{t\to\infty}\Phi_F(t,a),
        \qquad
        \overline{\rho}_F(a)
        =
        \limsup_{t\to\infty}\Phi_F(t,a).
\end{equation}
If these quantities agree, we say that the profile exists at \(a\) and denote
their common value by \(\rho_F(a)\). If this holds for every
\(a\in\mathcal K_n\), we say that the profile of \(F\) exists.
\end{definition}

The covering estimates for curves and spheres will satisfy the
following uniformity condition.

\begin{definition}\label{def:uniform-profile-regular}
A non-empty bounded set \(F\) is \emph{uniformly profile-regular} if
its profile exists and there are \(C\geq1\) and \(t_0>0\) such that
\begin{equation}\label{eq:uniform-profile-entropy}
 C^{-1}e^{t\rho_F(a)}\leq N(F;e^{-ta})\leq Ce^{t\rho_F(a)}
 \qquad(a\in\mathcal{C}_n,\ t\geq t_0).
\end{equation}
Equivalently, the same estimate holds for \(a\in\mathcal K_n\) and
\(t>0\) whenever \(ta_1\geq t_0\).
\end{definition}

Under this uniformity condition, positivity of the weighted covering
numbers reduces to inequalities involving the profile.

\begin{proposition}
\label{thm:variational-duality}
\label{thm:profile-variational}
If \(F\) is uniformly profile-regular, then
\begin{align}
 \underline{\mathrm{P}}(F)&=\bigcap_{a\in\mathcal K_n}
       \{\alpha\geq0:\alpha\cdot a\leq\rho_F(a)\},
       \label{eq:lower-print-duality}\\
 \overline{\mathrm{P}}(F)&=\left\{\alpha\geq0:
       \inf_{a\in\mathcal{C}_n}(\alpha\cdot a-\rho_F(a))\leq0\right\}.
       \label{eq:upper-print-duality}
\end{align}
\end{proposition}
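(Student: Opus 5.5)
The plan is to reparametrise every admissible scale as a point on a ray and then read off both prints from the two-sided estimate \eqref{eq:uniform-profile-entropy}. Given \(\delta\in\Delta_n\), put \(t=\log(1/\delta_1)\) and \(a_i=\log(1/\delta_i)/t\). Then \(a\in\mathcal C_n\), \(\delta=e^{-ta}\), and \(\delta_1\downarrow0\) is the same as \(t\to\infty\). Because the cube \((0,1)^n\) excludes \(\delta_i=1\), every ordered vector with \(a_1=1\) arises this way. Uniform profile-regularity then gives, for \(t\ge t_0\),
\[
 \log\bigl(N(F;\delta)\delta^\alpha\bigr)=t\bigl(\rho_F(a)-\alpha\cdot a\bigr)+O(1),
\]
with the \(O(1)\) bounded by \(\log C\) independently of \(a\). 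Both identities are therefore statements about the sign and size of \(g(a)=\rho_F(a)-\alpha\cdot a\) on \(\mathcal C_n\), taken together with the factor \(t\). I will also use the homogeneity \(\rho_F(\lambda a)=\lambda\rho_F(a)\), which holds directly from the definition by substituting \(t\mapsto\lambda t\). It lets me replace \(\mathcal K_n\) by \(\mathcal C_n\) in \eqref{eq:lower-print-duality}.

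For \eqref{eq:lower-print-duality}, suppose first that \(\alpha\cdot a\le\rho_F(a)\) for all \(a\in\mathcal C_n\). Then \(g\ge0\) everywhere, so \(N(F;\delta)\delta^\alpha\ge C^{-1}\) for all \(\delta_1<e^{-t_0}\). By Remark~\ref{re:1}(i), \(\alpha\in\underline{\mathrm P}(F)\). Conversely, if \(g(a)<0\) at some fixed \(a\), I move along that fixed ray: \(N(F;e^{-ta})e^{-t\alpha\cdot a}\le Ce^{tg(a)}\to0\), so the lower limit is \(0\). This direction is easy because a single ray suffices.

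For \eqref{eq:upper-print-duality}, suppose \(\inf_{\mathcal C_n}(\alpha\cdot a-\rho_F(a))\le0\). If the infimum is attained with value \(\le0\), one ray gives weighted counts \(\ge C^{-1}\), and hence \(\alpha\in\overline{\mathrm P}(F)\). The main obstacle is when the infimum equals \(0\) but is not attained, for instance approached along eccentricities tending to infinity. In that case I pick \(a^{(j)}\) with \(g(a^{(j)})>-1/j^2\) and then choose \(t_j=j\to\infty\). This gives \(t_jg(a^{(j)})\ge-1/j\), so the weighted counts stay \(\ge C^{-1}e^{-1}\) along \(\delta^{(j)}=e^{-t_ja^{(j)}}\), whose first coordinate tends to zero. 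The uniformity of \(C\) in \(a\) is exactly what makes this diagonal choice legitimate. If the infimum is negative, some ray has \(g>0\), and there the counts even blow up.

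For the converse, suppose the infimum is \(\varepsilon>0\). Then \(g\le-\varepsilon\) uniformly on \(\mathcal C_n\), so \(N(F;\delta)\delta^\alpha\le Ce^{-\varepsilon t}\to0\) uniformly as \(\delta_1\to0\). The upper limit is therefore \(0\), and \(\alpha\notin\overline{\mathrm P}(F)\).
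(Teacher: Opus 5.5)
Your proposal is correct and follows essentially the same route as the paper. The common steps are the reparametrisation \(\delta=e^{-ta}\) with \(a\in\mathcal C_n\), the two-sided uniform estimate, a single fixed ray for the converse of the lower-print identity, homogeneity to pass from \(\mathcal C_n\) to \(\mathcal K_n\), and a diagonal choice of near-optimal directions for the upper print. The paper simply picks \(a_t\) with \(\alpha\cdot a_t-\rho_F(a_t)<1/t\) for each \(t\geq t_0\), which handles every case of \(\inf\leq0\) at once, so your split into attained, non-attained and negative infima is harmless but unnecessary.
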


\begin{proof}
Fix \(\alpha\in[0,\infty)^n\). Every \(\delta\in\Delta_n\) has a
unique representation \(\delta=e^{-ta}\), with
\(t=\log(1/\delta_1)>0\) and \(a\in\mathcal{C}_n\).
For \(\Psi_\alpha(a)=\alpha\cdot a-\rho_F(a)\),
\eqref{eq:uniform-profile-entropy} gives
\begin{equation}\label{eq:weighted-profile-comparison}
 C^{-1}e^{-t\Psi_\alpha(a)}
 \leq N(F;e^{-ta})e^{-t\alpha\cdot a}
 \leq Ce^{-t\Psi_\alpha(a)}
 \qquad(a\in\mathcal{C}_n,\ t\geq t_0).
\end{equation}

Suppose first that \(\Psi_\alpha(a)\leq0\) for every
\(a\in\mathcal{C}_n\). Then
\(N(F;e^{-ta})e^{-t\alpha\cdot a}\geq C^{-1}\) for every
\(a\in\mathcal{C}_n\) and \(t\geq t_0\), so
\(\alpha\in\underline{\mathrm{P}}(F)\).
Conversely, if \(\Psi_\alpha(a_0)>0\) for some
\(a_0\in\mathcal{C}_n\), then
\[
 0\leq N(F;e^{-ta_0})e^{-t\alpha\cdot a_0}
 \leq Ce^{-t\Psi_\alpha(a_0)}
 \longrightarrow0
 \qquad(t\to\infty).
\]
It follows that
\(\alpha\notin\underline{\mathrm{P}}(F)\).  By positive homogeneity,
\[
 \Psi_\alpha(b)
 =b_1\Psi_\alpha\!\left(\frac{b}{b_1}\right),
 \qquad
 \frac{b}{b_1}\in\mathcal{C}_n
 \qquad(b\in\mathcal K_n).
\]
Hence the preceding condition is equivalent to
\(\alpha\cdot b\leq\rho_F(b)\) for every \(b\in\mathcal K_n\),
which proves \eqref{eq:lower-print-duality}.

For the upper print, set \(m=\inf_{a\in\mathcal{C}_n}\Psi_\alpha(a)\).
If \(m\leq0\), then for each \(t\geq t_0\) there exists
\(a_t\in\mathcal{C}_n\) such that \(\Psi_\alpha(a_t)<1/t\).
Consequently,
\[
 N(F;e^{-ta_t})e^{-t\alpha\cdot a_t}
 \geq C^{-1}e^{-t\Psi_\alpha(a_t)}>C^{-1}e^{-1},
 \qquad t\geq t_0.
\]
Since the largest side length is \(e^{-t}\), this implies
\(\alpha\in\overline{\mathrm{P}}(F)\).
If \(m>0\), then \(\Psi_\alpha(a)\geq m\) for every
\(a\in\mathcal{C}_n\), and therefore
\[
 0\leq
 \sup_{a\in\mathcal{C}_n}
 \left\{N(F;e^{-ta})e^{-t\alpha\cdot a}\right\}
 \leq Ce^{-tm}
 \longrightarrow0
 \qquad(t\to\infty).
\]
It follows that \(\alpha\notin\overline{\mathrm{P}}(F)\).
\end{proof}
\section{Duality for the lower print and projective invariance}
\label{sec:profiles-duality}

This section treats duality without uniform profile regularity and
invariance under projective transformations.

\subsection{Closure of the lower print and recovery of the profile}

We begin with stability estimates for covering growth under changes
in the logarithmic scale vector.

\begin{lemma}
\label{prop:finite-scale-profile-calculus}
Let \(F\subset\mathbb{R}^n\) be non-empty and bounded. For
\(a,c\in\mathcal K_n\) and \(t>0\),
\begin{equation}\label{eq:profile-finite-lipschitz}
 |\Phi_F(t,a)-\Phi_F(t,c)|
 \leq |a-c|_1+\frac{n\log2}{t}.
\end{equation}
The profiles \(\underline\rho_F\) and \(\overline\rho_F\) are finite,
coordinatewise non-decreasing, and positively homogeneous. They satisfy
\begin{equation}\label{eq:profile-limit-lipschitz}
 |\rho_*(a)-\rho_*(c)|\leq|a-c|_1
 \qquad(\rho_*\in\{\underline\rho_F,\overline\rho_F\})
\end{equation}
and therefore have unique continuous extensions to
\[
 \overline{\mathcal K}_n
 =\{b\in[0,\infty)^n:b_1\leq\cdots\leq b_n\}.
\]
\end{lemma}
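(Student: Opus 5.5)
The plan is to derive everything from the finite-scale estimate \eqref{eq:profile-finite-lipschitz}. That estimate comes from Proposition~\ref{prop:basic-covering-calculus}\textup{(ii)} applied through a common refinement of the two scale vectors. Fix \(a,c\in\mathcal K_n\) and \(t>0\), and set \(m_i=\max(a_i,c_i)\) and \(\eta_i=e^{-tm_i}\). The coordinatewise maximum of two non-decreasing sequences is non-decreasing, and \(tm_i>0\). Hence \(\eta\in\Delta_n\), and \(\eta_i\leq e^{-tc_i}\) and \(\eta_i\leq e^{-ta_i}\) for every \(i\). The first inequality in \eqref{eq:basic-subdivision}, applied with \(\delta=e^{-ta}\), and then the second, applied with \(\delta=e^{-tc}\), give
\[
 N(F;e^{-ta})\leq N(F;\eta)\leq 2^n\prod_{i=1}^n e^{t(m_i-c_i)}\,N(F;e^{-tc})
 \leq 2^n e^{t|a-c|_1}N(F;e^{-tc}),
\]
since \(0\leq m_i-c_i\leq|a_i-c_i|\). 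Taking logarithms, dividing by \(t\), and interchanging \(a\) and \(c\) yields \eqref{eq:profile-finite-lipschitz}.

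Next come the qualitative properties of the profiles. For finiteness, \(N(F;\delta)\geq1\) gives \(\Phi_F\geq0\). For an upper bound, \(F\) lies in a cube, so monotonicity and affine invariance (Proposition~\ref{prop:basic-covering-calculus}\textup{(i)},\textup{(iii)}) together with the covering law \eqref{eq:cube-covering-law} give \(N(F;\delta)\leq C_F\prod_i\delta_i^{-1}\). Therefore \(\Phi_F(t,a)\leq t^{-1}\log C_F+\sum_ia_i\), and both profiles lie in \([0,\sum_i a_i]\). Coordinatewise monotonicity follows from the first inequality in \eqref{eq:basic-subdivision}: if \(a\leq c\) coordinatewise, then \(e^{-tc}\leq e^{-ta}\) coordinatewise, so \(\Phi_F(t,a)\leq\Phi_F(t,c)\). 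The lower and upper limits preserve this inequality. Positive homogeneity follows from the identity \(\Phi_F(t,\lambda a)=\lambda\Phi_F(\lambda t,a)\) for \(\lambda>0\). As \(t\to\infty\), also \(\lambda t\to\infty\), so the liminf and limsup scale by \(\lambda\).

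For \eqref{eq:profile-limit-lipschitz}, write \(\Phi_F(t,a)\leq\Phi_F(t,c)+|a-c|_1+n\log2/t\). Taking \(\liminf_{t\to\infty}\), respectively \(\limsup_{t\to\infty}\), on both sides kills the \(n\log 2/t\) term. This gives \(\rho_*(a)\leq\rho_*(c)+|a-c|_1\), and symmetry gives the absolute-value bound. A \(1\)-Lipschitz function on \(\mathcal K_n\) with respect to \(|\cdot|_1\) is uniformly continuous, so it has a unique continuous extension to the closure \(\overline{\mathcal K}_n\) by the standard Cauchy-sequence argument. The extension is again \(1\)-Lipschitz. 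The only step that needs real care is the first one, where one must check that the common refinement \(\eta\) is admissible, that is, ordered and in \((0,1)^n\). The remaining steps are routine limit passages.
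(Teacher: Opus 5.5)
Your proof is correct and follows essentially the same route as the paper. Both derive the finite-scale estimate from the subdivision inequality via a common comparison scale vector. The paper passes through the coarser vector $\min\{a_i,c_i\}$, whereas you pass through the finer vector $\max\{a_i,c_i\}$. Both then obtain monotonicity, homogeneity via $\Phi_F(t,\lambda a)=\lambda\Phi_F(\lambda t,a)$, the limit Lipschitz bound and the continuous extension by the same limit passages. Your finiteness bound $\sum_i a_i$, obtained from the cube covering law, is slightly sharper than the paper's grid bound $na_n$ and serves the same purpose.
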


\begin{proof}
Write \(L_t(a)=\log N(F;e^{-ta})\). By \eqref{eq:basic-subdivision},
\begin{equation}\label{eq:profile-comparable}
 0\leq L_t(c)-L_t(a)
 \leq t\sum_i(c_i-a_i)+n\log2
 \qquad(a_i\leq c_i\text{ for all }i).
\end{equation}
For arbitrary \(a,c\in\mathcal K_n\), put
\(b_i=\min\{a_i,c_i\}\). Then \(b\in\mathcal K_n\), and if
\(L_t(a)\geq L_t(c)\),
\[
 0\leq L_t(a)-L_t(c)
 \leq L_t(a)-L_t(b)
 \leq t\sum_i(a_i-b_i)+n\log2
 \leq t|a-c|_1+n\log2.
\]
Interchanging \(a\) and \(c\) and dividing by \(t\) proves
\eqref{eq:profile-finite-lipschitz}. A grid of cubes of side
\(e^{-ta_n}\) covering a fixed cube containing \(F\) gives
\[
 1\leq N(F;e^{-ta})\leq C_F e^{tna_n},
 \qquad
 0\leq\underline\rho_F(a)\leq\overline\rho_F(a)\leq na_n.
\]
Taking lower and upper limits in
\eqref{eq:profile-finite-lipschitz} and \eqref{eq:profile-comparable}
proves \eqref{eq:profile-limit-lipschitz} and monotonicity. Moreover,
\[
 \Phi_F(t,\lambda a)=\lambda\Phi_F(\lambda t,a)
 \quad\Longrightarrow\quad
 \rho_*(\lambda a)=\lambda\rho_*(a)
 \qquad(\lambda>0).
\]
The continuous extensions follow from \eqref{eq:profile-limit-lipschitz}.
\end{proof}

The finite-scale estimate also controls sequences whose direction
varies with the scale.

\begin{lemma}\label{lem:moving-ray-lower}
Let \(F\subset\mathbb{R}^n\) be non-empty and bounded. Suppose that
\(r_j\to\infty\), \(b^{(j)}\in\mathcal K_n\), and
\(b^{(j)}\to b\in\overline{\mathcal K}_n\). Then
\begin{equation}\label{eq:moving-ray-lower}
 \liminf_j\Phi_F(r_j,b^{(j)})\geq\underline\rho_F(b),\qquad
 \limsup_j\Phi_F(r_j,b^{(j)})\leq\overline\rho_F(b).
\end{equation}
If the profile exists on \(\mathcal K_n\), then
\(\Phi_F(r_j,b^{(j)})\to\rho_F(b)\).
\end{lemma}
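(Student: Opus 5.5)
The plan is to compare the moving ray \(b^{(j)}\) with a fixed ray \(c\in\mathcal K_n\) close to \(b\), using the finite-scale Lipschitz estimate \eqref{eq:profile-finite-lipschitz} of Lemma~\ref{prop:finite-scale-profile-calculus}. Then we let \(c\to b\) and invoke the Lipschitz continuity \eqref{eq:profile-limit-lipschitz} of the extended profiles. There is one subtlety: \(b\) may lie on the boundary of \(\overline{\mathcal K}_n\) (for instance \(b_1=0\)), and there \(\underline\rho_F(b)\) is defined only through the continuous extension. So the comparison point \(c\) must be taken inside \(\mathcal K_n\).

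Fix \(\varepsilon>0\) and choose \(c\in\mathcal K_n\) with \(|c-b|_1<\varepsilon\). If \(b\in\mathcal K_n\), take \(c=b\). Otherwise take \(c_i=b_i+\varepsilon/(2n)\), which preserves the ordering and makes every coordinate positive. For large \(j\) we have \(|b^{(j)}-b|_1<\varepsilon\), so \(|b^{(j)}-c|_1<2\varepsilon\). Then \eqref{eq:profile-finite-lipschitz} gives
\[
 \bigl|\Phi_F(r_j,b^{(j)})-\Phi_F(r_j,c)\bigr|\leq 2\varepsilon+\frac{n\log2}{r_j}.
\]
Since \(r_j\to\infty\), we get \(\liminf_j\Phi_F(r_j,c)\geq\liminf_{t\to\infty}\Phi_F(t,c)=\underline\rho_F(c)\): a sequence along \(t=r_j\) can only raise the lower limit. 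Similarly \(\limsup_j\Phi_F(r_j,c)\leq\overline\rho_F(c)\). Hence
\[
 \liminf_j\Phi_F(r_j,b^{(j)})\geq\underline\rho_F(c)-2\varepsilon,
 \qquad
 \limsup_j\Phi_F(r_j,b^{(j)})\leq\overline\rho_F(c)+2\varepsilon.
\]

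Next we use \eqref{eq:profile-limit-lipschitz}, which persists for the continuous extensions to \(\overline{\mathcal K}_n\) by density. It gives \(|\rho_*(c)-\rho_*(b)|\leq\varepsilon\). Letting \(\varepsilon\downarrow0\) yields \eqref{eq:moving-ray-lower}. If the profile exists on \(\mathcal K_n\), then \(\underline\rho_F=\overline\rho_F\) on \(\mathcal K_n\), so their continuous extensions also coincide at \(b\). The two inequalities then force convergence to \(\rho_F(b)\).

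The only delicate step is the boundary case \(b\notin\mathcal K_n\). There \(\Phi_F(t,b)\) is not even defined, because a zero coordinate gives side length \(1\), outside \((0,1)\). This is why we route the argument through the interior point \(c\) and the extension, rather than comparing with \(b\) directly. The rest is routine.
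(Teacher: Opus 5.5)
Your proposal is correct and is essentially the paper's proof. The paper likewise compares with a fixed \(c\in\mathcal K_n\) via \eqref{eq:profile-finite-lipschitz}, takes lower and upper limits along \(r_j\), and lets \(c\to b\) using the Lipschitz continuity of the extended profiles; your explicit interior choice \(c_i=b_i+\varepsilon/(2n)\) just makes precise how the boundary case \(b\notin\mathcal K_n\) is handled.
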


\begin{proof}
For fixed \(c\in\mathcal K_n\), \eqref{eq:profile-finite-lipschitz} gives
\[
 \Phi_F(r_j,c)-|b^{(j)}-c|_1-\frac{n\log2}{r_j}
 \leq\Phi_F(r_j,b^{(j)})
 \leq\Phi_F(r_j,c)+|b^{(j)}-c|_1+\frac{n\log2}{r_j}.
\]
Hence
\[
\begin{aligned}
 \liminf_j\Phi_F(r_j,b^{(j)})&\geq\underline\rho_F(c)-|b-c|_1,\\
 \limsup_j\Phi_F(r_j,b^{(j)})&\leq\overline\rho_F(c)+|b-c|_1.
\end{aligned}
\]
Letting \(c\to b\) proves \eqref{eq:moving-ray-lower}. If the profiles
agree on \(\mathcal K_n\), their continuous extensions agree on
\(\overline{\mathcal K}_n\), so the two bounds coincide.
\end{proof}

This control permits passage from fixed rays to arbitrary
eccentricities in the lower-print problem.

\begin{theorem}\label{thm:lower-profile-closure}
Let \(F\subset\mathbb{R}^n\) be non-empty and bounded, and define
\begin{equation}\label{eq:lower-profile-polar}
 K_F=\bigcap_{a\in\mathcal K_n}
 \{\alpha\in[0,\infty)^n:\alpha\cdot a\leq\underline\rho_F(a)\}.
\end{equation}
Then
\begin{equation}\label{eq:unconditional-lower-closure}
 \underline{\mathrm{P}}(F)\subseteq K_F,\qquad
 \theta K_F\subseteq\underline{\mathrm{P}}(F)\quad(0\leq\theta<1).
\end{equation}
Consequently,
\begin{equation}\label{eq:lower-closure-duality}
 \overline{\underline{\mathrm{P}}(F)}=K_F.
\end{equation}
\end{theorem}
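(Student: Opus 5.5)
The plan is to prove the two inclusions in \eqref{eq:unconditional-lower-closure} separately, and then read off \eqref{eq:lower-closure-duality} from them.

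\emph{First inclusion.} Let \(\alpha\in\underline{\mathrm P}(F)\). By Remark~\ref{re:1}(i) there are \(c,\eta>0\) with \(N(F;\delta)\delta^\alpha\geq c\) whenever \(\delta_1<\eta\). Fix \(a\in\mathcal K_n\) and take \(\delta=e^{-ta}\) with \(t\) large. Then
\[
\log N(F;e^{-ta})-t\,\alpha\cdot a\geq\log c .
\]
Dividing by \(t\) and taking the lower limit gives \(\alpha\cdot a\leq\underline\rho_F(a)\). Hence \(\alpha\in K_F\).

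\emph{Second inclusion.} The case \(\theta=0\) is Proposition~\ref{prop:set-monotonicity-unions}(i), so take \(0<\theta<1\), \(\alpha\in K_F\), and \(\beta=\theta\alpha\). We argue by contradiction and suppose \(\beta\notin\underline{\mathrm P}(F)\). Then there are \(\delta^{(j)}\in\Delta_n\) with \(\delta^{(j)}_1\to0\) and \(N(F;\delta^{(j)})(\delta^{(j)})^\beta\to0\). Write \(\delta^{(j)}=e^{-r_jb^{(j)}}\) with \(r_j=\log(1/\delta^{(j)}_1)\to\infty\) and \(b^{(j)}\in\mathcal C_n\). The directions \(b^{(j)}\) may be unbounded, so we normalise in \(\ell^1\) instead. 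Set \(s_j=r_j|b^{(j)}|_1\) and \(c^{(j)}=b^{(j)}/|b^{(j)}|_1\). Then \(c^{(j)}\) lies in the compact set \(\{c\in\overline{\mathcal K}_n:|c|_1=1\}\), and \(s_j\geq r_j\to\infty\). Passing to a subsequence, \(c^{(j)}\to c\), and \(c\neq0\). The assumption reads
\[
\Phi_F(s_j,c^{(j)})-\theta\,\alpha\cdot c^{(j)}<0
\]
for large \(j\), after dividing \(\log N-s_j\beta\cdot c^{(j)}\) by \(s_j\).

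By Lemma~\ref{lem:moving-ray-lower}, the lower limit of \(\Phi_F(s_j,c^{(j)})\) is at least \(\underline\rho_F(c)\). This uses the continuous extension to \(\overline{\mathcal K}_n\), and \(c^{(j)}\in\mathcal K_n\) holds since each \(c^{(j)}\) has positive entries. So \(\underline\rho_F(c)\leq\theta\,\alpha\cdot c\).

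On the other hand, \(\alpha\in K_F\) gives \(\alpha\cdot a\leq\underline\rho_F(a)\) on \(\mathcal K_n\). By continuity of both sides this extends to \(\overline{\mathcal K}_n\), so \(\alpha\cdot c\leq\underline\rho_F(c)\leq\theta\,\alpha\cdot c\). This forces \(\alpha\cdot c\leq0\), hence \(\alpha\cdot c=0\) and \(\underline\rho_F(c)=0\).

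\emph{Main obstacle.} The degenerate case \(\alpha\cdot c=0\) leaves no room for a strict inequality, and I expect this to be the hardest step. The bound \(\underline\rho_F(c)\leq\theta\,\alpha\cdot c\) only came from a lower limit. To get a contradiction we must keep a margin at finite scale. The weighted count tends to zero, so
\[
\Phi_F(s_j,c^{(j)})<\theta\,\alpha\cdot c^{(j)} .
\]
Combined with the lower-limit bound \(\Phi_F(s_j,c^{(j)})\geq\underline\rho_F(c)-o(1)\), this gives \((1-\theta)\,\alpha\cdot c^{(j)}\leq\Phi_F(s_j,c^{(j)})\cdot\bigl(\text{correction}\bigr)\), which is too weak when \(\alpha\cdot c^{(j)}\to0\).

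The idea I would try first is to avoid needing positivity, using \(N\geq1\), so that \(\Phi_F\geq0\). If \(\alpha\cdot c=0\), then \(\alpha_i=0\) for every \(i\) with \(c_i>0\). Let \(k\) be the first index with \(c_k>0\); then \(\alpha_k=\cdots=\alpha_n=0\).

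The weighted count is
\[
N(F;\delta^{(j)})\exp\Bigl(-s_j\sum_{i<k}\theta\alpha_i c^{(j)}_i\Bigr),
\]
where \(s_jc^{(j)}_i=r_jb^{(j)}_i\) for \(i<k\). We then compare with the scale \(\delta'\) in which \(\delta_i\) is kept for \(i<k\) and the remaining sides are enlarged up to \(\delta_{k-1}\). Then \(N(F;\delta^{(j)})\geq N(F;\delta')\) by Proposition~\ref{prop:basic-covering-calculus}(ii), and \(\delta'\) has the same weight for \(\beta\). This reduces the problem to a sequence with bounded eccentricity in the first \(k-1\) coordinates. Iterating (an induction on the number of collapsed coordinates) reaches a limit direction with \(\alpha\cdot c>0\), or else \(\beta\) restricted to the relevant coordinates vanishes, so that the weight is 1 and the count is at least 1. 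Either way this contradicts the count tending to zero.

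\emph{Conclusion.} Finally, \(K_F\) is closed, being an intersection of closed half-spaces with \([0,\infty)^n\). The first inclusion gives \(\overline{\underline{\mathrm P}(F)}\subseteq K_F\). Since \(\theta\alpha\to\alpha\) as \(\theta\uparrow1\), the second inclusion gives \(K_F\subseteq\overline{\underline{\mathrm P}(F)}\), which yields \eqref{eq:lower-closure-duality}.
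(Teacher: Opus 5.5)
Your proposal is correct and follows essentially the paper's argument. The first inclusion and the closure step are the same. Your treatment of the degenerate case, enlarging the sides where $\alpha$ vanishes (which keeps the weight and can only lower the count), is exactly the paper's truncation $\widetilde a^{(j)}_i=\min\{a^{(j)}_i,a^{(j)}_m\}$.

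The paper performs this collapse once, at $m=\max\{i:\alpha_i>0\}$, before normalising by $a^{(j)}_m$. The limit direction then has $b_m=\cdots=b_n=1$, so $\alpha\cdot b\geq\alpha_m>0$ immediately and the degenerate case never arises. Your adaptive iteration also works. After collapsing at $k-1$, the $\ell^1$-renormalised limit satisfies $c'_{k-1}=\cdots=c'_n\geq 1/n$, so the first positive index strictly decreases. That fact, which you should state, is what drives the induction, not ``bounded eccentricity in the first $k-1$ coordinates'' as you wrote.
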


\begin{proof}
Let \(\alpha\in\underline{\mathrm{P}}(F)\). For each \(a\in\mathcal K_n\), there are
\(c,T>0\) such that
\[
 N(F;e^{-ta})e^{-t\alpha\cdot a}\geq c\quad(t\geq T).
\]
It follows that
\[
 \underline\rho_F(a)-\alpha\cdot a
 =\liminf_{t\to\infty}\frac1t
       \log\bigl(N(F;e^{-ta})e^{-t\alpha\cdot a}\bigr)\geq0,
\]
which proves \(\underline{\mathrm{P}}(F)\subseteq K_F\).

Since \(0\in\underline{\mathrm{P}}(F)\), it remains to consider
\(0<\theta<1\) and \(\alpha\in K_F\setminus\{0\}\). Set
\[
 m=\max\{i:\alpha_i>0\}.
\]
Suppose that \(\theta\alpha\notin\underline{\mathrm{P}}(F)\). There are
\(t_j\to\infty\) and \(a^{(j)}\in\mathcal{C}_n\) such that
\begin{equation}\label{eq:radial-contraction-bad-scales}
 X_j=N(F;e^{-t_ja^{(j)}})
       e^{-t_j\theta\alpha\cdot a^{(j)}}\longrightarrow0.
\end{equation}
Define
\[
 \widetilde a_i^{(j)}=\min\{a_i^{(j)},a_m^{(j)}\},\qquad
 r_j=t_ja_m^{(j)},\qquad
 b^{(j)}=\frac{\widetilde a^{(j)}}{a_m^{(j)}}.
\]
Then
\[
 \widetilde a_i^{(j)}\leq a_i^{(j)},\qquad
 \alpha\cdot\widetilde a^{(j)}=\alpha\cdot a^{(j)},\qquad
 r_j\geq t_j\to\infty.
\]
Monotonicity of the covering number gives
\[
 0<Y_j:=N(F;e^{-r_jb^{(j)}})
       e^{-r_j\theta\alpha\cdot b^{(j)}}\leq X_j\longrightarrow0.
\]
Since
\[
 0<b_1^{(j)}\leq\cdots\leq b_m^{(j)}=\cdots=b_n^{(j)}=1,
\]
we may pass to a subsequence on which
\(b^{(j)}\to b\in\overline{\mathcal K}_n\), with
\(b_m=\cdots=b_n=1\). By continuity and the definition of \(K_F\),
\[
 \underline\rho_F(b)\geq\alpha\cdot b\geq\alpha_m>0.
\]
Lemma~\ref{lem:moving-ray-lower} now yields
\[
 \liminf_j\frac{\log Y_j}{r_j}
 \geq\underline\rho_F(b)-\theta\alpha\cdot b
 \geq(1-\theta)\alpha_m>0.
\]
This contradicts
\(\limsup_j r_j^{-1}\log Y_j\leq0\), which follows from
\(Y_j\to0\). Thus \(\theta K_F\subseteq\underline{\mathrm{P}}(F)\).
The set \(K_F\) is closed, so
\(\overline{\underline{\mathrm{P}}(F)}\subseteq K_F\); the reverse inclusion follows by
letting \(\theta\uparrow1\) in \(\theta\alpha\in\underline{\mathrm{P}}(F)\).
\end{proof}

The closure in \eqref{eq:lower-closure-duality} cannot be omitted.
Fix \(d\in(0,1)\), and choose \(A\subset\mathbb{N}\) whose counting function
satisfies \(A(m)=dm-\sqrt m+O(1)\). Such a set exists because the
increments of \(\lfloor dm-\sqrt m\rfloor\) belong to \(\{0,1\}\)
for all sufficiently large \(m\). Define
\[
 E_A=\left\{\sum_{k=1}^\infty2\xi_k3^{-k}:
 \xi_k\in\{0,1\}\ (k\in A),\quad\xi_k=0\ (k\notin A)\right\},
 \qquad s_0=\frac{d\log2}{\log3}.
\]
Separation of the ternary cylinders gives
\[
 N(E_A;3^{-m})3^{-ms}
 \asymp\exp\bigl(m(s_0-s)\log3-\sqrt m\log2+O(1)\bigr)
 \longrightarrow
 \begin{cases}
 \infty,&0\leq s<s_0,\\
 0,&s\geq s_0.
 \end{cases}
\]
Comparison with adjacent ternary scales yields
\(\underline{\mathrm{P}}(E_A)=[0,s_0)\), whereas \(K_{E_A}=[0,s_0]\).

The next planar example has distinct lower and upper profiles on every
ray, so Proposition~\ref{thm:profile-variational} does not apply.

\begin{example}\label{ex:oscillating-planar-product}
Put \(M_k=2^{2^k}\) for \(k\geq0\), and let
\[
 A=2\mathbb N\cup\bigcup_{k\geq0}
       \bigl((M_{2k},M_{2k+1}]\cap\mathbb N\bigr),
 \qquad A(m)=\#(A\cap\{1,\ldots,m\}).
\]
With \(E_A\) defined as above, set \(F=E_A\times[0,1]\) and
\(s=\log2/(2\log3)\). Then, for \(a\in\mathcal K_2\),
\[
 \underline\rho_F(a)=a_1+sa_2,
 \qquad \overline\rho_F(a)=a_1+2sa_2,
\]
and
\begin{equation}\label{eq:oscillating-planar-closure}
 \overline{\underline{\mathrm P}(F)}
 =\{\alpha\in[0,\infty)^2:
       \alpha_2\leq s,\quad \alpha_1+\alpha_2\leq1+s\}.
\end{equation}
\end{example}

\begin{proof}
The definition gives
\[
 \liminf_{m\to\infty}\frac{A(m)}m=\frac12,
 \qquad
 \limsup_{m\to\infty}\frac{A(m)}m=1,
\]
with the limits attained along \(M_{2k}\) and \(M_{2k+1}\),
respectively. We claim that, uniformly for
\(0<\delta_2\leq\delta_1<1\),
\begin{equation}\label{eq:oscillating-planar-cover}
 N(F;\delta_1,\delta_2)
 \asymp\delta_1^{-1}2^{A(m_2)},
 \qquad m_i=\lfloor\log_3(1/\delta_i)\rfloor.
\end{equation}
A product cover by ternary cylinders and intervals gives the upper
bound. For the lower bound, let \(\nu\) be the probability measure
on \(E_A\) assigning mass \(2^{-A(m)}\) to each level-\(m\)
cylinder, and let \(\lambda\) be Lebesgue measure on \([0,1]\).
Cylinder separation gives
\(\nu(I)\lesssim2^{-A(m_i)}\) for every interval \(I\) of length
at most \(2\delta_i\).

Let \(B\) be an admissible rectangle, and let \(v=(v_1,v_2)\)
be a unit vector parallel to its shorter side. If
\(|v_1|\geq2^{-1/2}\), its horizontal sections have length at most
\(\sqrt2\delta_2\), and its vertical projection has length at most
\(\sqrt2\delta_1\). Hence
\((\nu\times\lambda)(B)\lesssim\delta_1 2^{-A(m_2)}\).
If \(|v_2|\geq2^{-1/2}\), vertical sections instead give
\[
 (\nu\times\lambda)(B)
 \lesssim\delta_2 2^{-A(m_1)}
 \lesssim\delta_1 2^{-A(m_2)},
\]
since \(A(m_2)-A(m_1)\leq m_2-m_1\) and
\(\delta_2/\delta_1\leq3^{1+m_1-m_2}\).
Thus every admissible rectangle has mass at most
\(C\delta_1 2^{-A(m_2)}\), proving
\eqref{eq:oscillating-planar-cover}.

Substituting \(\delta_i=e^{-ta_i}\) proves the profile formulae.
Theorem~\ref{thm:lower-profile-closure} identifies the closed lower
print with the nonnegative vectors satisfying
\(\alpha_1+\alpha_2r\leq1+sr\) for every \(r\geq1\).
These inequalities are equivalent to those in
\eqref{eq:oscillating-planar-closure}.
\end{proof}

We next characterise when the lower profile can be recovered as the
support function of the closed lower print.

\begin{corollary}
\label{thm:reverse-profile-duality-general}
Let \(F\subset\mathbb{R}^n\) be non-empty and bounded. For every
\(a\in\mathcal K_n\),
\[
 h_{K_F}(a):=\max_{\alpha\in K_F}\alpha\cdot a
 =\sup_{\alpha\in\underline{\mathrm{P}}(F)}\alpha\cdot a
 \leq\underline\rho_F(a).
\]
Equality with \(\underline\rho_F\) holds throughout \(\mathcal K_n\)
if and only if \(\underline\rho_F\) is subadditive. In this case,
\begin{equation}\label{eq:reverse-profile-duality-general}
 \underline\rho_F(a)=\sup_{\alpha\in\underline{\mathrm{P}}(F)}\alpha\cdot a.
\end{equation}
\end{corollary}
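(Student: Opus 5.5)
The proof has three parts: the chain of (in)equalities, the easy direction of the equivalence, and the hard direction, which needs a separation argument.

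\emph{The chain.} By Theorem~\ref{thm:lower-profile-closure}, \(K_F=\overline{\underline{\mathrm P}(F)}\). It is closed, and it is bounded because \(\alpha\cdot a\leq\underline\rho_F(a)\) with \(a=(1,\ldots,1)\) bounds \(S_1(\alpha)\). So the maximum defining \(h_{K_F}\) is attained. A linear functional has the same supremum over a set and over its closure, which gives the middle equality. The final inequality \(h_{K_F}(a)\leq\underline\rho_F(a)\) is just the definition of \(K_F\) as an intersection of half-spaces.

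\emph{Equality implies subadditivity.} If \(\underline\rho_F=h_{K_F}\) on \(\mathcal K_n\), then \(\underline\rho_F\) is a support function, i.e.\ a supremum of linear functionals. It is therefore subadditive: \(h(a+c)\leq h(a)+h(c)\).

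\emph{Subadditivity implies equality.} Assume \(\underline\rho_F\) is subadditive. By Lemma~\ref{prop:finite-scale-profile-calculus} it is also positively homogeneous, hence convex and sublinear on the convex cone \(\mathcal K_n\). It is also Lipschitz and coordinatewise non-decreasing, and it extends continuously to \(\overline{\mathcal K}_n\).

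Fix \(a_0\in\mathcal K_n\). We need some \(\alpha\in K_F\) with \(\alpha\cdot a_0=\underline\rho_F(a_0)\). The plan is Hahn--Banach: find a linear functional \(\ell\) with \(\ell\leq\underline\rho_F\) on \(\mathcal K_n\) and \(\ell(a_0)=\underline\rho_F(a_0)\), and then arrange that its coefficient vector is nonnegative.

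The nonnegativity is the main obstacle. Being dominated by \(\underline\rho_F\) on the cone \(\mathcal K_n\) only determines \(\ell\) modulo the dual cone. Monotonicity suggests extending \(\underline\rho_F\) to all of \(\mathbb R^n\) by
\[
 p(x)=\underline\rho_F\bigl(\pi(x_+)\bigr),
\]
where \(\pi\) is a suitable order-preserving map into \(\overline{\mathcal K}_n\). The natural candidate is
\[
 \pi(y)_i=\max_{j\leq i}y_j,
\]
the smallest element of \(\overline{\mathcal K}_n\) dominating \(y\) coordinatewise. One then checks that \(p\) is sublinear. Positive homogeneity is clear. For subadditivity, \(\pi((x+y)_+)\leq\pi(x_+)+\pi(y_+)\) coordinatewise, so monotonicity plus subadditivity of \(\underline\rho_F\) give \(p(x+y)\leq p(x)+p(y)\).

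Hahn--Banach on \(\mathbb R^n\) then yields a linear \(\ell\leq p\) with \(\ell(a_0)=p(a_0)=\underline\rho_F(a_0)\). For \(x\leq0\) we have \(p(x)=\underline\rho_F(0)=0\), so \(\ell\leq0\) on the negative orthant. Hence the coefficient vector \(\alpha\) of \(\ell\) is nonnegative. Since \(p=\underline\rho_F\) on \(\mathcal K_n\), we get \(\alpha\in K_F\) with \(\alpha\cdot a_0=\underline\rho_F(a_0)\). This proves \(h_{K_F}=\underline\rho_F\) on \(\mathcal K_n\), and \eqref{eq:reverse-profile-duality-general} follows from the middle equality of the chain.

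In the final write-up I would check carefully that \(\pi\) is monotone and subadditive in the sense used above. If it is not, I would instead separate the point \((a_0,\underline\rho_F(a_0))\) from the epigraph of \(\underline\rho_F\) over \(\overline{\mathcal K}_n\) by a supporting hyperplane. I would then use monotonicity to replace a possibly signed normal vector by its nonnegative part, via the tail-sum ordering of Proposition~\ref{prop:set-monotonicity-unions}\textup{(ii)}.
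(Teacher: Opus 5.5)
Your proof is correct. For the hard direction you take a different route from the paper.

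\textbf{The paper's route.} The paper works only at interior points $u$ of $\mathcal K_n$. It takes a subgradient $p$ of the convex function $\underline\rho_F$ at $u$. The rays $v=\lambda u$ give $p\cdot u=\underline\rho_F(u)$ and $p\cdot v\leq\underline\rho_F(v)$ on $\mathcal K_n$. Testing $u-he_i$ against monotonicity gives $p_i\geq0$. Equality then passes to the remaining points of $\mathcal K_n$ by continuity of $h_{K_F}$ and $\underline\rho_F$.

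\textbf{Your route.} You build the global sublinear majorant $p(x)=\underline\rho_F(\pi(x_+))$ on $\mathbb R^n$. Hahn--Banach, equivalently the existence of a subgradient of a finite convex function on $\mathbb R^n$, then applies at every $a_0\in\mathcal K_n$, including points with $a_i=a_{i+1}$. Nonnegativity of $\alpha$ comes from $p=0$ on the negative orthant, not from a perturbation argument. No density or continuity step is needed.

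\textbf{What you must verify.} The price is checking that $p$ is sublinear, and this does go through.
\begin{itemize}
\item $(x+y)_+\leq x_++y_+$ coordinatewise.
\item The running maximum $\pi$ is monotone and subadditive.
\item The continuous extension of $\underline\rho_F$ to $\overline{\mathcal K}_n$ is monotone and subadditive, and vanishes at $0$.
\end{itemize}
For the last item, approximate $b\in\overline{\mathcal K}_n$ by $b+\varepsilon(1,\ldots,1)\in\mathcal K_n$, and use $0\leq\underline\rho_F(a)\leq na_n$ from Lemma~\ref{prop:finite-scale-profile-calculus}. State these extension facts explicitly, because $\pi(x_+)$ generally has zero coordinates and so lies outside $\mathcal K_n$.

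\textbf{The fallback.} Drop the fallback in your last paragraph. It is unnecessary, and replacing a signed normal by its positive part can only increase $\alpha\cdot a$ for $a\geq0$. So it would not obviously preserve $\alpha\cdot a\leq\underline\rho_F(a)$.

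\textbf{The chain.} Bounding $S_1(\alpha)$ via $a=(1,\ldots,1)$ is an equivalent substitute for the paper's appeal to Proposition~\ref{prop:ambient-cube-prints}.
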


\begin{proof}
Theorem~\ref{thm:lower-profile-closure} and
Proposition~\ref{prop:ambient-cube-prints} imply that
\[
 0\in K_F=\overline{\underline{\mathrm{P}}(F)}
 \subseteq\{\alpha\geq0:S_1(\alpha)\leq n\}.
\]
Thus \(K_F\) is compact, and
\[
 \max_{\alpha\in K_F}\alpha\cdot a
 =\sup_{\alpha\in\underline{\mathrm{P}}(F)}\alpha\cdot a
 \leq\underline\rho_F(a).
\]
Suppose that \(\underline\rho_F\) is subadditive. By
Lemma~\ref{prop:finite-scale-profile-calculus}, it is finite, continuous,
and positively homogeneous, and hence convex. Fix
\(u\in\operatorname{int}(\mathcal K_n)\), and choose a supporting
vector \(p\in\mathbb{R}^n\) such that
\[
 \underline\rho_F(v)\geq\underline\rho_F(u)+p\cdot(v-u)
 \qquad(v\in\mathcal K_n).
\]
Taking \(v=\lambda u\), with \(\lambda>1\) and with
\(0<\lambda<1\), gives
\[
 p\cdot u=\underline\rho_F(u),\qquad
 p\cdot v\leq\underline\rho_F(v)\quad(v\in\mathcal K_n).
\]
For each \(i\), choose \(h>0\) sufficiently small that
\(u-he_i\in\operatorname{int}(\mathcal K_n)\). Monotonicity gives
\[
 \underline\rho_F(u)-hp_i
 \leq\underline\rho_F(u-he_i)\leq\underline\rho_F(u).
\]
Thus \(p_i\geq0\) for every \(i\), so \(p\in K_F\) and
\(h_{K_F}(u)\geq p\cdot u=\underline\rho_F(u)\).
Continuity extends equality to all of \(\mathcal K_n\).
Conversely, the support function satisfies
\[
 h_{K_F}(a+b)
 \leq h_{K_F}(a)+h_{K_F}(b)
 \qquad(a,b\in\mathcal K_n),
\]
so equality with \(h_{K_F}\) implies subadditivity.
\end{proof}

\subsection{Projective invariance and geometric interpretation}
\label{subsec:geometric-meaning-invariance}

The covering comparison in
Proposition~\ref{prop:affine-invariance-prints} extends from invertible
affine maps to nonsingular projective transformations on compact sets
disjoint from the pole hyperplane.

\begin{theorem}
\label{thm:projective-invariance-exact-prints}
Let \(n\geq1\), let
\[
 M=\begin{pmatrix}A&b\\c^{\mathsf T}&d\end{pmatrix}
 \in \operatorname{GL}(n+1,\mathbb{R}),
 \qquad q(x)=c^{\mathsf T}x+d,
\]
and let \(U\subset\mathbb{R}^n\) be a non-empty open set on which \(q\) does
not vanish. Define
\[
 T(x)=\frac{Ax+b}{q(x)},\qquad V=T(U).
\]
The inverse map \(T^{-1}:V\to U\) is the projective transformation
induced by \(M^{-1}\). For every non-empty compact \(K\subset U\),
there is a constant \(C\geq1\), depending only on \(n,M,K\), such that
\begin{equation}\label{eq:projective-cover-comparison-exact}
 C^{-1}N(F;\delta)\leq N(T(F);\delta)\leq C N(F;\delta)
 \qquad(\delta\in\Delta_n)
\end{equation}
for every non-empty \(F\subseteq K\). Consequently,
\begin{equation}\label{eq:projective-print-equalities-exact}
 \underline{\mathrm{P}}(T(F))=\underline{\mathrm{P}}(F),\qquad
 \overline{\mathrm{P}}(T(F))=\overline{\mathrm{P}}(F).
\end{equation}
\end{theorem}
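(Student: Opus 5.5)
The approach is to show that a nonsingular projective map, restricted to small boxes near \(K\), acts up to bounded distortion like its derivative at the box centre, and then to reuse the covering argument of Proposition~\ref{prop:affine-invariance-prints}\textup{(iii)}. Naive linearisation fails: a box with \(\delta_1\gg\delta_n\) has second-order Taylor error of order \(\delta_1^2\), which can far exceed \(\delta_n\). I will avoid this with an exact identity for projective maps.

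\emph{Step 1 (exact formula).} For \(x_0\in U\) and \(h\) with \(q(x_0+h)\neq0\), put \(q_0=q(x_0)\). A direct computation gives
\[
 T(x_0+h)-T(x_0)=\frac{L_{x_0}h}{1+c^{\mathsf T}h/q_0},\qquad
 L_{x_0}=\frac{q_0A-(Ax_0+b)c^{\mathsf T}}{q_0^2}=DT(x_0).
\]
Thus the image of a small neighbourhood is the image under the linear map \(L_{x_0}\), followed by a radial rescaling about \(T(x_0)\) by the scalar factor \((1+c^{\mathsf T}h/q_0)^{-1}\).

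\emph{Step 2 (uniform constants).} Choose \(\varepsilon>0\) so that the closed \(\varepsilon\)-neighbourhood \(K_\varepsilon\) of \(K\) is a compact subset of \(U\), \(|q|\geq q_*>0\) on \(K_\varepsilon\), and \(|c|\,\varepsilon\leq q_*/2\). Then for \(x_0\in K_\varepsilon\) and \(|h|\leq\varepsilon\), the rescaling factor lies in \([2/3,2]\). Moreover \(\|L_{x_0}\|\leq\Lambda\) uniformly on \(K_\varepsilon\), with \(\Lambda\) depending only on \(M\) and \(K\).

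\emph{Step 3 (image of one box).} Let \(B\) be a box with centre \(x_0\in K_{\varepsilon/2}\) and \(\operatorname{diam}B\leq\varepsilon/2\). The set \(L_{x_0}(B-x_0)\) is convex and symmetric about \(0\). Every point of \(T(B)-T(x_0)\) has the form \(sy\) with \(y\in L_{x_0}(B-x_0)\) and \(0<s\leq2\). By convexity and the presence of \(0\) in this set, \(sy\in 2L_{x_0}(B-x_0)\), so
\[
 T(B)\subseteq T(x_0)+2L_{x_0}(B-x_0).
\]
The singular-value argument of Proposition~\ref{prop:affine-invariance-prints}\textup{(iii)} places the right-hand side in a box with ordered sides at most \(2\sqrt n\,\Lambda\delta_i\). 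That box is covered by at most \((1+\lceil2\sqrt n\Lambda\rceil)^n\) members of \(\mathcal R_n(\delta)\).

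\emph{Step 4 (reduction to small boxes).} Take an optimal cover of \(F\) by \(N(F;\delta)\) admissible boxes, discarding those that miss \(F\). Subdivide each box in its own frame, splitting every edge longer than \(\varepsilon/(2\sqrt n)\) into pieces of length at most \(\varepsilon/(2\sqrt n)\). Keep only the pieces meeting \(F\). Since \(\delta_1<1\), this costs a factor of at most \((1+\lceil 2\sqrt n/\varepsilon\rceil)^n\), and each surviving piece is still \(\delta\)-admissible. Each surviving piece has diameter at most \(\varepsilon/2\) and meets \(F\subseteq K\), so its centre lies in \(K_{\varepsilon/2}\) and the piece lies in \(K_\varepsilon\). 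Step~3 then gives
\[
 N(T(F);\delta)\leq C\,N(F;\delta).
\]

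\emph{Step 5 (symmetry and prints).} The map \(T^{-1}\) is the projective transformation induced by \(M^{-1}\), and \(T(K)\) is a compact subset of \(V\) on which the corresponding denominator does not vanish. Applying Steps~1--4 to \(T^{-1}\), \(T(K)\) and \(T(F)\) gives the reverse inequality in \eqref{eq:projective-cover-comparison-exact}. Multiplying by \(\delta^\alpha\) and taking lower and upper limits then yields \eqref{eq:projective-print-equalities-exact}.

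The main obstacle is Step~3: controlling the image of a highly eccentric box at every side length. The radial-rescaling identity of Step~1 is what handles it, since it replaces any curvature estimate by the observation that a convex set containing the origin is mapped into itself under scaling by factors at most \(1\) (here, after absorbing the bound \(s\leq 2\)).
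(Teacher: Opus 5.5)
Your proposal is correct and follows essentially the same route as the paper: the exact increment identity \(T(y)-T(x)=\frac{q(x)}{q(y)}DT(x)(y-x)\) (your Step~1), the singular-value enclosure of Proposition~\ref{prop:affine-invariance-prints}\textup{(iii)}, and the same argument applied to \(T^{-1}\) on \(T(K)\). The paper encloses only \(T(B\cap K)\), taking the base point \(x_B\in B\cap K\) and using the bound \(|q(x_B)/q(y)|\leq R_K=\max_K|q|/\min_K|q|\) on a symmetric box, so it does not need your neighbourhood \(K_\varepsilon\) or the subdivision in Step~4. You should also include the one-line check \(M^{-1}(T(x),1)^{\mathsf T}=q(x)^{-1}(x,1)^{\mathsf T}\), which shows both that \(M^{-1}\) induces \(T^{-1}\) and that its denominator is nonzero on \(T(K)\).
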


\begin{lemma}
\label{lem:projective-box-enclosure-exact}
With \(T,q,K\) as in Theorem~\ref{thm:projective-invariance-exact-prints},
set
\[
 R_K=\frac{\max_{x\in K}|q(x)|}{\min_{x\in K}|q(x)|},
 \qquad L_K=\max_{x\in K}\|DT(x)\|,
 \qquad D_K=2\sqrt n\,R_KL_K.
\]
For every \(B\in\mathcal{R}_n(\delta)\), the set \(T(B\cap K)\) can be
covered by at most
\[
 Q_{T,K}=(1+\lceil D_K\rceil)^n
\]
members of \(\mathcal{R}_n(\delta)\).
\end{lemma}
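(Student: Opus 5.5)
The plan is to exploit an exact "linearisation identity" for projective maps. On the set where \(q\neq0\), a direct computation gives, for all \(x,y\),
\[
 q(y)\bigl(T(x)-T(y)\bigr)=\bigl(A-T(x)c^{\mathsf T}\bigr)(x-y).
\]
To see this, write \(q(y)=q(x)-c^{\mathsf T}(x-y)\) and use \(q(x)T(x)=Ax+b\).

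Differentiating the formula for \(T\) gives
\[
 DT(x)=\frac{A-T(x)c^{\mathsf T}}{q(x)} .
\]
Combining the two, we obtain
\[
 T(y)=T(x)+\frac{q(x)}{q(y)}\,DT(x)(y-x)\qquad(x,y\in U).
\]
The point is that the Jacobian is evaluated at the \emph{fixed} point \(x\). The nonlinearity of \(T\) is thereby reduced to a scalar factor \(q(x)/q(y)\). For \(x,y\in K\) this factor has modulus at most \(R_K\), although its sign may vary if \(K\) meets both sides of the pole hyperplane.

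Now fix \(B\in\mathcal R_n(\delta)\). If \(B\cap K=\varnothing\) there is nothing to prove, so choose \(x_0\in B\cap K\) and put \(L=DT(x_0)\), so that \(\|L\|\leq L_K\). Write \(B=z+Q\operatorname{diag}(s_i)(-1/2,1/2)^n\) with \(s_i\leq\delta_i\). For \(y\in B\cap K\) we have \(y-x_0\in B-B=Q\operatorname{diag}(2s_i)(-1/2,1/2)^n\), a box centred at the origin. Hence the identity gives
\[
 T(B\cap K)\subseteq T(x_0)+[-R_K,R_K]\cdot L(B-B).
\]
Since \(L(B-B)\) is symmetric, the set \([-R_K,R_K]\cdot L(B-B)\) equals \(R_K\,L(B-B)\), so the sign of the scalar factor causes no difficulty.

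Next I would bound \(L(B-B)\) exactly as in the proof of Proposition~\ref{prop:affine-invariance-prints}\textup{(iii)}. Take the singular value decomposition \(LQ\operatorname{diag}(2s_i)=U\Sigma V^{\mathsf T}\). The singular values satisfy \(\sigma_i\leq\|L\|\,2s_i\), since left multiplication by \(L\) lowers each singular value by at most the factor \(\|L\|\). The rotated unit cube lies in a ball of radius \(\sqrt n/2\). Consequently \(L(B-B)\) lies in a \(U\)-oriented box with ordered sides at most \(2\sqrt n\,L_K s_i\). After scaling by \(R_K\) and translating by \(T(x_0)\), the set \(T(B\cap K)\) lies in a box with ordered sides at most \(D_K\delta_i\).

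Finally, subdivide each side of length at most \(D_K\delta_i\) into \(1+\lceil D_K\rceil\) overlapping open intervals of length at most \(\delta_i\). The resulting product boxes, oriented by \(U\), have the same side ordering and so belong to \(\mathcal R_n(\delta)\). This gives at most \(Q_{T,K}=(1+\lceil D_K\rceil)^n\) boxes, as claimed.

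I expect the main obstacle to be the first step. One must find the right form of the identity, so that a single fixed linear map \(DT(x_0)\) controls all of \(T(B\cap K)\). This is essential because \(B\) itself may cross the pole hyperplane, and \(B\cap K\) need not be convex or connected, so no mean-value argument along segments is available. Once the identity is in hand, the remaining steps are the affine argument already given, with the constant enlarged by \(R_K\).
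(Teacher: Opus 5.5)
Your proposal is correct and follows essentially the same route as the paper. Both proofs use the exact identity \(T(y)-T(x)=\frac{q(x)}{q(y)}DT(x)(y-x)\) with the Jacobian frozen at a point of \(B\cap K\), the bound \(|q(x_0)/q(y)|\leq R_K\) absorbed by the symmetry of the centred box, the singular-value enclosure from the affine case giving sides at most \(D_K\delta_i\), and a subdivision into \((1+\lceil D_K\rceil)^n\) admissible boxes. Your version is slightly more explicit about why the sign of the scalar factor \(q(x_0)/q(y)\) is harmless, whereas the paper simply encloses the image in \(DT(x_B)Q\operatorname{diag}(s_i)(-R_K,R_K)^n\).
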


\begin{proof}
Differentiating \(T\) gives
\[
 DT(x)=\frac{A-T(x)c^{\mathsf T}}{q(x)}.
\]
This matrix is nonsingular: if \(DT(x)v=0\), then
\[
 M\binom{v}{0}
 =\frac{c^{\mathsf T}v}{q(x)}M\binom{x}{1},
\]
and applying \(M^{-1}\) and comparing the last coordinates yields
\(v=0\). Direct subtraction gives
\begin{equation}\label{eq:projective-increment-identity-exact}
 T(y)-T(x)=\frac{q(x)}{q(y)}DT(x)(y-x)
 \qquad(x,y\in U).
\end{equation}
We may assume that \(B\cap K\neq\varnothing\). Let
\(s_1\geq\cdots\geq s_n>0\) be the ordered side lengths of \(B\),
so that \(s_i\leq\delta_i\), and choose \(x_B\in B\cap K\).
Let \(Q\) be the orthogonal matrix whose columns are the corresponding
edge directions. For \(y\in B\cap K\),
\[
 y-x_B\in Q\operatorname{diag}(s_i)(-1,1)^n,
 \qquad \left|\frac{q(x_B)}{q(y)}\right|\leq R_K.
\]
By \eqref{eq:projective-increment-identity-exact},
\[
 T(B\cap K)-T(x_B)
 \subset DT(x_B)Q\operatorname{diag}(s_i)(-R_K,R_K)^n.
\]
Take a singular value decomposition
\[
 DT(x_B)Q\operatorname{diag}(s_i)=U_B\Sigma_B V_B^{\mathsf T},
\]
with singular values \(\sigma_1\geq\cdots\geq\sigma_n>0\).
As in the proof of Proposition~\ref{prop:affine-invariance-prints},
\[
 \sigma_i\leq\|DT(x_B)\|s_i\leq L_Ks_i.
\]
Since \(V_B^{\mathsf T}(-R_K,R_K)^n\) lies in the open ball of
radius \(\sqrt n R_K\), the image is contained in a \(U_B\)-oriented
open box whose ordered side lengths are
\(2\sqrt n R_K\sigma_i\leq D_Ks_i\leq D_K\delta_i\).
Each coordinate interval can be covered by at most
\(1+\lceil D_K\rceil\) open intervals of length at most \(\delta_i\).
Their Cartesian products give the required cover by at most
\((1+\lceil D_K\rceil)^n\) boxes in \(\mathcal{R}_n(\delta)\).
\end{proof}

\begin{proof}[Proof of Theorem~\ref{thm:projective-invariance-exact-prints}]
Write
\[
 M^{-1}=
 \begin{pmatrix}\widehat A&\widehat b\\
 \widehat c^{\mathsf T}&\widehat d\end{pmatrix},
 \qquad \widehat q(z)=\widehat c^{\mathsf T}z+\widehat d.
\]
The identity
\[
 M^{-1}\binom{T(x)}{1}=\frac1{q(x)}\binom{x}{1}
\]
implies
\begin{equation}\label{eq:projective-inverse-denominator-exact}
 \widehat q(T(x))=\frac1{q(x)},\qquad
 T^{-1}(z)=\frac{\widehat Az+\widehat b}{\widehat q(z)}.
\end{equation}
Applying the same identity with \(M\) and \(M^{-1}\) interchanged
shows that the corresponding rational maps are mutually inverse
between \(\{q\neq0\}\) and \(\{\widehat q\neq0\}\).

Fix \(\delta\in\Delta_n\). Applying
Lemma~\ref{lem:projective-box-enclosure-exact} to each box in a minimal
admissible cover of \(F\) gives
\[
 N(T(F);\delta)\leq Q_{T,K}N(F;\delta).
\]
The set \(T(K)\) is compact and, by
\eqref{eq:projective-inverse-denominator-exact}, disjoint from the pole
hyperplane of \(T^{-1}\). The lemma applied to \(T^{-1}\) and
\(T(K)\) therefore gives
\[
 N(F;\delta)\leq Q_{T^{-1},T(K)}N(T(F);\delta).
\]
Taking \(C=\max\{Q_{T,K},Q_{T^{-1},T(K)}\}\) proves
\eqref{eq:projective-cover-comparison-exact}.

For \(\alpha\in[0,\infty)^n\), multiply this comparison by
\(\delta^\alpha\) and take lower and upper limits over the full scale
space. This yields
\[
 \begin{aligned}
 C^{-1}\underline{\mathcal{B}}^\alpha(F)
 &\leq\underline{\mathcal{B}}^\alpha(T(F))\leq C\underline{\mathcal{B}}^\alpha(F),\\
 C^{-1}\overline{\mathcal{B}}^\alpha(F)
 &\leq\overline{\mathcal{B}}^\alpha(T(F))\leq C\overline{\mathcal{B}}^\alpha(F).
 \end{aligned}
\]
The corresponding contents are positive simultaneously, which proves
\eqref{eq:projective-print-equalities-exact}.
\end{proof}

Box dimension prints describe covering complexity relative to the affine
geometry of the ambient space. Isotropic scales recover the usual
box-counting problem, while independent contraction of the shorter sides
tests concentration near affine subspaces at finer transverse scales.
The resulting information can distinguish bi-Lipschitz equivalent
embeddings. Let \(1\leq m<n\), let \(D\subset\mathbb{R}^m\) be non-empty
and compact, and let \(f:D\to\mathbb{R}^{n-m}\) be Lipschitz. Since
\[
    |x-y|
    \leq |(x,f(x))-(y,f(y))|
    \leq \sqrt{1+\operatorname{Lip}(f)^2}\,|x-y|,
\]
the graph of \(f\) is bi-Lipschitz equivalent to \(D\times\{0\}\).
Thus any bi-Lipschitz invariant takes the same value on these two
embeddings, although their box dimension prints may differ.

For example, let
\[
    L=\{(t,0):|t|\leq1\},
    \qquad
    \Gamma=\{(t,t^2):|t|\leq1\}.
\]
Uniformly for \(0<\delta_2\leq\delta_1\ll1\),
\[
    N(L;\delta)\asymp\delta_1^{-1},
    \qquad
    N(\Gamma;\delta)
    \asymp\max\{\delta_1^{-1},\delta_2^{-1/2}\}.
\]
These estimates give
\[
    \underline{\mathrm{P}}(L)=\{(a,0):0\leq a\leq1\},
    \qquad
    \underline{\mathrm{P}}(\Gamma)
    =\{(a,b)\in[0,\infty)^2:a+2b\leq1\}.
\]
Both sets have box dimension \(1\). The coefficient \(2\) in the
lower print of \(\Gamma\) reflects its quadratic departure from the
tangent line.

The affine and projective covering comparisons are uniform in the
eccentricity of the boxes. In particular, the prints of a compact subset
of real projective space contained in an affine chart are independent of
the choice of chart. Smoothness alone does not imply this invariance:
the identity \eqref{eq:projective-increment-identity-exact} controls all
ordered side lengths without a transverse error, whereas a general
smooth coordinate change may produce a displacement larger than the
shortest covering scale.
\section{Covering laws and explicit prints of some sets}

We compute the prints of curves and spheres from covering estimates
uniform in orientation and eccentricity.

\subsection{Non-degenerate curves and fractal subsets}
\label{sec:3}

\begin{definition}
\label{def:uniform-finite-type}
Let \(\Gamma\subset\mathbb{R}^n\) be a non-empty compact embedded
\(C^{n+1}\) curve, possibly with boundary and not necessarily connected.
At boundary points, local parametrisations are understood to have
\(C^{n+1}\) extensions to open intervals. We call \(\Gamma\)
\emph{non-degenerate}, or of \emph{uniform finite type
\((1,\ldots,n)\)}, if every local unit-speed parametrisation
\(\gamma\) satisfies
\begin{equation}\label{eq:finite-type-filtration}
    \det\bigl(\gamma'(s),\ldots,\gamma^{(n)}(s)\bigr)\neq0
\end{equation}
at every parameter corresponding to a point of \(\Gamma\).
For \(p=\gamma(s)\), write
\[
    J_0(p)=\{0\},\qquad
    J_i(p)=\operatorname{span}
    \{\gamma'(s),\ldots,\gamma^{(i)}(s)\},
    \quad 1\leq i\leq n.
\]
\end{definition}

A uniform Taylor expansion in coordinates adapted to the osculating
subspaces will provide the upper covering bound.

\begin{lemma}
\label{lem:finite-type-normal-form}
Let \(\Gamma\) satisfy Definition~\ref{def:uniform-finite-type}.
There are constants \(r_0>0\) and \(0<c_0\leq C_0<\infty\) such that,
for every \(p\in\Gamma\), there are a unit-speed parametrisation
\(\gamma_p:(-r_0,r_0)\to\mathbb{R}^n\) of a local extension of
\(\Gamma\), with \(\gamma_p(0)=p\), and an orthonormal frame
\(e_1(p),\ldots,e_n(p)\) satisfying
\[
    J_i(p)=\operatorname{span}\{e_1(p),\ldots,e_i(p)\}
\]
and
\begin{equation}\label{eq:uniform-finite-type-expansion}
    \left|
    \langle\gamma_p(h)-p,e_i(p)\rangle-c_i(p)h^i
    \right|
    \leq C_0|h|^{i+1},
    \qquad |h|<r_0,\quad 1\leq i\leq n,
\end{equation}
where \(c_0\leq |c_i(p)|\leq C_0\).
\end{lemma}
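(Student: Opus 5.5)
The plan is to use the Frenet frame obtained by Gram--Schmidt applied to $\gamma'(s),\ldots,\gamma^{(n)}(s)$, followed by Taylor's theorem of order $n+1$ with explicit remainder. Uniformity will come from compactness of $\Gamma$ and continuity of all quantities in the base point.

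First I would fix, by compactness, a finite atlas of local unit-speed $C^{n+1}$ parametrisations. Using the extension convention at boundary points, each chart is defined on an open interval slightly larger than the parameter set of the corresponding piece of $\Gamma$. A Lebesgue-number argument then gives $r_0>0$ such that, for every $p\in\Gamma$, some chart $\gamma$ with $\gamma(s_p)=p$ is defined on $(s_p-r_0,s_p+r_0)$, and the determinant \eqref{eq:finite-type-filtration} stays bounded away from zero there. This uses continuity and compactness of the relevant closed parameter sets, possibly after shrinking the extensions. Set $\gamma_p(h)=\gamma(s_p+h)$. On a compact subinterval containing all the relevant parameters, $|\gamma^{(k)}|\leq C$ for $k\leq n+1$ and
\[
 |\det(\gamma'(s),\ldots,\gamma^{(n)}(s))|\geq c>0.
\]

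Next, define $e_1(p),\ldots,e_n(p)$ by Gram--Schmidt applied to $\gamma_p'(0),\ldots,\gamma_p^{(n)}(0)$. This gives $J_i(p)=\operatorname{span}\{e_1(p),\ldots,e_i(p)\}$ immediately, and $\langle\gamma_p^{(k)}(0),e_i(p)\rangle=0$ for $k<i$. Taylor's theorem gives
\[
 \gamma_p(h)-p=\sum_{k=1}^{n}\frac{h^k}{k!}\gamma_p^{(k)}(0)+R(h),
 \qquad |R(h)|\leq C|h|^{n+1}/(n+1)!.
\]
Pairing with $e_i(p)$ kills the terms with $k<i$. The term $k=i$ gives $c_i(p)h^i$ with $c_i(p)=\langle\gamma_p^{(i)}(0),e_i(p)\rangle/i!$. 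The terms $k>i$ and the remainder are bounded by a constant times $|h|^{i+1}$, since $|h|<r_0\leq1$ (shrink $r_0$ if needed). This yields \eqref{eq:uniform-finite-type-expansion} with an upper bound $|c_i(p)|\leq C/i!$.

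The main point needing care is the lower bound $|c_i(p)|\geq c_0$. Gram--Schmidt writes $\langle\gamma^{(i)},e_i\rangle$ as the distance from $\gamma^{(i)}(0)$ to $J_{i-1}(p)$. The product of these distances over $i=1,\ldots,n$ equals $|\det(\gamma'(0),\ldots,\gamma^{(n)}(0))|\geq c$, while each factor is at most $|\gamma^{(i)}(0)|\leq C$. Hence each factor is at least $c/C^{n-1}$, and one can take $c_0=\min_i c/(C^{n-1}i!)$. Note that $|\gamma'|=1$ gives the first factor equal to $1$. Finally, set $C_0$ to be the maximum of the Taylor constant and the upper bounds on $|c_i|$, enlarged so that $c_0\leq C_0$. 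The only delicate issue is ensuring that the derivative bounds and the determinant lower bound hold uniformly on the whole window $|h|<r_0$ around each point, which the compactness step handles.
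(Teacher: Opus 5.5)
Your proposal is correct and follows the paper's proof. Both arguments apply Gram--Schmidt to $\gamma_p'(0),\ldots,\gamma_p^{(n)}(0)$, set $c_i(p)=\langle\gamma_p^{(i)}(0),e_i(p)\rangle/i!$, and apply Taylor's theorem with uniform derivative bounds from a finite extended arclength atlas; your identity $\prod_i\operatorname{dist}(\gamma_p^{(i)}(0),J_{i-1}(p))=|\det(\gamma_p'(0),\ldots,\gamma_p^{(n)}(0))|$ simply makes explicit the uniform lower bound on $|c_i(p)|$ that the paper attributes to non-degeneracy and compactness.
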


\begin{proof}
Apply the Gram--Schmidt procedure to
\(\gamma_p'(0),\ldots,\gamma_p^{(n)}(0)\).
The resulting frame satisfies
\[
    \langle\gamma_p^{(j)}(0),e_i(p)\rangle=0
    \qquad(1\leq j<i).
\]
Set
\[
    c_i(p)=\frac{1}{i!}
    \langle\gamma_p^{(i)}(0),e_i(p)\rangle.
\]
Non-degeneracy and compactness give a uniform constant
\(\kappa_0>0\) such that
\begin{equation}\label{eq:finite-type-new-direction}
    i!|c_i(p)|
    =\operatorname{dist}\bigl(\gamma_p^{(i)}(0),J_{i-1}(p)\bigr)
    \geq\kappa_0
    \qquad(p\in\Gamma,\ 1\leq i\leq n).
\end{equation}
A finite arclength atlas, extended at boundary points, gives a common
chart radius and uniform bounds on derivatives through order \(n+1\).
Taylor's theorem now gives
\eqref{eq:uniform-finite-type-expansion}, with constants
independent of \(p\).
\end{proof}

The next two lemmas give a bound on the arclength contained in a box
of arbitrary orientation.

\begin{lemma}
\label{lem:one-dimensional-sublevel}
For each integer \(m\geq1\), there is a constant \(C_m\) with the
following property. If \(I\subset\mathbb{R}\) is an interval,
\(f\in C^m(I)\), and
\[
        |f^{(m)}(t)|\geq\lambda>0,
        \qquad t\in I,
\]
then, for every interval \(J\subset\mathbb{R}\),
\begin{equation}\label{eq:one-dimensional-sublevel}
        \left|\{t\in I:f(t)\in J\}\right|
        \leq
        C_m\left(\frac{|J|}{\lambda}\right)^{1/m}.
\end{equation}
\end{lemma}

\begin{proof}
The assertion is immediate for \(J=\varnothing\) or \(|J|=\infty\).
Suppose that \(J\neq\varnothing\) and \(0\leq|J|<\infty\), and put
\(E=\{t\in I:f(t)\in J\}\). By inner regularity, it suffices to bound
\(|E'|\) for compact \(E'\subset E\) with \(|E'|>0\).

By nonatomicity of Lebesgue measure, choose
\(x_0<\cdots<x_m\) in \(E'\) such that
\[
 |E'\cap[x_{j-1},x_j]|\geq\frac{|E'|}{m}
 \quad(1\leq j\leq m).
\]
Consequently,
\[
 |x_j-x_k|\geq\frac{|j-k|}{m}|E'|,
 \qquad
 \prod_{k\neq j}|x_j-x_k|
 \geq\left(\frac{|E'|}{m}\right)^m j!(m-j)!.
\]
The mean value formula for divided differences gives
\([x_0,\ldots,x_m]f=f^{(m)}(\xi)/m!\) for some
\(\xi\in(x_0,x_m)\). If \(c\) is the midpoint of the closure of \(J\),
then
\[
\begin{aligned}
 \frac{\lambda}{m!}
 &\leq\bigl|[x_0,\ldots,x_m](f-c)\bigr|\\
 &\leq\frac{|J|}{2}
       \sum_{j=0}^m\frac{1}{\prod_{k\neq j}|x_j-x_k|}\\
 &\leq\frac{2^{m-1}|J|}{m!}
       \left(\frac{m}{|E'|}\right)^m.
\end{aligned}
\]
Hence
\[
 |E'|\leq m\left(\frac{2^{m-1}|J|}{\lambda}\right)^{1/m}.
\]
Taking the supremum over compact \(E'\subset E\) proves the lemma.
\end{proof}

\begin{lemma}
\label{lem:finite-type-adapted-nondegeneracy}
Let \(\Gamma\subset\mathbb{R}^n\) have uniform finite type \((1,\ldots,n)\).
There are constants \(r_1,\kappa_1>0\) with the following property.
Let \(\gamma:I\to\Gamma\) be a unit-speed arclength chart, with either
orientation, and suppose that \(|I|\leq r_1\).
For \(s_0\in I\), put \(p=\gamma(s_0)\).
If \(1\leq i\leq n\) and \(v\in J_i(p)\) is a unit vector, then there is an
index \(1\leq j\leq i\) such that
\begin{equation}\label{eq:finite-type-uniform-scalar-derivative}
        \left|
        \frac{d^{j}}{ds^{j}}
        \langle\gamma(s),v\rangle
        \right|
        \geq \kappa_1,
        \qquad s\in I.
\end{equation}
\end{lemma}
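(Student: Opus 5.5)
The plan is to derive the scalar bound from the triangular structure of the osculating frame and then transport it along the short chart. For a unit vector \(v\in J_i(p)\), write \(v=\sum_{k\le i}v_ke_k(p)\) in the frame of Lemma~\ref{lem:finite-type-normal-form}. Since \(\gamma^{(j)}(s_0)\in J_j(p)\) and \(\langle\gamma^{(j)}(s_0),e_j(p)\rangle=\pm j!c_j(p)\), the vector of derivatives
\[
\bigl(\langle\gamma^{(j)}(s_0),v\rangle\bigr)_{1\le j\le i}
\]
is the image of \((v_1,\dots,v_i)\) under a triangular matrix. The diagonal entries of this matrix are at least \(\kappa_0\) in modulus, by \eqref{eq:finite-type-new-direction}. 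The off-diagonal entries are bounded by uniform bounds on the derivatives of \(\gamma\) up to order \(n+1\), which hold on a finite atlas. A reversed orientation only changes signs of odd derivatives, so it does not matter.

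The first step is to invert this triangular matrix. Its inverse has norm bounded by a constant depending only on \(\kappa_0\), the derivative bounds and \(n\). Since \(|v|=1\), this gives
\[
\max_{j\le i}|\langle\gamma^{(j)}(s_0),v\rangle|\ge 2\kappa_1
\]
for some uniform \(\kappa_1>0\). One subtlety is that \(\gamma\) is an arbitrary arclength chart and not \(\gamma_p\). This causes no trouble: \(\gamma(s)=\gamma_p(\pm(s-s_0))\) near \(s_0\), so the derivatives at \(s_0\) agree up to sign. In any case \(J_i(p)\) is defined intrinsically from any unit-speed parametrisation.

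The second step is to pass from the single point \(s_0\) to all of \(I\). Choose \(j\le i\) attaining the maximum above. The function \(s\mapsto\langle\gamma^{(j)}(s),v\rangle\) has derivative bounded by the uniform bound \(C_1\) on \(|\gamma^{(j+1)}|\), where \(j+1\le n+1\). Hence on \(I\), with \(|I|\le r_1\) and \(s_0\in I\), this function stays at least \(2\kappa_1-C_1r_1\ge\kappa_1\) in modulus, provided \(r_1\le\kappa_1/C_1\). Here \(r_1\) must also be smaller than the atlas radius, so that the derivative bounds apply throughout \(I\). This gives \eqref{eq:finite-type-uniform-scalar-derivative}.

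The main obstacle is making the constants uniform. This requires a uniform \(C^{n+1}\) bound for every unit-speed chart of length at most \(r_1\), including charts near boundary points, where the \(C^{n+1}\) extensions are used. It also requires \(j\) to be chosen once, depending on \(v\) and \(p\), before the perturbation step, which is exactly why the conclusion allows an index \(j\le i\). The uniform derivative bounds follow from compactness and the finite atlas already invoked in Lemma~\ref{lem:finite-type-normal-form}: any unit-speed chart is locally a reparametrisation \(s\mapsto\pm s+\text{const}\) of an atlas chart. The triangular inversion bound is routine once these bounds are in place.
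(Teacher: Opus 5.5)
Your proposal is correct and follows essentially the same argument as the paper. Both proofs use the lower-triangular matrix \(\bigl(\langle\gamma^{(j)}(s_0),e_k(p)\rangle\bigr)_{j,k\le i}\), whose diagonal is uniformly bounded below and whose inverse is therefore uniformly bounded; this gives a large derivative at \(s_0\), which is then propagated to all of \(I\). The only cosmetic difference is in the propagation step: you use the mean value theorem with the uniform bound on \(\gamma^{(j+1)}\) (order \(\le n+1\)), whereas the paper uses uniform continuity of the derivatives up to order \(n\).
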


\begin{proof}
In an adapted orthonormal basis at \(p=\gamma(s_0)\), set
\[
 M_i(p)_{jk}=\langle\gamma^{(j)}(s_0),e_k(p)\rangle,
 \qquad1\leq j,k\leq i.
\]
Lemma~\ref{lem:finite-type-normal-form} gives
\[
 M_i(p)_{jk}=0\quad(j<k),\qquad
 |M_i(p)_{jj}|\geq j!c_0.
\]
The remaining entries are uniformly bounded by compactness and
\(C^{n+1}\) regularity. Hence there is \(A<\infty\) such that
\[
 \|M_i(p)^{-1}\|\leq A
 \qquad(p\in\Gamma,\ 1\leq i\leq n).
\]
Reversing the arclength orientation changes only the signs of rows,
so the bound holds for either orientation.

Write \(v=\sum_{k=1}^i b_ke_k(p)\), where \(\|b\|_2=1\). Then
\[
 \max_{1\leq j\leq i}
 |\langle\gamma^{(j)}(s_0),v\rangle|
 =\|M_i(p)b\|_\infty
 \geq i^{-1/2}\|M_i(p)b\|_2
 \geq i^{-1/2}A^{-1}.
\]
Set \(\eta=n^{-1/2}A^{-1}\). Uniform continuity of the arclength
derivatives through order \(n\) gives \(r_1>0\) such that
\[
 |\gamma^{(j)}(s)-\gamma^{(j)}(s_0)|\leq\eta/2
 \quad\text{whenever }|s-s_0|\leq r_1,
 \qquad1\leq j\leq n,
\]
uniformly over the arclength charts and both orientations.

Choose \(j\leq i\) such that
\(|\langle\gamma^{(j)}(s_0),v\rangle|\geq\eta\). Since
\(|I|\leq r_1\),
\[
 |\langle\gamma^{(j)}(s),v\rangle|
 \geq\eta-|\gamma^{(j)}(s)-\gamma^{(j)}(s_0)|
 \geq\eta/2\qquad(s\in I).
\]
Thus \eqref{eq:finite-type-uniform-scalar-derivative} holds with
\(\kappa_1=\eta/2\).
\end{proof}

\begin{proposition}
\label{prop:finite-type-curve-profile}
Let \(n\geq2\), and let \(\Gamma\subset\mathbb{R}^n\) be a non-degenerate
curve as in Definition~\ref{def:uniform-finite-type}.
Then, uniformly for \(\delta\in\Delta_n\),
\begin{equation}\label{eq:finite-type-covering-law}
    N(\Gamma;\delta)
    \asymp
    \left(\min_{1\leq i\leq n}\delta_i^{1/i}\right)^{-1}.
\end{equation}
The implicit constants depend only on \(n\) and the uniform
chart and non-degeneracy data of \(\Gamma\).
\end{proposition}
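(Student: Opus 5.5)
The plan is to prove the two inequalities in \eqref{eq:finite-type-covering-law} separately. Put
\[
 h(\delta)=\min_{1\leq i\leq n}\delta_i^{1/i}\leq\delta_1<1 .
\]
The upper bound will come from the normal form of Lemma~\ref{lem:finite-type-normal-form}, and the lower bound from an arclength estimate for boxes based on Lemmas~\ref{lem:one-dimensional-sublevel} and~\ref{lem:finite-type-adapted-nondegeneracy}. Throughout, let \(\ell(\Gamma)<\infty\) denote the total arclength. Cover \(\Gamma\) by finitely many arclength charts of length at most \(\min\{r_0,r_1\}\); their number \(P\) depends only on the chart data.

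\emph{Upper bound.} Set \(h=c\,h(\delta)\), where \(c\in(0,1)\) is a small constant fixed below; in particular \(h<r_0\). For \(p\in\Gamma\) and \(|u|\leq h\), \eqref{eq:uniform-finite-type-expansion} gives
\[
 |\langle\gamma_p(u)-p,e_i(p)\rangle|\leq C_0h^i+C_0h^{i+1}\leq 2C_0c^{\,i}h(\delta)^i\leq 2C_0c\,\delta_i ,
\]
using \(h(\delta)^i\leq\delta_i\) and \(c\leq1\). Hence the arc \(\gamma_p([-h,h])\) lies in the box centred at \(p\), with edges along \(e_1(p),\ldots,e_n(p)\) and side \(4C_0c\,h(\delta)^i\) in direction \(e_i(p)\). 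These side lengths decrease in \(i\), so they are already ordered. Choosing \(c\) with \(4C_0c<1\) makes them at most \(\delta_i\), so the box belongs to \(\mathcal R_n(\delta)\). Placing centres \(p\) at arclength spacing \(h\) along each chart covers \(\Gamma\) with \(\lesssim\ell(\Gamma)/h+P\lesssim h(\delta)^{-1}\) boxes; the last step uses \(h(\delta)<1\).

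\emph{Lower bound.} The key claim is that every \(B\in\mathcal R_n(\delta)\) meets each chart \(\gamma:I\to\Gamma\) in a set of arclength at most \(C\delta_k^{1/k}\), for every \(k\). To prove it, let \(w_1,\ldots,w_n\) be the edge directions of \(B\), ordered by decreasing side length, and set \(W=\operatorname{span}\{w_k,\ldots,w_n\}\). Then \(\dim W=n-k+1\), and each side length of \(B\) in these directions is at most \(\delta_k\). Fix any \(s_0\in I\) and put \(p=\gamma(s_0)\). Since \(\dim J_k(p)+\dim W=n+1>n\), there is a unit vector \(v\in W\cap J_k(p)\). The projection of \(B\) onto \(v\) is an interval \(J\) with \(|J|\leq\sqrt n\,\delta_k\), so
\[
 \{s\in I:\gamma(s)\in B\}\subseteq\{s\in I:\langle\gamma(s),v\rangle\in J\}.
\]
Lemma~\ref{lem:finite-type-adapted-nondegeneracy} gives some \(j\leq k\) with \(|(d/ds)^j\langle\gamma,v\rangle|\geq\kappa_1\) on \(I\). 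Lemma~\ref{lem:one-dimensional-sublevel} then bounds the measure of this set by
\[
 C_j(\sqrt n\,\delta_k/\kappa_1)^{1/j}\lesssim\delta_k^{1/k},
\]
where the last step uses \(\delta_k<1\) and \(j\leq k\). Minimising over \(k\), each box captures arclength \(\lesssim P\,h(\delta)\) of \(\Gamma\). Since a cover must capture all of \(\ell(\Gamma)>0\), we get \(N(\Gamma;\delta)\gtrsim h(\delta)^{-1}\).

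I expect the main obstacle to be this lower-bound step. The box has arbitrary orientation, so it is not clear a priori which osculating direction controls a given box. The dimension count \(W\cap J_k(p)\neq\{0\}\) resolves this, but it only works because Lemma~\ref{lem:finite-type-adapted-nondegeneracy} gives a derivative bound uniform over all of \(I\) and over all unit \(v\in J_k(p)\). That is exactly why the charts must have length at most \(r_1\). The remaining bookkeeping, namely the boundary charts, the disconnected components, and the dependence of the constants only on \(n\) and the chart data, is routine.
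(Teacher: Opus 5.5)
Your proposal is correct and follows essentially the same route as the paper. For the upper bound, both cover arcs of arclength comparable to \(\min_i\delta_i^{1/i}\) by boxes aligned with the adapted frame of Lemma~\ref{lem:finite-type-normal-form}. For the lower bound, both use the dimension count \(\dim J_k(p)+\dim\operatorname{span}\{w_k,\ldots,w_n\}=n+1\) together with Lemmas~\ref{lem:finite-type-adapted-nondegeneracy} and~\ref{lem:one-dimensional-sublevel} to bound the arclength in each admissible box, and then count total arclength. Your deviations (taking \(p\) anywhere in the chart rather than in \(B\), and using symmetric arcs \(\gamma_p([-h,h])\) with explicit constants) are harmless, because Lemma~\ref{lem:finite-type-adapted-nondegeneracy} holds uniformly over the whole chart.
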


\begin{proof}
Fix \(\delta\in\Delta_n\), and set
\[
    \sigma(\delta)=\min_{1\leq i\leq n}\delta_i^{1/i}.
\]
For the lower bound, cover \(\Gamma\) by finitely many unit-speed
arclength charts
\[
    \gamma_\ell:I_\ell\to\Gamma,
    \qquad 1\leq\ell\leq L,
\]
with \(|I_\ell|\leq r_1\), where \(r_1\) is given by
Lemma~\ref{lem:finite-type-adapted-nondegeneracy}.

Let \(B\in\mathcal{R}_n(\delta)\), with ordered side lengths
\[
    s_1\geq\cdots\geq s_n,
    \qquad s_k\leq\delta_k,
\]
and corresponding orthonormal edge directions \(f_1,\ldots,f_n\).
Fix \(1\leq i\leq n\) and an index \(\ell\) such that
\(\gamma_\ell(I_\ell)\cap B\neq\varnothing\).
Choose \(p\in\gamma_\ell(I_\ell)\cap B\), and put
\[
    W_i(B)=\operatorname{span}\{f_i,\ldots,f_n\}.
\]
The identity
\[
    \dim J_i(p)+\dim W_i(B)=i+(n-i+1)=n+1
\]
implies that there is a unit vector
\[
    v\in J_i(p)\cap W_i(B).
\]
The projection of \(B\) onto the line spanned by \(v\) is an
interval \(J\) with
\[
    |J|
    \leq\sum_{k=i}^n s_k|\langle v,f_k\rangle|
    \leq\sqrt{n-i+1}\,\delta_i.
\]
Lemma~\ref{lem:finite-type-adapted-nondegeneracy} gives an index
\(1\leq j\leq i\) such that
\[
    \left|
    \frac{d^j}{ds^j}\langle\gamma_\ell(s),v\rangle
    \right|
    \geq\kappa_1
    \qquad(s\in I_\ell).
\]
By Lemma~\ref{lem:one-dimensional-sublevel},
\[
    |\{s\in I_\ell:\gamma_\ell(s)\in B\}|
    \lesssim\delta_i^{1/j}
    \leq\delta_i^{1/i}.
\]
Summing over the charts and then taking the minimum over \(i\) gives
\begin{equation}\label{eq:finite-type-box-mass}
    \mathcal{H}^1(\Gamma\cap B)\lesssim\sigma(\delta).
\end{equation}

If \(B_1,\ldots,B_N\in\mathcal{R}_n(\delta)\) cover \(\Gamma\), then
\[
    0<\mathcal{H}^1(\Gamma)
    \leq\sum_{k=1}^N\mathcal{H}^1(\Gamma\cap B_k)
    \lesssim N\sigma(\delta).
\]
Hence
\[
    N(\Gamma;\delta)\gtrsim\sigma(\delta)^{-1}.
\]

For the upper bound, Lemma~\ref{lem:finite-type-normal-form} gives
\(C_1<\infty\) such that, for either arclength orientation, the arc
\(\gamma_p([0,r])\) is contained in an open box \(B_{p,r}\)
whose sides are parallel to the adapted frame and satisfy
\[
    l_i(B_{p,r})=C_1r^i
\]
for \(p\in\Gamma\), \(0<r<\min\{1,r_0\}\), and \(1\leq i\leq n\).
Choose \(\eta\in(0,\min\{1,r_0\})\) so small that
\[
    C_1\eta^i\leq1,
    \qquad 1\leq i\leq n.
\]
For \(r=\eta\sigma(\delta)\),
\[
    l_i(B_{p,r})
    \leq C_1\eta^i\sigma(\delta)^i
    \leq\delta_i,
\]
so \(B_{p,r}\in\mathcal{R}_n(\delta)\). Partition \(\Gamma\)
into \(O(r^{-1})\) subarcs of arclength at most \(r\), and cover each
subarc by one such box. Then
\[
    N(\Gamma;\delta)
    \lesssim r^{-1}
    \asymp\sigma(\delta)^{-1}.
\]
\end{proof}

The covering law reduces the computation of both prints to
Proposition~\ref{thm:profile-variational}.

\begin{theorem}
\label{thm:finite-type-curve-prints}
Let \(\Gamma\) satisfy the hypotheses of
Proposition~\ref{prop:finite-type-curve-profile}.
Then \(\Gamma\) is uniformly profile-regular, with
\begin{equation}\label{eq:finite-type-profile}
    \rho_\Gamma(a)=\max_{1\leq i\leq n}\frac{a_i}{i},
    \qquad a\in\mathcal K_n.
\end{equation}
Its lower and upper prints are
\begin{equation}\label{eq:finite-type-lower-print}
    \underline{\mathrm{P}}(\Gamma)
    =\left\{\alpha\in[0,\infty)^n:
    \sum_{i=1}^n i\alpha_i\leq1\right\}
\end{equation}
and
\begin{equation}\label{eq:finite-type-upper-print}
\begin{aligned}
    \overline{\mathrm{P}}(\Gamma)
    =\left\{\alpha\in[0,\infty)^n:S_1(\alpha)\leq1\right\}
    \cup\bigcup_{k=2}^n
    \left\{\alpha\in[0,\infty)^n:S_k(\alpha)<\frac1k\right\}.
\end{aligned}
\end{equation}
\end{theorem}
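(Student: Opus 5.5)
The plan is to substitute \(\delta=e^{-ta}\) into the covering law of Proposition~\ref{prop:finite-type-curve-profile}, obtain uniform profile regularity, and then read off both prints from Proposition~\ref{thm:profile-variational}.

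\emph{Profile and regularity.} For \(a\in\mathcal C_n\) and \(\delta=e^{-ta}\),
\[
 \min_i\delta_i^{1/i}=e^{-t\max_i a_i/i}.
\]
The covering law \eqref{eq:finite-type-covering-law} is uniform over all of \(\Delta_n\). It therefore gives
\[
 C^{-1}e^{t\rho(a)}\leq N(\Gamma;e^{-ta})\leq Ce^{t\rho(a)},
 \qquad \rho(a)=\max_i a_i/i,
\]
uniformly in \(a\) and \(t\). Hence the profile exists, equals \eqref{eq:finite-type-profile}, and \(\Gamma\) is uniformly profile-regular.

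\emph{Lower print.} By \eqref{eq:lower-print-duality}, \(\alpha\in\underline{\mathrm P}(\Gamma)\) if and only if
\[
 \alpha\cdot a\leq\max_i a_i/i\qquad\text{for all }a\in\mathcal K_n.
\]
Sufficiency of \(\sum_i i\alpha_i\le1\): since \(a_i\le i\max_j a_j/j\), we get \(\alpha\cdot a\le(\sum_i i\alpha_i)\rho(a)\le\rho(a)\).

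Necessity: test with \(a_i=i\), which lies in \(\mathcal K_n\) and has \(\rho(a)=1\). The inequality becomes \(\sum_i i\alpha_i\le1\). This gives \eqref{eq:finite-type-lower-print}.

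\emph{Upper print.} By \eqref{eq:upper-print-duality}, \(\alpha\in\overline{\mathrm P}(\Gamma)\) if and only if
\[
 m:=\inf_{a\in\mathcal C_n}\bigl(\alpha\cdot a-\max_i a_i/i\bigr)\leq0.
\]
I would parametrise \(a\in\mathcal C_n\) by increments \(u_1=1\) and \(u_k=a_k-a_{k-1}\ge0\) for \(k\ge2\), so that \(\alpha\cdot a=\sum_k u_kS_k(\alpha)\), as in \eqref{eq:tail-summation-by-parts}.

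\emph{Membership.} If \(S_1(\alpha)\le1\), take \(a=(1,\dots,1)\); then \(\Psi_\alpha(a)=S_1(\alpha)-1\le0\). If \(S_k(\alpha)<1/k\) for some \(k\ge2\), take \(a=(1,\dots,1,R,\dots,R)\) with \(R\) first appearing in position \(k\). For large \(R\) the maximum of \(a_i/i\) is \(R/k\), so
\[
 \Psi_\alpha(a)=S_1(\alpha)-S_k(\alpha)+R\,S_k(\alpha)-R/k\to-\infty.
\]

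\emph{Exclusion (main obstacle).} Assume \(S_1(\alpha)>1\) and \(S_k(\alpha)\ge1/k\) for all \(k\ge2\). We must show \(m>0\); strictness matters because of the infimum over a noncompact set.

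Fix \(a\in\mathcal C_n\) and let \(j\) attain \(\max_i a_i/i\).
\begin{itemize}
\item If \(j=1\), then \(\rho(a)=1\) and \(\alpha\cdot a\ge S_1(\alpha)\), so \(\Psi_\alpha(a)\ge S_1(\alpha)-1\).
\item If \(j\ge2\), drop the increments \(u_k\) with \(k>j\) and write \(a_j=1+\sum_{k=2}^j u_k\). Then
\[
 \alpha\cdot a-\frac{a_j}{j}\ge S_1(\alpha)-\frac1j+\sum_{k=2}^j u_k\Bigl(S_k(\alpha)-\frac1j\Bigr)\ge S_1(\alpha)-\frac1j,
\]
using \(S_k(\alpha)\ge1/k\ge1/j\) for \(k\le j\).
\end{itemize}
Hence \(m\ge S_1(\alpha)-1>0\), and \(\alpha\notin\overline{\mathrm P}(\Gamma)\).

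The step to verify most carefully is this exclusion bound, so that no boundary case slips through, together with the check that the set described matches \eqref{eq:finite-type-upper-print} exactly.
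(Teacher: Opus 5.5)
Your proof is correct and follows the paper's argument. You substitute \(\delta=e^{-ta}\) into the covering law and then apply Proposition~\ref{thm:profile-variational} with the same test vectors \((1,\ldots,n)\), \((1,\ldots,1)\) and \((1,\ldots,1,R,\ldots,R)\); the only cosmetic difference is in the exclusion step, where you split according to the index attaining \(\max_i a_i/i\), whereas the paper bounds every \(a_i/i\) at once by \(1+\sum_{k=2}^n\lambda_k/k\), and both yield \(\inf_{a\in\mathcal C_n}\Psi_\alpha(a)\geq S_1(\alpha)-1\).
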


\begin{proof}
Set \(\delta=e^{-ta}\) in
\eqref{eq:finite-type-covering-law} to obtain
\[
    N(\Gamma;e^{-ta})
    \asymp e^{t\max_i a_i/i},
    \qquad a\in\mathcal K_n,\quad t>0,
\]
with constants independent of \(a\) and \(t\). Hence \(\Gamma\) is
uniformly profile-regular, with profile \eqref{eq:finite-type-profile}.

By Proposition~\ref{thm:profile-variational},
\(\alpha\in\underline{\mathrm{P}}(\Gamma)\) if and only if
\(\alpha\cdot a\leq\rho_\Gamma(a)\) for every
\(a\in\mathcal K_n\). If \(\sum_i i\alpha_i\leq1\), then
\[
    \alpha\cdot a
    \leq\left(\sum_{i=1}^n i\alpha_i\right)
         \max_{1\leq j\leq n}\frac{a_j}{j}
    \leq\rho_\Gamma(a).
\]
Conversely, the choice \(a=(1,\ldots,n)\) gives
\(\sum_i i\alpha_i\leq1\). This proves
\eqref{eq:finite-type-lower-print}.

For the upper print, set
\[
    \Psi_\alpha(a)
    =\alpha\cdot a-\max_{1\leq i\leq n}\frac{a_i}{i}.
\]
Proposition~\ref{thm:profile-variational} gives
\[
    \alpha\in\overline{\mathrm{P}}(\Gamma)
    \quad\Longleftrightarrow\quad
    \inf_{a\in\mathcal{C}_n}\Psi_\alpha(a)\leq0.
\]
If \(S_1(\alpha)\leq1\), then
\[
    \Psi_\alpha(1,\ldots,1)=S_1(\alpha)-1\leq0.
\]
If \(S_k(\alpha)<1/k\) for some \(k\geq2\), take
\[
    a=(\underbrace{1,\ldots,1}_{k-1},
       \underbrace{R,\ldots,R}_{n-k+1}),
    \qquad R\geq k.
\]
Then
\[
    \Psi_\alpha(a)
    =\sum_{i<k}\alpha_i
     +R\left(S_k(\alpha)-\frac1k\right)
    \longrightarrow-\infty
    \qquad(R\to\infty).
\]

Conversely, suppose that \(S_1(\alpha)>1\) and
\(S_k(\alpha)\geq1/k\) for every \(k\geq2\).
For \(a\in\mathcal{C}_n\), set
\(\lambda_k=a_k-a_{k-1}\geq0\), \(2\leq k\leq n\).
Since
\[
\begin{aligned}
    \alpha\cdot a
    &=S_1(\alpha)+\sum_{k=2}^n\lambda_kS_k(\alpha)
      \geq S_1(\alpha)+\sum_{k=2}^n\frac{\lambda_k}{k},\\
    \frac{a_i}{i}
    &=\frac{1+\sum_{k=2}^i\lambda_k}{i}
      \leq1+\sum_{k=2}^n\frac{\lambda_k}{k},
      \qquad 1\leq i\leq n,
\end{aligned}
\]
we obtain
\[
    \inf_{a\in\mathcal{C}_n}\Psi_\alpha(a)
    \geq S_1(\alpha)-1>0.
\]
This proves \eqref{eq:finite-type-upper-print}.
\end{proof}

We next consider Ahlfors regular subsets of a non-degenerate curve.
A compact set \(T\subset\mathbb{R}\) with \(\operatorname{diam} T>0\) is
\emph{Ahlfors \(s\)-regular}, where \(0<s\leq1\), if there
are a finite Borel measure \(\mu\) with
\(\operatorname{spt}\mu=T\) and constants
\(0<c_T\leq C_T<\infty\) such that
\begin{equation}\label{eq:ahlfors-regular-parameter}
    c_Tr^s
    \leq\mu(T\cap[t-r,t+r])
    \leq C_Tr^s
\end{equation}
for every \(t\in T\) and \(0<r\leq\operatorname{diam} T\).

\begin{lemma}
\label{lem:finite-type-box-components}
Let \(U\subset\mathbb{R}\) be an open interval, let \(I\Subset U\)
be a compact interval, and let \(\gamma:U\to\mathbb{R}^n\) be a
\(C^{n+1}\) embedding such that
\[
    \det\bigl(\gamma'(t),\ldots,\gamma^{(n)}(t)\bigr)\neq0,
    \qquad t\in I.
\]
There is an integer \(M_\gamma\geq1\), depending only on \(\gamma\)
and \(I\), such that \(\gamma^{-1}(B)\cap I\) is a union of at most
\(M_\gamma\) intervals for every open box
\(B\subset\mathbb{R}^n\) of arbitrary orientation.
\end{lemma}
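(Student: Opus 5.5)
The number of intervals in \(\gamma^{-1}(B)\cap I\) is controlled by the number of sign changes of the affine functions that define \(B\) along the curve. An open box is the intersection of \(2n\) open half-spaces,
\[
 B=\bigcap_{k=1}^n\{x:\ a_k<\langle x,f_k\rangle<b_k\},
\]
with \(f_1,\ldots,f_n\) orthonormal. So \(\gamma^{-1}(B)\cap I\) is the intersection of the \(n\) sets
\[
 E_k=\{t\in I:\ a_k<g_k(t)<b_k\},\qquad g_k(t)=\langle\gamma(t),f_k\rangle .
\]
Each \(E_k\) is relatively open in \(I\). If each \(E_k\) is a union of at most \(m\) intervals, then the intersection of \(n\) such sets is a union of at most \(nm\) intervals. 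Indeed, the endpoints of the component intervals of the intersection are among the at most \(2nm\) endpoints of the component intervals of the \(E_k\), so the count is linear. It therefore suffices to bound, uniformly over unit vectors \(v\) and intervals \((a,b)\), the number of components of \(\{t\in I:\ a<\langle\gamma(t),v\rangle<b\}\).

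For a fixed \(v\), the components of this set are separated by points where \(g=\langle\gamma,v\rangle\) takes the value \(a\) or \(b\). Between two consecutive components there is a gap, and on it \(g\) exits and re-enters the strip. Hence the number of components is at most one more than the number of solutions of \(g=a\) plus the number of solutions of \(g=b\), roughly. The key input is a uniform bound on the number of zeros of \(g-c\) on a short subinterval, valid for every unit vector \(v\) and every constant \(c\).

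I would argue as in Lemma~\ref{lem:finite-type-adapted-nondegeneracy}. Non-degeneracy on the compact interval \(I\) gives \(\|M(t)^{-1}\|\le A\) for the matrix \(M(t)\) with rows \(\gamma'(t),\ldots,\gamma^{(n)}(t)\). Hence for each unit \(v\) and each \(t_0\in I\) there is \(1\le j\le n\) with \(|\langle\gamma^{(j)}(t_0),v\rangle|\ge\eta\). By uniform continuity of these derivatives on a neighbourhood of \(I\), there is \(r>0\) such that on every subinterval \(I'\subset I\) of length at most \(r\) containing \(t_0\), the function \((g-c)^{(j)}\) is bounded away from zero. Here \(j\ge1\), so subtracting the constant \(c\) does not change this derivative. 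By Rolle's theorem, \(g-c\) then has at most \(j\le n\) zeros on \(I'\). Covering \(I\) by \(P=\lceil |I|/r\rceil\) such subintervals bounds the zeros of \(g-a\) and of \(g-b\) by \(nP\) each. This gives at most \(2nP+1\) components for each \(E_k\), and hence
\[
 M_\gamma=n(2nP+1)
\]
components for \(\gamma^{-1}(B)\cap I\).

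The main obstacle is making the zero count uniform in \(v\). I handle this by choosing the index \(j\) pointwise from the matrix bound, exactly as in the earlier lemma. The radius \(r\) depends only on the uniform continuity modulus of \(\gamma',\ldots,\gamma^{(n)}\) on a compact neighbourhood of \(I\) and on \(\eta\). In particular \(r\) is independent of \(v\), of \(c\), and of the box, and the whole argument uses only data of \(\gamma\) and \(I\).
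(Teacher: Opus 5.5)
Your proposal is correct and follows essentially the same route as the paper: uniform non-degeneracy on \(I\) and Rolle's theorem bound by \(n\) the number of solutions of \(\langle\gamma(t),v\rangle=c\) on each of finitely many short subintervals, and the components are then counted through the points where \(\gamma\) meets the boundary hyperplanes of \(B\). The differences are only bookkeeping. You work in the original parameter rather than reparametrising by arclength, and you bound each slab separately before intersecting; your bound of at most \(nm\) components for the intersection holds via distinct left endpoints. The paper instead collects all \(2n\) boundary level sets on each subinterval at once, which gives \(M_\gamma=L(2n^2+1)\).
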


\begin{proof}
The singleton case is immediate. After arclength reparametrisation,
Lemma~\ref{lem:finite-type-adapted-nondegeneracy} with \(i=n\)
and Rolle's theorem give a finite partition
\(I=\bigcup_{\ell=1}^{L}I_\ell\) such that
\[
 \#\{t\in I_\ell:\langle\gamma(t),v\rangle=c\}\leq n
 \qquad(v\in S^{n-1},\ c\in\mathbb R).
\]
Write \(B=\{x:a_k<\langle x,v_k\rangle<b_k,\ 1\leq k\leq n\}\)
and set
\[
 Z_\ell=\bigcup_{k=1}^{n}
 \{t\in I_\ell:\langle\gamma(t),v_k\rangle\in\{a_k,b_k\}\},
 \qquad \#Z_\ell\leq2n^2.
\]
By continuity, each component of \(I_\ell\setminus Z_\ell\)
is either contained in \(\gamma^{-1}(B)\) or disjoint from it. Thus
\(\gamma^{-1}(B)\cap I_\ell\) is a union of at most \(2n^2+1\)
intervals, and we may take \(M_\gamma=L(2n^2+1)\).
\end{proof}

\begin{proposition}
\label{prop:finite-type-fractal-parameter}
Let \(U\subset\mathbb{R}\) be an open interval, let \(I\Subset U\)
be a compact interval, and let \(\gamma:U\to\mathbb{R}^n\) be a
\(C^{n+1}\) embedding satisfying
\[
    \det\bigl(\gamma'(t),\ldots,\gamma^{(n)}(t)\bigr)\neq0,
    \qquad t\in I.
\]
Let \(T\subset I\) be a compact Ahlfors \(s\)-regular set, where
\(0<s\leq1\), with \(\operatorname{diam} T>0\).
Then, uniformly for \(\delta\in\Delta_n\),
\begin{equation}\label{eq:fractal-parameter-covering-law}
    N(\gamma(T);\delta)
    \asymp
    \left(\min_{1\leq i\leq n}\delta_i^{1/i}\right)^{-s}.
\end{equation}
The implicit constants may depend on \(\gamma\), \(I\), and the
Ahlfors regularity data of \(T\), but not on \(\delta\).
\end{proposition}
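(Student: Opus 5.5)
Set \(\sigma=\sigma(\delta)=\min_i\delta_i^{1/i}\). The plan mirrors the proof of Proposition~\ref{prop:finite-type-curve-profile}: we take the pushforward \(\nu=\gamma_*\mu\) in place of arclength for the lower bound, and use the Ahlfors covering number of \(T\) at scale \(\asymp\sigma\) for the upper bound. Since \(\gamma\) is a \(C^{n+1}\) embedding with nonvanishing \(\gamma'\) on the compact interval \(I\), arclength is bi-Lipschitz equivalent to the parameter \(t\). So we may reparametrise by arclength and work with a compact non-degenerate arc \(\Gamma=\gamma(I)\). Its local data satisfy Lemmas~\ref{lem:finite-type-normal-form}--\ref{lem:finite-type-adapted-nondegeneracy}, and the Ahlfors regularity of \(T\) transfers, with changed constants, to the arclength parameter set. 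We also note that \(\sigma\leq\delta_1<1\). For \(\sigma\) bounded below by a constant, both sides are comparable to \(1\), so only small \(\sigma\) matters.

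\emph{Upper bound.} Ahlfors regularity gives that \(T\) can be covered by \(O(r^{-s})\) parameter intervals of length \(r\), for \(0<r\leq\operatorname{diam}T\). This follows by a maximal \(r/2\)-separated subset and the lower mass bound in \eqref{eq:ahlfors-regular-parameter}. Take \(r=\eta\sigma\) with \(\eta\) as in the upper-bound part of Proposition~\ref{prop:finite-type-curve-profile}. Each corresponding subarc \(\gamma_p([0,r])\) lies in a box \(B_{p,r}\in\mathcal R_n(\delta)\), possibly after splitting it into boundedly many pieces to account for the reparametrisation constant. Hence \(N(\gamma(T);\delta)\lesssim\sigma^{-s}\).

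\emph{Lower bound.} Let \(B\in\mathcal R_n(\delta)\). Exactly as in \eqref{eq:finite-type-box-mass}, for each \(i\) one finds a unit vector \(v\in J_i(p)\cap W_i(B)\). Lemmas~\ref{lem:finite-type-adapted-nondegeneracy} and \ref{lem:one-dimensional-sublevel} then bound the Lebesgue measure of \(\gamma^{-1}(B)\cap I\) by \(C\sigma\). The key additional step is to control \(\mu\), not Lebesgue measure, of this set, and this is where Lemma~\ref{lem:finite-type-box-components} enters. The set \(\gamma^{-1}(B)\cap I\) is a union of at most \(M_\gamma\) intervals, each of length \(\leq C\sigma\). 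Any such interval meeting \(T\) lies in \([t-C\sigma,t+C\sigma]\) for some \(t\in T\). So the upper Ahlfors bound gives \(\mu(\gamma^{-1}(B))\leq M_\gamma C_T(C\sigma)^s\), at least once \(C\sigma\leq\operatorname{diam}T\); otherwise \(\sigma\) is bounded below and the claim is trivial. If \(B_1,\ldots,B_N\) cover \(\gamma(T)\), then
\[
0<\mu(T)\leq\sum_k\mu\bigl(\gamma^{-1}(B_k)\bigr)\lesssim N\sigma^s,
\]
which gives \(N(\gamma(T);\delta)\gtrsim\sigma^{-s}\).

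The main obstacle is this measure transfer. Lemma~\ref{lem:one-dimensional-sublevel} controls only the total length of the sublevel set, not how it is distributed. A set of small total length could in principle be spread over many short intervals and pick up much more \(\mu\)-mass than \(\sigma^s\), because \(s<1\). Lemma~\ref{lem:finite-type-box-components} is precisely what rules this out. I would check that the sublevel estimate applies to each component separately; it does, since each component is contained in the full sublevel set. I would also verify that the chart-splitting into pieces of length \(\leq r_1\) only costs a constant factor, uniformly in \(B\).
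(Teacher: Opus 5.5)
Your proposal is correct and is essentially the paper's proof. For the lower bound, you push \(\mu\) forward, use the arclength box estimate \eqref{eq:finite-type-box-mass} together with Lemma~\ref{lem:finite-type-box-components} to get \(\mu(T\cap\gamma^{-1}(B))\lesssim\sigma^s\), and sum over a cover; for the upper bound, you cover \(T\) by \(O(\sigma^{-s})\) parameter intervals via a maximal separated set and place each image arc in an adapted box. The only difference is cosmetic: you bound each of the at most \(M_\gamma\) components by the total length \(C\sigma\) and discard large \(\sigma\), whereas the paper uses \(\sum_j|J_j|^s\le m^{1-s}\bigl(\sum_j|J_j|\bigr)^s\) and treats intervals longer than \(\operatorname{diam}T\) directly; both give the same bound.
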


\begin{proof}
Let \(\mu\) be a measure satisfying
\eqref{eq:ahlfors-regular-parameter} for \(T\).
Since \(\gamma\) is a \(C^1\) embedding and \(I\) is compact,
there are constants \(0<m_\gamma\leq M_\gamma'<\infty\)
such that
\[
    m_\gamma\leq|\gamma'(t)|\leq M_\gamma',
    \qquad t\in I.
\]
Thus parameter length and arclength are uniformly comparable.
Reparametrisation by arclength preserves non-degeneracy, and
compactness gives uniform derivative and non-degeneracy bounds
on \(I\). Set
\[
    \sigma(\delta)=\min_{1\leq i\leq n}\delta_i^{1/i}.
\]
Since \(\gamma(I)\) satisfies Definition~\ref{def:uniform-finite-type},
\eqref{eq:finite-type-box-mass} gives
\begin{equation}\label{eq:finite-type-arc-box-mass}
    \mathcal{H}^1\bigl(\gamma(I)\cap B\bigr)
    \lesssim\sigma(\delta)
\end{equation}
for every \(B\in\mathcal{R}_n(\delta)\).

By Lemma~\ref{lem:finite-type-box-components}, if
\(\gamma^{-1}(B)\cap I\neq\varnothing\), we may write
\[
    \gamma^{-1}(B)\cap I
    =J_1\cup\cdots\cup J_m,
    \qquad m\leq M_\gamma,
\]
where the \(J_j\) are pairwise disjoint intervals.
Injectivity, parameter--arclength comparability, and
\eqref{eq:finite-type-arc-box-mass} imply
\[
    \sum_{j=1}^m|J_j|
    \lesssim\mathcal{H}^1\bigl(\gamma(I)\cap B\bigr)
    \lesssim\sigma(\delta).
\]

For any interval \(J\) meeting \(T\), choose \(t\in T\cap J\).
If \(0<|J|\leq\operatorname{diam} T\), then
\[
    \mu(T\cap J)
    \leq\mu(T\cap[t-|J|,t+|J|])
    \leq C_T|J|^s.
\]
For \(|J|>\operatorname{diam} T\),
\[
    \mu(T\cap J)
    \leq\mu(T)(\operatorname{diam} T)^{-s}|J|^s.
\]
The upper Ahlfors bound implies that \(\mu\) is nonatomic,
so the same bound holds when \(|J|=0\). It also holds trivially
when \(J\cap T=\varnothing\). Consequently,
\[
\begin{aligned}
    \mu\bigl(T\cap\gamma^{-1}(B)\bigr)
    \lesssim\sum_{j=1}^m|J_j|^s
    \leq m^{1-s}\left(\sum_{j=1}^m|J_j|\right)^s
    \lesssim\sigma(\delta)^s.
\end{aligned}
\]
The same estimate is immediate if \(\gamma^{-1}(B)\cap I=\varnothing\).

Let \(\lambda=\gamma_\#\mu\). The lower Ahlfors bound gives
\[
    \lambda(\gamma(T))
    =\mu(T)\geq c_T(\operatorname{diam} T)^s>0.
\]
If \(B_1,\ldots,B_N\in\mathcal{R}_n(\delta)\) cover \(\gamma(T)\),
then
\[
    0<\lambda(\gamma(T))
    \leq\sum_{j=1}^N\lambda(B_j)
    \lesssim N\sigma(\delta)^s.
\]
Hence
\[
    N(\gamma(T);\delta)\gtrsim\sigma(\delta)^{-s}.
\]

For the upper bound, the adapted Taylor expansion and the comparison
between parameter length and arclength give constants \(r_0>0\)
and \(C_2<\infty\) such that there are adapted open boxes
\(B(t_0,r)\) satisfying
\[
    \gamma(T\cap[t_0-r,t_0+r])\subset B(t_0,r),
    \qquad l_i(B(t_0,r))=C_2r^i
\]
for \(t_0\in T\),
\(0<r<\min\{1,r_0/M_\gamma'\}\), and \(1\leq i\leq n\).
Again, the side lengths are in decreasing order because \(r<1\).
Choose
\[
    0<\eta<\min\{1,\operatorname{diam} T,r_0/M_\gamma'\}
\]
so small that \(C_2\eta^i\leq1\) for \(1\leq i\leq n\),
and put \(r=\eta\sigma(\delta)\).

Choose a maximal \(r\)-separated set
\(\{t_1,\ldots,t_L\}\subset T\). The intervals
\([t_j-r,t_j+r]\) cover \(T\), whereas the intervals
\([t_j-r/3,t_j+r/3]\) are pairwise disjoint.
The lower Ahlfors bound therefore gives
\[
    Lc_T\left(\frac r3\right)^s
    \leq\sum_{j=1}^L
    \mu\bigl(T\cap[t_j-r/3,t_j+r/3]\bigr)
    \leq\mu(T).
\]
Thus \(L\lesssim r^{-s}\asymp\sigma(\delta)^{-s}\).
Moreover,
\[
    l_i(B(t_j,r))
    \leq C_2r^i
    =C_2\eta^i\sigma(\delta)^i
    \leq\delta_i,
    \qquad 1\leq i\leq n,\quad 1\leq j\leq L.
\]
These boxes belong to \(\mathcal{R}_n(\delta)\) and cover
\(\gamma(T)\), so
\[
    N(\gamma(T);\delta)
    \leq L\lesssim\sigma(\delta)^{-s}.
\]
\end{proof}

\begin{theorem}
\label{thm:fractal-parameter-prints}
Under the hypotheses of
Proposition~\ref{prop:finite-type-fractal-parameter},
\(\gamma(T)\) is uniformly profile-regular, with
\begin{equation}\label{eq:fractal-parameter-profile}
    \rho_{\gamma(T)}(a)
    =s\max_{1\leq i\leq n}\frac{a_i}{i},
    \qquad a\in\mathcal K_n.
\end{equation}
Its lower and upper prints are
\begin{equation}\label{eq:fractal-parameter-lower-print}
    \underline{\mathrm{P}}(\gamma(T))
    =\left\{\alpha\in[0,\infty)^n:
    \sum_{i=1}^n i\alpha_i\leq s\right\}
\end{equation}
and
\begin{equation}\label{eq:fractal-parameter-upper-print}
\begin{aligned}
    \overline{\mathrm{P}}(\gamma(T))
    =\left\{\alpha\in[0,\infty)^n:S_1(\alpha)\leq s\right\}
    \cup\bigcup_{k=2}^n
    \left\{\alpha\in[0,\infty)^n:S_k(\alpha)<\frac{s}{k}\right\}.
\end{aligned}
\end{equation}
In particular,
\begin{equation}\label{eq:fractal-parameter-recovery}
    s=\rho_{\gamma(T)}(1,\ldots,1).
\end{equation}
\end{theorem}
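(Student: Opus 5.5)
The argument parallels the proof of Theorem~\ref{thm:finite-type-curve-prints}, with the factor \(s\) carried through. First, substitute \(\delta=e^{-ta}\) into \eqref{eq:fractal-parameter-covering-law} from Proposition~\ref{prop:finite-type-fractal-parameter}. Since \(\min_i e^{-ta_i/i}=e^{-t\max_i a_i/i}\), this gives
\[
 N(\gamma(T);e^{-ta})\asymp e^{ts\max_i a_i/i},\qquad a\in\mathcal K_n,\ t>0,
\]
with constants independent of \(a\) and \(t\). Hence the profile exists and equals \eqref{eq:fractal-parameter-profile}. The uniform two-sided estimate \eqref{eq:uniform-profile-entropy} holds, so \(\gamma(T)\) is uniformly profile-regular, and Proposition~\ref{thm:profile-variational} applies.

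For the lower print, use \eqref{eq:lower-print-duality}. If \(\sum_i i\alpha_i\leq s\), then
\[
 \alpha\cdot a=\sum_i(i\alpha_i)(a_i/i)\leq s\max_j a_j/j .
\]
Conversely, testing with \(a=(1,2,\ldots,n)\in\mathcal K_n\), where \(\rho_{\gamma(T)}(a)=s\), forces \(\sum_i i\alpha_i\leq s\). This gives \eqref{eq:fractal-parameter-lower-print}.

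For the upper print, use \eqref{eq:upper-print-duality} with
\[
 \Psi_\alpha(a)=\alpha\cdot a-s\max_i a_i/i .
\]
There are two sufficiency cases. If \(S_1(\alpha)\leq s\), evaluate at \(a=(1,\ldots,1)\), where \(\max_i a_i/i=1\); this gives \(\Psi_\alpha\leq0\). If \(S_k(\alpha)<s/k\) for some \(k\geq2\), take \(a=(1,\ldots,1,R,\ldots,R)\) with \(R\geq k\) in positions \(k,\ldots,n\). Then \(\max_i a_i/i=R/k\), and \(\Psi_\alpha(a)=\sum_{i<k}\alpha_i+R(S_k(\alpha)-s/k)\to-\infty\) as \(R\to\infty\).

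For necessity, assume \(S_1(\alpha)>s\) and \(S_k(\alpha)\geq s/k\) for all \(k\geq2\). Write \(a\in\mathcal C_n\) via increments \(\lambda_k=a_k-a_{k-1}\geq0\). Summation by parts gives
\[
 \alpha\cdot a=S_1(\alpha)+\sum_{k\geq2}\lambda_kS_k(\alpha)\geq S_1(\alpha)+s\sum_{k\geq2}\lambda_k/k .
\]
The bound \(a_i/i\leq 1+\sum_{k\geq2}\lambda_k/k\) still holds, because \((1+\sum_{k\leq i}\lambda_k)/i\leq 1/i+\sum_{k\leq i}\lambda_k/k\) when \(k\leq i\). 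Multiplying this bound by \(s\) yields \(\inf_{\mathcal C_n}\Psi_\alpha\geq S_1(\alpha)-s>0\). This proves \eqref{eq:fractal-parameter-upper-print}.

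Finally, \eqref{eq:fractal-parameter-recovery} is just the profile formula evaluated at \((1,\ldots,1)\). There is no real obstacle, since the covering law is already uniform in eccentricity. The only point needing care is the elementary inequality \(a_i/i\leq1+\sum_{k=2}^n\lambda_k/k\) in the necessity direction, which I would verify directly from \(\lambda_k\geq0\) and \(k\leq i\).
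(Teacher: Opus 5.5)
Your proposal is correct and follows the paper's proof. You substitute \(\delta=e^{-ta}\) into the covering law of Proposition~\ref{prop:finite-type-fractal-parameter} to obtain uniform profile regularity, then apply Proposition~\ref{thm:profile-variational} with the same test vectors and summation-by-parts bound used in Theorem~\ref{thm:finite-type-curve-prints}. The only cosmetic difference is that the paper factors out \(s\) and reuses the \(s=1\) calculation for \(\alpha/s\), whereas you carry \(s\) through each inequality directly.
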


\begin{proof}
Set \(\delta=e^{-ta}\) in
\eqref{eq:fractal-parameter-covering-law} to obtain
\[
    N(\gamma(T);e^{-ta})
    \asymp e^{ts\max_i a_i/i},
    \qquad a\in\mathcal K_n,\quad t>0,
\]
with constants independent of \(a\) and \(t\). Hence \(\gamma(T)\) is
uniformly profile-regular, with profile \eqref{eq:fractal-parameter-profile}.

Since \(s>0\),
\[
    \alpha\cdot a-\rho_{\gamma(T)}(a)
    =s\left(\frac{\alpha}{s}\cdot a
             -\max_{1\leq i\leq n}\frac{a_i}{i}\right).
\]
Proposition~\ref{thm:profile-variational} and the calculation
in the proof of Theorem~\ref{thm:finite-type-curve-prints},
applied to \(\alpha/s\), give
\eqref{eq:fractal-parameter-lower-print} and
\eqref{eq:fractal-parameter-upper-print}.
Evaluating \eqref{eq:fractal-parameter-profile} at
\((1,\ldots,1)\) gives \eqref{eq:fractal-parameter-recovery}.
\end{proof}

\subsection{Higher-dimensional spheres}
\label{sec:high-dimensional-spheres}

For spheres, the relevant local estimate concerns the intersection
of a rectangle with a thin quadratic slab.

For \(m\geq2\) and \(\delta\in\Delta_{m+1}\), put
\begin{equation}\label{eq:hd-correct-curvature-scale}
        \Gamma_m(\delta)
        =\min\left\{
        \prod_{i=1}^m\delta_i,\,
        \delta_{m+1}^{1/2}\prod_{i=2}^m\delta_i,\,
        \delta_{m+1}\prod_{i=3}^m\delta_i
        \right\}.
\end{equation}
Empty products are interpreted as \(1\). The third term in
\eqref{eq:hd-correct-curvature-scale} corresponds to the annular
estimate in the following lemma.

\begin{lemma}\label{lem:quadratic-slab-volume}
Let \(m\geq2\), \(s_1\geq\cdots\geq s_m>0\), and \(h>0\), and set
\(R_s=\prod_{i=1}^m[-s_i,s_i]\). Then
\begin{equation}\label{eq:quadratic-slab-volume}
        \sup_{c\in\mathbb{R}}
        \left|\{u\in R_s:c\leq |u|^2\leq c+h\}\right|
        \asymp_m
        \min\left\{
        \prod_{i=1}^m s_i,\,
        h^{1/2}\prod_{i=2}^m s_i,\,
        h\prod_{i=3}^m s_i
        \right\}.
\end{equation}
\end{lemma}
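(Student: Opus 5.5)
The plan is to prove the upper bound for every \(c\) by slicing in the first coordinates. The lower bound then comes from an explicit choice of \(c\) in each of three regimes of \(h\), determined by where \(h\) sits relative to \(s_2^2\le s_1^2\). Throughout, write \(P=\prod_{i=1}^m s_i\) and \(E_c=\{u\in R_s:c\le|u|^2\le c+h\}\).

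\emph{Upper bound.} The bound \(|E_c|\le 2^mP\) is trivial. For the second term, fix \((u_2,\ldots,u_m)\) and put \(c'=c-\sum_{i\ge2}u_i^2\). The fibre \(\{u_1:c'\le u_1^2\le c'+h\}\) consists of at most two intervals. Since \(x\mapsto\sqrt{x}\) on \([0,\infty)\) satisfies \(\sqrt{x+h}-\sqrt{x}\le\sqrt h\), the fibre has length at most \(2\sqrt h\). Fubini then gives \(|E_c|\le 2\sqrt h\prod_{i\ge2}2s_i\). For the third term, fix \((u_3,\ldots,u_m)\) instead. The \((u_1,u_2)\)-fibre is contained in a planar annulus \(\{c''\le u_1^2+u_2^2\le c''+h\}\), whose area is at most \(\pi h\) because in polar coordinates the area element is \(\pi\,d(r^2)\). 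Fubini gives \(|E_c|\le\pi h\prod_{i\ge3}2s_i\). Taking the minimum of the three bounds proves the upper estimate uniformly in \(c\).

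\emph{Lower bound.} I will first identify which term realises the minimum in each regime; this uses only \(s_1\ge s_2\).
\begin{itemize}
\item If \(h\ge s_1^2\), then the first term is the minimum up to constants, since \(h^{1/2}\ge s_1\) and \(h\ge s_1s_2\).
\item If \(s_2^2\le h\le s_1^2\), the second term is the minimum. Here \(h^{1/2}\le s_1\) and \(h^{1/2}\le h/s_2\).
\item If \(h\le s_2^2\), the third term is the minimum, because \(h\le h^{1/2}s_2\le s_1s_2\).
\end{itemize}

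In the first two regimes take \(c=0\). Then \(E_0\) contains the box
\[
\Bigl\{|u_1|\le \min\{s_1,\sqrt h\}/\sqrt m,\ |u_i|\le \min\{s_i,\sqrt h\}/\sqrt m\ (i\ge2)\Bigr\}.
\]
In the second regime \(s_i\le s_2\le\sqrt h\) for \(i\ge2\), so this box has volume \(\gtrsim_m \min\{s_1,\sqrt h\}\prod_{i\ge2}s_i\). That is \(\gtrsim P\) in the first regime and \(\gtrsim\sqrt h\prod_{i\ge2}s_i\) in the second.

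In the third regime, if \(s_2^2/4<h\le s_2^2\), the choice \(c=0\) again gives \(\gtrsim s_2\prod_{i\ge2}s_i\gtrsim h\prod_{i\ge3}s_i\). If \(h\le s_2^2/4\), take \(c=s_2^2/4\) and restrict to \(|u_i|\le s_i/(4\sqrt m)\) for \(i\ge3\). Then \(\Sigma=\sum_{i\ge3}u_i^2\le s_2^2/16\). The \((u_1,u_2)\)-fibre is therefore the full annulus with \(r^2\in[c-\Sigma,c-\Sigma+h]\), which has area \(\pi h\). Its outer radius is at most \(s_2/\sqrt2\), so the annulus lies in the disc of radius \(s_2\). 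That disc is contained in \([-s_1,s_1]\times[-s_2,s_2]\). Integrating over the restricted \(u_i\) gives \(|E_c|\gtrsim_m h\prod_{i\ge3}s_i\).

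The main obstacle is bookkeeping rather than analysis. I must check that in each regime the chosen set actually lies in \(R_s\); in the annular case this requires the annulus to stay inside the disc of radius \(s_2\), which is why the subcase split at \(s_2^2/4\) is needed. I must also check that the term realised by the construction matches the minimum up to constants depending only on \(m\).
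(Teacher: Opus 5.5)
Your proof is correct and follows essentially the same route as the paper. The upper estimate uses the same Fubini slicing with the one-dimensional bound \(2h^{1/2}\) and the annulus bound \(\pi h\); the lower estimate uses the same regimes (\(h\) compared with \(s_1^2\) and \(s_2^2\)), either a box near the origin or a full annulus inside \([-s_2,s_2]^2\). The differences are cosmetic: you take \(c=0\) with a box shrunk by \(\sqrt m\) where the paper pigeonholes \([0,mh]\) into \(m\) slabs, and you split the annular regime at \(h=s_2^2/4\) where the paper instead truncates the slab width to \(\tau=\min\{h,s_2^2/64\}\) with \(c_0=s_2^2/16\).
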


\begin{proof}
For the upper bound, let \(Q=I_1\times\cdots\times I_m\), where
\(|I_i|=2s_i\), and set
\(E_c(Q)=\{u\in Q:c\leq|u|^2\leq c+h\}\). For every \(v\in\mathbb{R}\),
\[
 \begin{aligned}
 \left|\{u_1\in\mathbb{R}:v\leq u_1^2\leq v+h\}\right|&\leq2h^{1/2},\\
 \left|\{(u_1,u_2)\in\mathbb{R}^2:v\leq u_1^2+u_2^2\leq v+h\}\right|
 &\leq\pi h.
 \end{aligned}
\]
Integration in the remaining coordinates, together with the volume
bound, gives
\[
 |E_c(Q)|\lesssim_m\min\left\{
 \prod_{i=1}^m s_i,\,
 h^{1/2}\prod_{i=2}^m s_i,\,
 h\prod_{i=3}^m s_i\right\}.
\]

For the lower bound when \(s_2\leq h^{1/2}\), take
\[
 A=\begin{cases}
 R_s,&s_1\leq h^{1/2},\\[1mm]
 [-h^{1/2},h^{1/2}]\times\displaystyle\prod_{i=2}^m[-s_i,s_i],
       &s_1\geq h^{1/2}\geq s_2.
 \end{cases}
\]
Since \(|u|^2\leq mh\) on \(A\), a partition of \([0,mh]\) into
\(m\) intervals of length \(h\) gives
\[
 \sup_c|E_c(R_s)|\geq\frac{|A|}{m}
 \asymp_m
 \begin{cases}
 \displaystyle\prod_{i=1}^m s_i,&s_1\leq h^{1/2},\\[1mm]
 h^{1/2}\displaystyle\prod_{i=2}^m s_i,
       &s_1\geq h^{1/2}\geq s_2.
 \end{cases}
\]

Suppose finally that \(h^{1/2}\leq s_2\). Choose \(\eta=\eta_m>0\)
such that
\[
 |u''|^2\leq s_2^2/64
 \quad\text{for }u''\in Q''=\prod_{i=3}^m[-\eta s_i,\eta s_i],
\]
and put
\[
 c_0=\frac{s_2^2}{16},\qquad
 \tau=\begin{cases}
 h,&h\leq s_2^2/64,\\
 s_2^2/64,&h>s_2^2/64.
 \end{cases}
\]
For every \(u''\in Q''\),
\[
 \frac{3s_2^2}{64}\leq c_0-|u''|^2
 \leq c_0-|u''|^2+\tau\leq\frac{5s_2^2}{64}.
\]
Thus the annulus
\[
 c_0-|u''|^2\leq u_1^2+u_2^2\leq c_0-|u''|^2+\tau
\]
lies in \([-s_2,s_2]^2\subseteq[-s_1,s_1]\times[-s_2,s_2]\), and
has area \(\pi\tau\geq\pi h/64\). Since \(\tau\leq h\), integration
over \(Q''\) gives
\[
 |E_{c_0}(R_s)|\geq\frac{\pi h}{64}|Q''|
 \gtrsim_m h\prod_{i=3}^m s_i.
\]
This completes the lower bound.
\end{proof}

Applying the slab estimate in coordinate graph patches bounds the
surface measure contained in one box.

\begin{lemma}\label{lem:hd-sphere-box-capacity}
Let \(m\geq2\). There are constants \(C<\infty\) and \(r_*>0\), depending
only on \(m\), such that every box \(B\subset\mathbb{R}^{m+1}\) with ordered side
lengths \(0<l_{m+1}\leq\cdots\leq l_1\leq r_*\) satisfies
\begin{equation}\label{eq:hd-sphere-box-capacity}
        \mathcal{H}^m(S^m\cap B)
        \leq C\min\left\{
        \prod_{i=1}^m l_i,\,
        l_{m+1}^{1/2}\prod_{i=2}^m l_i,\,
        l_{m+1}\prod_{i=3}^m l_i
        \right\}.
\end{equation}
\end{lemma}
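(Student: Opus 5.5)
The plan is to reduce to Lemma~\ref{lem:quadratic-slab-volume} by writing $S^m$ near a point of $B$ as a graph over its tangent plane and transferring surface measure to Lebesgue measure in $m$ parameters. First dispose of the trivial bound: since $\mathcal{H}^m(S^m\cap B)$ is controlled by the measure of the projection of $S^m\cap B$ onto a suitable hyperplane (times a bounded Jacobian), the term $\prod_{i=1}^m l_i$ will come out of the same graph computation, so I treat all three terms together. Choose $r_*$ small (depending on $m$) so that $\operatorname{diam} B\leq\sqrt{m+1}\,r_*$ and $S^m\cap B$ lies in a cap of angular radius at most $1/10$ around some point $p\in S^m\cap B$ (assume nonempty).

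The key step is the change of variables. Instead of using the tangent plane at $p$, use the box frame: let $f_1,\ldots,f_{m+1}$ be the edge directions of $B$ with $f_{m+1}$ the shortest. The natural identity is exact and avoids Taylor errors: for $x\in\mathbb{R}^{m+1}$ write $x=x_0+\sum_i u_if_i$ with $x_0$ the centre of $B$; then $|x|^2=1$ becomes a quadratic condition in $u$. Project $S^m\cap B$ onto the hyperplane $W=\operatorname{span}\{f_1,\ldots,f_m\}$. Two cases. If the normal $p$ makes angle at most $\pi/4$ with $f_{m+1}$ (up to sign), then over $B$ the sphere is a graph $u_{m+1}=g(u')$ over $W$ with bounded Jacobian, so $\mathcal{H}^m(S^m\cap B)\lesssim|\{u'\in R:\ g(u')\in I\}|$, where $R=\prod_{i\le m}[-l_i/2,l_i/2]$ and $|I|\le l_{m+1}$. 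Now $1-|x_0'+u'|^2 - \ldots$: precisely, $x_{m+1}$-coordinate satisfies $(c+u_{m+1})^2=1-|y+u'|^2$ where $y\in W$ is the projection of $x_0$; since $c+u_{m+1}$ is bounded below by a constant in this case, the condition $u_{m+1}\in I$ forces $|y+u'|^2$ into an interval of length $\lesssim l_{m+1}$. After translating $u'\mapsto u'+y$, this is exactly the set in Lemma~\ref{lem:quadratic-slab-volume} for a box (translated, which only helps since the lemma's upper bound was proved for arbitrary $Q=I_1\times\cdots\times I_m$) with $h\asymp l_{m+1}$, giving the claimed minimum. In the other case the normal is within $\pi/4$ of $W$; then pick $k\le m$ such that the normal has a component $\gtrsim_m 1$ along $f_k$ and write the sphere as a graph over $\operatorname{span}\{f_i:i\neq k\}$. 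The coordinate $u_k$ ranges in an interval of length $l_k$, and the same algebra gives a slab of thickness $h\lesssim l_k$ in variables with sides $\{l_i\}_{i\neq k}$ including $l_{m+1}$.

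The main obstacle is this second case: one must check that the resulting bound $\min\{\prod_{i\ne k}l_i,\ l_k^{1/2}\prod_{i\ne k,\,i\ne j_1}l_i,\ldots\}$ is still dominated by the right-hand side of \eqref{eq:hd-sphere-box-capacity}. The crude volume term $\prod_{i\neq k}l_i=l_{m+1}\prod_{i\le m,i\neq k}l_i\le l_{m+1}\prod_{i=2}^m l_i$, which is bounded by $l_{m+1}^{1/2}\prod_{i=2}^ml_i$ and by $\prod_{i=1}^m l_i$ only partially; I would instead directly bound by $l_{m+1}\prod_{i=3}^m l_i\cdot\min(l_1,l_2)$-type products and compare each of the three target terms, using $l_{m+1}\le l_i\le r_*\le1$. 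If this comparison fails for some ordering, the fallback is to apply Lemma~\ref{lem:quadratic-slab-volume}'s annular/strip estimates with the roles of the two largest remaining sides chosen adaptively. Finally, $C$ depends only on $m$ through $r_*$, the Jacobian bounds, and the constants of Lemma~\ref{lem:quadratic-slab-volume}.
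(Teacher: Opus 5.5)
Your strategy is the same as the paper's. In the frame of \(B\), you write the relevant part of \(S^m\cap B\) as a graph over all coordinates but the \(q\)-th, note that \(|y|^2=1-x_q^2\) is then confined to an interval of length at most \(2l_q\), and apply Lemma~\ref{lem:quadratic-slab-volume} with \(h\asymp l_q\) and sides given by the decreasing rearrangement \(b^{(q)}\) of \(\{l_i\}_{i\neq q}\). Your first case is \(q=m+1\), which gives the right-hand side directly; your second is \(q=k\leq m\). The paper does not localise to a cap. It covers \(S^m\cap B\) by the pieces \(\{\sigma x_q\geq\kappa\}\), with \(\kappa=(2\sqrt{m+1})^{-1}\), and sums over \((q,\sigma)\). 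If you keep the localisation, a cap of radius \(1/10\) is too large for big \(m\): a normal component \(\geq(2m)^{-1/2}\) along \(f_k\) at \(p\) is not preserved across such a cap, so the radius must be \(\lesssim m^{-1/2}\).

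The genuine gap is that you never carry out the comparison for \(k\leq m\), and that comparison is the entire content of the lemma beyond the slab estimate. The one inequality you assert is false. For \(2\leq k\leq m\) one has \(\prod_{i\neq k}l_i=l_{m+1}l_1\prod_{2\leq i\leq m,\,i\neq k}l_i\geq l_{m+1}\prod_{i=2}^ml_i\), the reverse of what you claim. In fact the volume term alone cannot suffice. Its ratio to \(l_{m+1}^{1/2}\prod_{i=2}^ml_i\) is \(l_1l_{m+1}^{1/2}/l_k\), which is unbounded (e.g. \(l_1\) fixed and \(l_{m+1}=l_k^{3/2}\to0\)). For \(k\geq3\), its ratio to \(l_{m+1}\prod_{i=3}^ml_i\) is \(l_1l_2/l_k\), also unbounded.

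The comparison closes only if you match each target term with a different term of the slab bound, using \(l_{m+1}\leq l_k\leq r_*\leq1\).
\begin{itemize}
\item Against \(\prod_{i=1}^ml_i\): the volume term equals \((l_{m+1}/l_k)\prod_{i=1}^ml_i\).
\item Against \(l_{m+1}^{1/2}\prod_{i=2}^ml_i\): when \(k=1\), use the volume term \(l_{m+1}\prod_{i=2}^ml_i\). Otherwise use the square-root term; since \(b_1^{(k)}=l_1\), it equals \((l_{m+1}/l_k)^{1/2}\,l_{m+1}^{1/2}\prod_{i=2}^ml_i\).
\item Against \(l_{m+1}\prod_{i=3}^ml_i\): when \(k\in\{1,2\}\), use the square-root term, which equals \(l_k^{1/2}\,l_{m+1}\prod_{i=3}^ml_i\). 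Otherwise use the annular term \(l_k\prod_{i=3}^mb_i^{(k)}\), which equals \(l_{m+1}\prod_{i=3}^ml_i\) exactly.
\end{itemize}
This is the case table in the paper's proof. With it, and with constants absorbing the factor in \(h=2l_k\), your argument is complete.
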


\begin{proof}
By rotational invariance, we may write \(B=\prod_{i=1}^{m+1}I_i\), with
\(|I_i|=l_i\). Set \(d=l_{m+1}\), choose \(r_*\leq1\), and put
\(\kappa=(2\sqrt{m+1})^{-1}\). The sets
\[
 E_{q,\sigma}=\{x\in S^m\cap B:\sigma x_q\geq\kappa\},
 \qquad 1\leq q\leq m+1,\quad\sigma\in\{-1,1\},
\]
cover \(S^m\cap B\). Each is a graph
\(x_q=\sigma(1-|y|^2)^{1/2}\), \(y=(x_i)_{i\neq q}\), with
Jacobian \(|x_q|^{-1}\leq\kappa^{-1}\). Hence
\[
 \mathcal{H}^m(E_{q,\sigma})\lesssim_m |Y_{q,\sigma}|,
\]
where
\[
 Y_{q,\sigma}=\left\{y\in\prod_{i\neq q}I_i:
 \sigma\sqrt{1-|y|^2}\in I_q,\ \sqrt{1-|y|^2}\geq\kappa\right\}.
\]
On this set, \(|y|^2=1-x_q^2\) lies in an interval of length at most
\(2l_q\). Let \(b_1^{(q)}\geq\cdots\geq b_m^{(q)}\) be the decreasing
rearrangement of the lengths \(l_i\) with \(i\neq q\).
Lemma~\ref{lem:quadratic-slab-volume} gives
\[
 \mathcal{H}^m(E_{q,\sigma})\lesssim_m T_q,
 \qquad
 T_q=\min\left\{
 \prod_{i=1}^m b_i^{(q)},\,
 l_q^{1/2}\prod_{i=2}^m b_i^{(q)},\,
 l_q\prod_{i=3}^m b_i^{(q)}\right\}.
\]
For the first comparison below we use the volume term. For the
second we use the volume term when \(q=1\) and the square-root
term otherwise; for the third we use the square-root term when
\(q=1,2\) and the last term otherwise. Thus
\begin{align*}
 T_q&\leq\left(\prod_{i=1}^m l_i\right)
 \begin{cases}d/l_q,&1\leq q\leq m,\\1,&q=m+1,\end{cases}\\[1mm]
 T_q&\leq\left(d^{1/2}\prod_{i=2}^m l_i\right)
 \begin{cases}
 d^{1/2},&q=1,\\
 (d/l_q)^{1/2},&2\leq q\leq m,\\
 1,&q=m+1,
 \end{cases}\\[1mm]
 T_q&\leq\left(d\prod_{i=3}^m l_i\right)
 \begin{cases}
 l_q^{1/2},&q=1,2,\\
 1,&3\leq q\leq m,\\
 1,&q=m+1.
 \end{cases}
\end{align*}
Every factor in the cases above is at most \(1\), since
\(d\leq l_q\leq1\).
Summing over \((q,\sigma)\) proves
\eqref{eq:hd-sphere-box-capacity}.
\end{proof}

\begin{proposition}
\label{prop:hd-sphere-covering-law}
Let \(m\geq2\). There are constants \(0<c<C<\infty\), depending only on
\(m\), such that
\begin{equation}\label{eq:hd-sphere-covering-law}
        c\,\Gamma_m(\delta)^{-1}
        \leq N(S^m;\delta)
        \leq C\,\Gamma_m(\delta)^{-1}
\end{equation}
for every \(\delta\in\Delta_{m+1}\) with \(\delta_1\) sufficiently small.
\end{proposition}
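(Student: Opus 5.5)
The lower bound will follow the mass-counting argument used for curves in Proposition~\ref{prop:finite-type-curve-profile}. If \(\delta_1\leq r_*\), every \(B\in\mathcal R_{m+1}(\delta)\) has ordered sides \(l_i\leq\delta_i\leq r_*\). Lemma~\ref{lem:hd-sphere-box-capacity} then gives \(\mathcal H^m(S^m\cap B)\leq C\Gamma_m(l)\). Each of the three terms in \eqref{eq:hd-correct-curvature-scale} is coordinatewise non-decreasing, so \(\Gamma_m(l)\leq\Gamma_m(\delta)\). For any admissible cover \(B_1,\dots,B_N\) of \(S^m\) we therefore get \(\mathcal H^m(S^m)\leq CN\Gamma_m(\delta)\), which is the lower inequality in \eqref{eq:hd-sphere-covering-law}.

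For the upper bound I would localise to a patch and build explicit boxes, one construction for each regime of the minimum. By rotational symmetry it suffices to cover a cap near the north pole of fixed angular size, written as the graph \(x_{m+1}=\sqrt{1-|y|^2}\), \(y\in\mathbb R^m\), \(|y|\leq1/2\). The three regimes are as follows.

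\emph{(a) Flat regime, \(\delta_{m+1}\gtrsim\delta_1^2\).} Tile a neighbourhood of the patch by boxes tangent to the sphere. A tangential \(\delta_1\times\cdots\times\delta_m\) piece deviates from its tangent plane by \(O(\delta_1^2)\leq\delta_{m+1}\) after adjusting constants. This uses \(\asymp(\prod_{i\leq m}\delta_i)^{-1}\) boxes.

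\emph{(b) Parabolic regime, \(\delta_2^2\lesssim\delta_{m+1}\lesssim\delta_1^2\).} Use tangent boxes with tangential sides \(\delta_{m+1}^{1/2},\delta_2,\dots,\delta_m\), normal side \(\delta_{m+1}\), and replace the long side \(\delta_1\) by \(\delta_{m+1}^{1/2}\). The normal deviation is \(O(\delta_{m+1})\) because all tangential extents are at most \(\delta_{m+1}^{1/2}\). This costs \(\asymp\delta_{m+1}^{-1/2}\prod_{i=2}^m\delta_i^{-1}\) boxes.

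\emph{(c) Annular regime, \(\delta_{m+1}\lesssim\delta_2^2\).} Here the long sides \(\delta_1,\delta_2\) are not tangent to the sphere. They span a plane meeting the sphere in a circle of radius \(\rho\leq1\), and a box can contain a piece of a thin annulus. Cover \(S^m\) by sets of the form \(\{x: x''\in Q'',\ (x_1,x_2)\in\text{annulus of width }\delta_{m+1}\}\). Here \(x''=(x_3,\dots,x_{m+1})\) ranges over a cube of sides \(\delta_3,\dots,\delta_m\) in \(m-1\) coordinates, and one of those coordinates is effectively replaced by the radial direction. Equivalently, slice \(S^m\) by a \((m-2)\)-parameter family of 2-planes, foliating it by circles. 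I would first take boxes with sides \(\delta_3,\dots,\delta_m\) in the \(m-2\) directions normal to the circles within the sphere. Then I would cover each circle-neighbourhood strip of width \(\delta_{m+1}\) in the remaining normal direction. A chord of a unit-scale circle of length \(\ell\) has sagitta \(\asymp\ell^2\), so each strip needs \(O(1)\) boxes with sides \(\delta_1,\delta_2\) in the plane of the circle only if \(\delta_2\gtrsim\) the sagitta scale. In general it needs \(\asymp\delta_{m+1}^{-1}\cdot\)(arc length per box)\(^{-1}\)-type counts, and the resulting total should be \(\asymp\delta_{m+1}^{-1}\prod_{i=3}^m\delta_i^{-1}\).

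In each regime the count matches the active term of \(\Gamma_m(\delta)^{-1}\). Since \(\Gamma_m\) is the minimum of the three terms, and the regimes are exactly where each term is minimal up to constants, the upper bound follows.

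The main obstacle is regime (c). There the box must be oriented with its two long sides spanning a plane transverse to the sphere, so that it captures a whole arc of a small circle thickened by \(\delta_{m+1}\). Getting the bookkeeping right needs care with the curvature of the circles (radius \(\rho\)) and with the remaining \(m-2\) directions. I would try first to make the lower-bound geometry of Lemma~\ref{lem:quadratic-slab-volume} explicit: its annulus construction, with \(u_1,u_2\) spanning a disc of radius \(\asymp s_2\), suggests taking circles of radius \(\asymp\delta_2\) near a point, i.e.\ \(2\)-planes cutting off small caps. One box then contains the \(\delta_{m+1}\)-neighbourhood of such a small circle's arc. I would check that \(O(1)\) boxes of sides \(\delta_2,\delta_2,\delta_3,\dots,\delta_m,\delta_{m+1}\) cover the corresponding shell piece. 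Covering \(S^m\) then needs \(\asymp(\delta_{m+1}\prod_{i\geq3}\delta_i)^{-1}\) such pieces, by comparing the surface measure of each shell piece, \(\asymp\delta_{m+1}\prod_{i\ge3}\delta_i\), with \(\mathcal H^m(S^m)\) and using a bounded-overlap selection.
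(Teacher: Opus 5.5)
Your lower bound is the same as the paper's. Your regimes (a) and (b) are the paper's first two choices of tangential scales, $(\delta_1,\dots,\delta_m)$ and $(\delta_{m+1}^{1/2},\delta_2,\dots,\delta_m)$.

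The gap is regime (c). This is the only place where the annular term $\delta_{m+1}\prod_{i\ge3}\delta_i$ has to be produced, and no cover is actually constructed there.

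Your first picture has the two long sides spanning a plane that cuts $S^m$ in a circle of radius $\asymp1$, and this is the wrong scale. If that circle is a great circle, for instance, the box meets $S^m$ in area at most about $\min\{\delta_1,\delta_2^{1/2}\}\,\delta_{m+1}\prod_{i\ge3}\delta_i$. The resulting count is then too large by an unbounded factor.

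Your second picture uses boxes with sides $\delta_2,\delta_2,\delta_3,\dots,\delta_m,\delta_{m+1}$ cutting circles of radius $\lesssim\delta_2$. These boxes have the right shape. However, the count comes from dividing $\mathcal H^m(S^m)$ by the area of one piece and invoking an unspecified bounded-overlap selection. Area comparison bounds the number of pieces only from below. An upper bound needs an explicit cover, and thin annular bands have no Vitali-type engulfing property that would supply one. Bands truncated at the edge of a patch need not even have area $\gtrsim\delta_{m+1}\prod_{i\ge3}\delta_i$.

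The missing idea is to slice each graph patch by its height above a tangent plane.
\begin{enumerate}
\item In a chart $X(u)=(u,f(u))$, take about $(\delta_2^2\delta_3\cdots\delta_m)^{-1}$ grid rectangles $Q$ with sides $c_0(\delta_2,\delta_2,\delta_3,\dots,\delta_m)$ and centres $u_0$.
\item Taylor's theorem gives $X(u)=X(u_0)+DX(u_0)(u-u_0)+R(u)e_{m+1}$ with $|R|\lesssim c_0^2\delta_2^2$ on $Q$.
\item Cut the range of $R$ into $O(1+\delta_2^2/\delta_{m+1})$ intervals $J_j$ of length $c_1\delta_{m+1}$.
\item Each $X(Q\cap R^{-1}(J_j))$ lies within $c_1\delta_{m+1}$ of a translate of $DX(u_0)(Q-u_0)$. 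By the singular-value bound, this parallelepiped sits in a tangent-plane box with ordered sides $\lesssim c_0(\delta_2,\delta_2,\delta_3,\dots,\delta_m)$.
\item Hence, for $c_0,c_1$ small depending on $m$, each slice lies in one admissible box parallel to the tangent plane at $X(u_0)$.
\end{enumerate}
Multiplying the two counts gives $\lesssim(\delta_{m+1}\prod_{i\ge3}\delta_i)^{-1}$ when $\delta_{m+1}\le\delta_2^2$.

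These level sets are exactly your annular bands. The difference is that the boxes' long sides are parallel to the tangent plane at the grid point, not transverse to the sphere.

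The paper proves the single bound $N(X(Q);\delta)\lesssim1+s_1^2/\delta_{m+1}$ for every tangential scale vector $s$ with $\delta_{m+1}\le s_m\le\cdots\le s_1$ and $s_i\le\delta_i$. It then takes $s=(\delta_2,\delta_2,\delta_3,\dots,\delta_m)$ in regime (c). The same bound also handles (a) and (b) without your ``after adjusting constants'' step.
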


\begin{proof}
Write \(\Gamma=\Gamma_m(\delta)\). For \(\delta_1\) sufficiently small,
every admissible box \(B\) has ordered side lengths
\(s_i\leq\delta_i\), so Lemma~\ref{lem:hd-sphere-box-capacity} gives
\[
    \mathcal{H}^m(S^m\cap B)
    \lesssim_m
    \Gamma_m(s_1,\ldots,s_{m+1})
    \leq\Gamma.
\]
Since \(\mathcal{H}^m(S^m)>0\), subadditivity yields
\[
    N(S^m;\delta)\gtrsim_m\Gamma^{-1}.
\]

For the upper bound, cover \(S^m\) by finitely many graph patches
with parametrisations, after rotation,
\[
    X_\nu(u)=(u,f_\nu(u)),\qquad u\in D_\nu,
\]
on fixed bounded domains \(D_\nu\). Each \(X_\nu\) extends to a
neighbourhood of \(\overline{D_\nu}\). The number of patches and
the \(C^2\) bounds on the extended domains depend only on \(m\).

Fix a patch and suppress \(\nu\). Put \(d=\delta_{m+1}\), and let
\(Q\) be a parameter rectangle contained in the extended
chart domain and centred at \(u_0\), with side lengths \(c_0s_i\),
where
\[
    s_1\geq\cdots\geq s_m\geq d,
    \qquad s_i\leq\delta_i.
\]
Taylor's theorem gives
\[
    X(u)
    =X(u_0)+DX(u_0)(u-u_0)+R(u)e_{m+1},
    \qquad
    |R(u)|\leq C_m c_0^2s_1^2.
\]
The singular-value inequalities
\[
    \sigma_i\!\left(
        DX(u_0)\operatorname{diag}(s_1,\ldots,s_m)
    \right)
    \leq \|DX(u_0)\|s_i,
    \qquad 1\leq i\leq m,
\]
imply that \(DX(u_0)(Q-u_0)\) is contained in a tangent-plane
box \(T\) with ordered side lengths
\[
    t_1\geq\cdots\geq t_m>0,
    \qquad t_i\leq C_m c_0s_i.
\]

Choose \(0<c_1<1/2\) and then \(c_0>0\), depending only on \(m\),
such that
\[
    C_m c_0+2c_1<1.
\]
The remainder bound gives a cover of \(R(Q)\) by intervals
\[
    J_j=[b_j,b_j+c_1d],\qquad 1\leq j\leq M,
    \qquad
    M\leq C_m\left(1+\frac{s_1^2}{d}\right).
\]
Set \(Q_j=Q\cap R^{-1}(J_j)\). Then
\[
    X(Q_j)
    \subseteq
    X(u_0)+b_je_{m+1}+T
    +\{z\in\mathbb{R}^{m+1}:|z|\leq c_1d\}.
\]
In an orthonormal frame adapted to \(T\), the set on the right
is contained in a rectangular box with ordered side lengths
\[
    t_1+2c_1d,\ldots,t_m+2c_1d,2c_1d.
\]
Since \(s_i\leq\delta_i\) and \(d\leq\delta_i\),
\[
    t_i+2c_1d
    \leq(C_m c_0+2c_1)\delta_i<\delta_i
    \quad(1\leq i\leq m),
    \qquad
    2c_1d<d.
\]
The strict inequalities allow the containing boxes to be chosen
open. Thus \(X(Q_j)\subseteq B_j\) for some
\(B_j\in\mathcal R_{m+1}(\delta)\), and hence
\begin{equation}\label{eq:hd-sphere-chart-slicing}
    N(X(Q);\delta)
    \leq M
    \leq C_m\left(1+\frac{s_1^2}{d}\right),
\end{equation}
uniformly over the stated range of \(s_1,\ldots,s_m,d\).

Cover each \(D_\nu\) using a grid with side lengths \(c_0s_i\).
At most \(C_m/(s_1\cdots s_m)\) grid rectangles meet \(D_\nu\).
For sufficiently small \(\delta_1\), these rectangles lie in the
extended chart domains.
Summing \eqref{eq:hd-sphere-chart-slicing} over the rectangles
and patches gives
\[
    N(S^m;\delta)
    \lesssim_m
    \frac{1+s_1^2/d}{s_1\cdots s_m}.
\]

Choose the scales according to the term attaining \(\Gamma\):
\[
    (s_1,\ldots,s_m)=
    \begin{cases}
        (\delta_1,\delta_2,\ldots,\delta_m),
        &\Gamma=\delta_1\cdots\delta_m,\\[1mm]
        (d^{1/2},\delta_2,\ldots,\delta_m),
        &\Gamma=d^{1/2}\delta_2\cdots\delta_m,\\[1mm]
        (\delta_2,\delta_2,\delta_3,\ldots,\delta_m),
        &\Gamma=d\displaystyle\prod_{i=3}^m\delta_i.
    \end{cases}
\]
In the first case, \(\delta_1\leq d^{1/2}\); in the second,
\(\delta_2\leq d^{1/2}\leq\delta_1\); and in the third,
\(d^{1/2}\leq\delta_2\). The chosen scales therefore satisfy all
the required ordering and size conditions. The three cases
give, respectively,
\[
    \frac{1+s_1^2/d}{s_1\cdots s_m}
    \leq 2
    \begin{cases}
        (\delta_1\cdots\delta_m)^{-1},\\[1mm]
        (d^{1/2}\delta_2\cdots\delta_m)^{-1},\\[1mm]
        \displaystyle
        \left(d\prod_{i=3}^m\delta_i\right)^{-1}.
    \end{cases}
\]
Hence \(N(S^m;\delta)\lesssim_m\Gamma^{-1}\).
\end{proof}

The three terms in the covering law determine the linear forms
defining the sphere's profile. Set
\[
 p^{(0)}=(\underbrace{1,\ldots,1}_{m},0),\qquad
 p^{(1)}=(0,\underbrace{1,\ldots,1}_{m-1},1/2),\qquad
 p^{(2)}=(0,0,\underbrace{1,\ldots,1}_{m-1}),
\]
and let \(P=\operatorname{co}\{p^{(0)},p^{(1)},p^{(2)}\}\) be their
convex hull.

\begin{theorem}\label{cor:hd-sphere-prints}
The sphere \(S^m\), \(m\geq2\), is uniformly profile-regular, with
\begin{equation}\label{eq:hd-sphere-profile}
 \rho_{S^m}(a)=\max\left\{
 \sum_{i=1}^m a_i,\,
 \frac12a_{m+1}+\sum_{i=2}^m a_i,\,
 a_{m+1}+\sum_{i=3}^m a_i\right\},
 \qquad a\in\mathcal K_{m+1}.
\end{equation}
Its lower and upper prints are
\begin{equation}\label{eq:hd-sphere-lower-print}
\begin{aligned}
 \underline{\mathrm{P}}(S^m)=\bigl\{\alpha\in[0,\infty)^{m+1}:{}
 &\text{ there exists }p\in P\text{ such that}\\
 &S_k(\alpha)\leq S_k(p)\ (1\leq k\leq m+1)\bigr\}
\end{aligned}
\end{equation}
and
\begin{equation}\label{eq:hd-sphere-upper-print}
\begin{aligned}
 \overline{\mathrm{P}}(S^m)
 ={}&\{\alpha\in[0,\infty)^{m+1}:S_1(\alpha)\leq m\}\\
 &\cup\{\alpha\in[0,\infty)^{m+1}:S_2(\alpha)<m-\tfrac12\}\\
 &\cup\bigcup_{k=3}^{m+1}
 \{\alpha\in[0,\infty)^{m+1}:S_k(\alpha)<m-k+2\}.
\end{aligned}
\end{equation}
\end{theorem}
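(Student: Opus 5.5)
Substituting \(\delta=e^{-ta}\) into Proposition~\ref{prop:hd-sphere-covering-law} turns \(\Gamma_m(e^{-ta})^{-1}\) into \(\exp\) of \(t\) times the maximum of the three linear forms \(p^{(0)}\cdot a\), \(p^{(1)}\cdot a\), \(p^{(2)}\cdot a\). This gives the profile \eqref{eq:hd-sphere-profile}, with \(\rho_{S^m}(a)=\max_{p\in P}p\cdot a\). The constants are independent of \(a\) once \(ta_1\geq t_0\), so \(S^m\) is uniformly profile-regular. Proposition~\ref{thm:profile-variational} then reduces both prints to inequalities involving \(h_P(a)=\max_{p\in P}p\cdot a\) on \(\mathcal K_{m+1}\).

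For the lower print, \eqref{eq:lower-print-duality} gives \(\alpha\in\underline{\mathrm P}(S^m)\) iff \(\alpha\cdot a\leq h_P(a)\) for every \(a\in\mathcal K_{m+1}\). We write \(a\) through its increments \(u_1=a_1\), \(u_k=a_k-a_{k-1}\geq0\), exactly as in \eqref{eq:tail-summation-by-parts}, so that \(\gamma\cdot a=\sum_k u_kS_k(\gamma)\). Then \(a\mapsto(u_k)\) identifies \(\mathcal K_{m+1}\) with the cone \(u_1>0\), \(u_k\geq0\), and the condition becomes \(\sum u_kS_k(\alpha)\leq\max_{p\in P}\sum u_kS_k(p)\) for all such \(u\). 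Let \(\widehat P=\{(S_k(p))_k:p\in P\}\), a convex compact set. By the separation theorem, the inequality for all \(u\) in the closed orthant is equivalent to \((S_k(\alpha))_k\in\widehat P-[0,\infty)^{m+1}\); the closed orthant suffices by continuity. This is exactly the description \eqref{eq:hd-sphere-lower-print}. The main obstacle is the separation step. One needs \(\widehat P-[0,\infty)^{m+1}\) to be closed and convex, which holds because it is compact plus a closed cone. One also needs every separating functional to be nonnegative, which holds because the set is downward closed.

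For the upper print, \eqref{eq:upper-print-duality} gives \(\alpha\in\overline{\mathrm P}\) iff \(\inf_{a\in\mathcal C_{m+1}}(\alpha\cdot a-h_P(a))\leq0\). We follow the proof of Theorem~\ref{thm:finite-type-curve-prints}, with \(u_1=1\). For sufficiency, \(a=(1,\ldots,1)\) gives \(S_1(\alpha)-m\). Taking \(a=(1,R,\ldots,R)\) gives \(\alpha_1+R S_2(\alpha)-\max\{1+(m-1)R,\,(m-\tfrac12)R,\,(m-1)R\}\), so for \(R\) large the value is \(R(S_2(\alpha)-m+\tfrac12)+O(1)\to-\infty\) when \(S_2(\alpha)<m-\tfrac12\). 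Taking \(a=(1,\ldots,1,R,\ldots,R)\) with \(R\) starting at index \(k\geq3\) gives the slope \(S_k(\alpha)-\max_pS_k(p)\). Here \(\max_pS_k(p)=m-k+2\), attained at \(p^{(2)}\), since \(S_k(p^{(0)})=m-k+1\) and \(S_k(p^{(1)})=m-k+\tfrac32\).

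For necessity, assume \(S_1(\alpha)>m\), \(S_2(\alpha)\geq m-\tfrac12\) and \(S_k(\alpha)\geq m-k+2\) for \(k\geq3\). Using \(\max_pS_1(p)=m\) and \(\max_pS_2(p)=m-\tfrac12\) for \(k=2\), we have \(S_k(\alpha)\geq\max_pS_k(p)\) for every \(k\geq2\). Then for every \(p\in P\) and \(u\) with \(u_1=1\),
\[
\sum_ku_kS_k(\alpha)-\sum_ku_kS_k(p)\geq S_1(\alpha)-S_1(p)\geq S_1(\alpha)-m>0,
\]
which bounds the infimum below by a positive constant. This yields \eqref{eq:hd-sphere-upper-print}.
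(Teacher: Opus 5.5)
Your proposal is correct and follows the paper's proof. You obtain the profile by substituting into the covering law, you get the lower print by convex separation against the tail cone (your $S_k$-coordinates are just the linear image of the paper's $P+C_{\mathrm{tail}}$), and you derive the upper print from the summation-by-parts identity. The only difference is cosmetic: for the upper print the paper interchanges the minimum over the vertices $p^{(j)}$ with the infimum over $a$ and evaluates $\inf_{a\in\mathcal C_{m+1}}v\cdot a$ in closed form, whereas you use explicit test rays and a direct lower bound, as in the curve theorem.
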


\begin{proof}
Substitution of \(\delta_i=e^{-ta_i}\) in
Proposition~\ref{prop:hd-sphere-covering-law} gives
\eqref{eq:hd-sphere-profile} and the estimate in
Definition~\ref{def:uniform-profile-regular}, uniformly whenever
\(ta_1\) exceeds a fixed threshold. In particular,
\(\rho_{S^m}(a)=\max_{p\in P}p\cdot a\).

By Proposition~\ref{thm:profile-variational},
\[
 \alpha\in\underline{\mathrm{P}}(S^m)
 \quad\Longleftrightarrow\quad
 \alpha\cdot a\leq\max_{p\in P}p\cdot a
 \quad(a\in\mathcal K_{m+1}).
\]
Summation by parts gives the polar cone
\[
 C_{\mathrm{tail}}
 :=\{v\in\mathbb{R}^{m+1}:v\cdot a\leq0\text{ for all }a\in\mathcal K_{m+1}\}
 =\{v:S_k(v)\leq0\text{ for all }k\}.
\]
The set \(P+C_{\mathrm{tail}}\) is closed because \(P\) is compact.
Convex separation, with the boundary directions
\(0\leq a_1\leq\cdots\leq a_{m+1}\) included by continuity, yields
\[
 \alpha\in\underline{\mathrm{P}}(S^m)
 \quad\Longleftrightarrow\quad
 \alpha\in P+C_{\mathrm{tail}},
 \qquad\alpha\geq0,
\]
which is \eqref{eq:hd-sphere-lower-print}.

For the upper print, Proposition~\ref{thm:profile-variational} gives
\[
 \alpha\in\overline{\mathrm{P}}(S^m)
 \quad\Longleftrightarrow\quad
 \min_{j=0,1,2}\inf_{a\in\mathcal{C}_{m+1}}(\alpha-p^{(j)})\cdot a\leq0.
\]
For \(v\in\mathbb{R}^{m+1}\) and \(a\in\mathcal{C}_{m+1}\),
\[
 v\cdot a=S_1(v)+\sum_{k=2}^{m+1}(a_k-a_{k-1})S_k(v),
\]
so
\[
 \inf_{a\in\mathcal{C}_{m+1}}v\cdot a
 =\begin{cases}
 -\infty,&S_k(v)<0\text{ for some }k\geq2,\\
 S_1(v),&S_k(v)\geq0\text{ for all }k\geq2.
 \end{cases}
\]
Applying this to \(v=\alpha-p^{(j)}\), and using
\[
 \begin{gathered}
 \max_j S_1(p^{(j)})=m,\qquad
 \max_j S_2(p^{(j)})=m-\tfrac12,\\
 \max_j S_k(p^{(j)})=m-k+2\quad(3\leq k\leq m+1),
 \end{gathered}
\]
proves \eqref{eq:hd-sphere-upper-print}.
\end{proof}

\section{Comparison with Hausdorff dimension prints}
\label{sec:hausdorff-box-comparison}

For ordinary box dimensions, a lower--upper gap occurs when
\(\log M_F(r)/\log(1/r)\) has distinct lower and upper
limits. Box dimension prints also depend on the aspect ratios of
the covering boxes. This dependence can separate the lower and upper
prints even when the ordinary box dimensions agree. For the segment
\(L=[0,1]\times\{0\}\),
\[
 N(L;\delta_1,\delta_2)\asymp\delta_1^{-1},
 \qquad 0<\delta_2\leq\delta_1<1.
\]
The covering number is independent of the transverse scale
\(\delta_2\), up to constant factors. Consequently,
\[
 \underline{\mathrm{P}}(L)=\{(u,0):0\leq u\leq1\},
 \qquad
 \overline{\mathrm{P}}(L)=\{(u,v)\in[0,\infty)^2:u+v\leq1\}.
\]
For \(\alpha=(0,1)\), the weighted covering number is comparable to
\(\delta_2/\delta_1\). It stays bounded away from zero along
\((\delta_1,\delta_2)=(r,r)\), but tends to zero along
\((r,r^2)\). The two ordinary box dimensions of \(L\) are nevertheless
both equal to \(1\).

Thus a difference between the lower and upper prints need not arise
from oscillations in isotropic covering numbers. Membership in
\(\underline{\mathrm{P}}(F)\) requires a positive weighted covering bound uniformly
over all sufficiently small admissible shapes, whereas membership
in \(\overline{\mathrm{P}}(F)\) requires such a bound along only one sequence of
shapes and scales.

Recall that for every \(F\subset\mathbb{R}^n\),
\begin{equation*}
        \mathrm P_{\mathrm H}(F)
        \subseteq
        \underline{\mathrm P}(F)
        \subseteq
        \overline{\mathrm P}(F).
\end{equation*}
The inclusion of the Hausdorff print in the lower box print can
also be strict, even for a countable compact set.

\begin{example}\label{ex:reciprocal-sequence}
Let
\[
 D=\bigl(\{0\}\cup\{k^{-1}:k\in\mathbb N\}\bigr)\times\{0\}.
\]
Then
\[
 \mathrm P_{\mathrm H}(D)=\{(0,0)\},\qquad
 \underline{\mathrm P}(D)=\{(u,0):0\leq u\leq\tfrac12\},\qquad
 \overline{\mathrm P}(D)
 =\{(u,v)\in[0,\infty)^2:u+v\leq\tfrac12\}.
\]
\end{example}

We give two sufficient conditions for equality in the first
inclusion.

For a bounded set \(E\subset\mathbb{R}^n\), write
\[
 M_r(E)=N(E;(r,\ldots,r)),\qquad M_r(\varnothing)=0.
\]
The first condition bounds the proportion of isotropic covering
boxes needed for each rectangular portion of \(F\).

\begin{proposition}
\label{prop:rectangular-covering-print-equality}
Let \(F\subset\mathbb{R}^n\) be non-empty and compact. Suppose that there
exist \(C\geq1\), \(\eta_0\in(0,1)\), and a sequence
\(r_j\downarrow0\) such that
\begin{equation}\label{eq:rectangular-covering-nonconcentration}
 \limsup_{j\to\infty}
 \frac{M_{r_j}(F\cap B)}{M_{r_j}(F)}
 \leq \frac{C}{N(F;\delta)}
\end{equation}
for every \(\delta\in\Delta_n\) with \(\delta_1<\eta_0\) and
every arbitrarily oriented box \(B\in\mathcal{R}_n(\delta)\).
Then
\begin{equation}\label{eq:rectangular-covering-print-equality}
 \mathrm P_{\mathrm H}(F)=\underline{\mathrm{P}}(F).
\end{equation}
\end{proposition}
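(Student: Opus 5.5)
Since \(\mathrm P_{\mathrm H}(F)\subseteq\underline{\mathrm P}(F)\) holds for every set by \eqref{eq:rogers-box-chain}, only the reverse inclusion needs proof. The plan is a mass distribution argument. We build a single finitely additive set function from normalised isotropic counts along \(r_j\), and use the nonconcentration hypothesis \eqref{eq:rectangular-covering-nonconcentration} to show that it gives each admissible box mass at most \(C/N(F;\delta)\). Fix \(\alpha\in\underline{\mathrm P}(F)\). By Remark~\ref{re:1}(i) there are \(c,\eta>0\) with \(N(F;\delta)\delta^\alpha\geq c\) whenever \(\delta_1<\eta\). We must show \(\mu^\alpha(F)>0\).

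For a Borel set \(E\) put \(\nu_j(E)=M_{r_j}(F\cap E)/M_{r_j}(F)\). Each \(\nu_j\) is subadditive, monotone, and satisfies \(\nu_j(F)=1\) by Proposition~\ref{prop:basic-covering-calculus}(i). Now take a cover of \(F\) by boxes \(B_k\in\mathfrak B_n\) with \(\operatorname{diam}B_k\leq\eta'\), where \(\eta'\) is chosen so small that every ordered side satisfies \(l_1(B_k)<\min\{\eta,\eta_0\}\). Since \(F\) is compact, we first enlarge each box slightly (changing \(r^\alpha\) by a factor at most \(1+\varepsilon\)) so that it is open, and then pass to a finite subcover \(B_1,\ldots,B_K\).

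Each \(B_k\) lies in \(\mathcal R_n(\delta^{(k)})\), where \(\delta^{(k)}=(l_1(B_k),\ldots,l_n(B_k))\). We may perturb the sides so that \(\delta^{(k)}\in(0,1)^n\), with strict ordering handled as an equality case, which is allowed in \(\Delta_n\). Finite subadditivity gives
\[
 1\leq\sum_{k=1}^K\nu_j(B_k).
\]
Taking \(\limsup_j\), and using that a finite sum has \(\limsup\) at most the sum of the \(\limsup\)s, we get
\[
 1\leq\sum_k\limsup_j\nu_j(B_k)\leq\sum_k\frac{C}{N(F;\delta^{(k)})}\leq\frac Cc\sum_k(\delta^{(k)})^\alpha=\frac Cc\sum_k r^\alpha(B_k).
\]
Hence \(\sum_k r^\alpha(B_k)\geq c/C\), up to the factor \(1+\varepsilon\). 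This bound then passes to the original countable cover, since the finite subcover's sum is bounded by the full sum. Therefore \(\mu^\alpha_{\eta'}(F)\geq c/(C(1+\varepsilon))\), and letting \(\eta'\downarrow0\) gives \(\mu^\alpha(F)>0\), that is, \(\alpha\in\mathrm P_{\mathrm H}(F)\).

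The main obstacle is bookkeeping at the boundary: replacing arbitrary (possibly closed or degenerate) boxes in Rogers' covers by open admissible ones with sides in \((0,1)\), while controlling \(r^\alpha\). Compactness is needed to reduce to finitely many boxes, so that the \(\limsup\) in \eqref{eq:rectangular-covering-nonconcentration} can be applied termwise. Boxes with a zero side do not occur, since \(\mathfrak B_n\) consists of open boxes with \(l_n>0\). A small dilation about the centre by factor \(1+\varepsilon\) changes the weight by at most \((1+\varepsilon)^{S_1(\alpha)}\), which is harmless.
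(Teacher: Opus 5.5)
Your proposal is correct and follows the paper's argument essentially verbatim. The steps match: compactness to pass to a finite subcover, finite subadditivity of the normalised isotropic counts along \(r_j\), the nonconcentration hypothesis applied box by box, and the uniform bound \(N(F;\delta)\delta^\alpha\geq c\), giving every cover cost at least \(c/C\); the paper phrases this quantitatively as \(C^{-1}\underline{\mathcal B}^\alpha(F)\leq\mu^\alpha(F)\). The dilation and perturbation bookkeeping you flag as the main obstacle is unnecessary. Members of \(\mathfrak B_n\) are already open with positive sides, and their ordered side-length vector automatically lies in \(\Delta_n\) once the diameter is below \(\min\{\eta,\eta_0\}<1\).
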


\begin{proof}
We prove
\(C^{-1}\underline{\mathcal{B}}^\alpha(F)\leq\mu^\alpha(F)\) for every
\(\alpha\in[0,\infty)^n\). There is nothing to prove if
\(\underline{\mathcal{B}}^\alpha(F)=0\). Otherwise fix
\(0<b<\underline{\mathcal{B}}^\alpha(F)\). By the definition of the lower limit,
there exists \(\eta_1\in(0,\eta_0)\) such that
\begin{equation}\label{eq:print-equality-uniform-bound}
 N(F;\delta)\delta^\alpha\geq b
 \qquad(\delta\in\Delta_n,\ \delta_1<\eta_1).
\end{equation}
Let \(\{B_i\}_{i\geq1}\) be any cover of \(F\) by open boxes
of diameter less than \(\eta_1\). Compactness gives a finite
subcover \(B_1,\ldots,B_k\), after relabelling. Let
\(\delta^{(i)}\) be the ordered side-length vector of \(B_i\).
For each \(j\), finite subadditivity of the covering number gives
\[
 1\leq
 \sum_{i=1}^k\frac{M_{r_j}(F\cap B_i)}{M_{r_j}(F)}.
\]
Taking upper limits and using
\eqref{eq:rectangular-covering-nonconcentration} and
\eqref{eq:print-equality-uniform-bound}, we obtain
\[
\begin{aligned}
 1
 &\leq\sum_{i=1}^k
       \limsup_{j\to\infty}
       \frac{M_{r_j}(F\cap B_i)}{M_{r_j}(F)}\\
 &\leq C\sum_{i=1}^k\frac{1}{N(F;\delta^{(i)})}\\
 &\leq\frac{C}{b}
       \sum_{i=1}^k(\delta^{(i)})^\alpha.
\end{aligned}
\]
Every such cover therefore has cost at least \(b/C\). Taking the
infimum over covers and then letting the diameter bound tend to
zero gives \(\mu^\alpha(F)\geq b/C\). Letting
\(b\uparrow\underline{\mathcal{B}}^\alpha(F)\) proves the inequality.
\end{proof}

The criterion is useful when a construction provides local and
global covering counts directly. It suffices to have
\[
 M_{r_j}(F)\geq c q_j,\qquad
 M_{r_j}(F\cap B)
 \leq\frac{Cq_j}{N(F;l(B))}+o_B(q_j),
 \qquad l(B)=(l_1(B),\ldots,l_n(B)),
\]
along one common sequence, with \(q_j>0\) and \(c,C\) independent
of \(B\). The error need only vanish after division by \(q_j\)
for each fixed box; no compatible measure on the successive
coverings is required. 

For product families satisfying the additional agreement of upper box and 
Hausdorff dimensions in each factor, the proposition identifies the lower box print 
with the classical Hausdorff-print region. It also applies to compact subsets of positive 
product measure, which need not themselves be products.

\begin{proposition}
\label{prop:positive-product-measure-print-equality}
Let \(F\subset\mathbb{R}^n\) be non-empty and compact. Suppose that there
are compact sets \(E_i\subset\mathbb{R}\) and numbers
\(1\geq s_1\geq\cdots\geq s_n\geq0\) such that
\begin{equation}\label{eq:product-scalar-hypotheses}
 F\subset E_1\times\cdots\times E_n,
 \qquad
 0<\mathcal{H}^{s_i}(E_i)<\infty,
 \qquad
 \overline{\dim}_{\mathrm{B}} E_i=s_i
 \quad(1\leq i\leq n).
\end{equation}
Let
\[
 \nu=\bigotimes_{i=1}^n\bigl(\mathcal{H}^{s_i}|_{E_i}\bigr).
\]
If \(\nu(F)>0\), then
\begin{equation}\label{eq:positive-product-print-formula}
 \mathrm P_{\mathrm H}(F)=\underline{\mathrm{P}}(F)
 =\left\{\alpha\in[0,\infty)^n:
       S_k(\alpha)\leq\sum_{i=k}^n s_i
       \text{ for }1\leq k\leq n\right\}.
\end{equation}
\end{proposition}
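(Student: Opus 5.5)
The plan is to show that the explicit region
\[
W=\Bigl\{\alpha\in[0,\infty)^n:S_k(\alpha)\leq\textstyle\sum_{i=k}^n s_i\ (1\leq k\leq n)\Bigr\}
\]
satisfies \(\underline{\mathrm P}(F)\subseteq W\subseteq\mathrm P_{\mathrm H}(F)\). Together with \eqref{eq:rogers-box-chain}, this gives \eqref{eq:positive-product-print-formula}. Write \(\sigma=(s_1,\ldots,s_n)\), so that \(W=\{\alpha\geq0:\alpha\preccurlyeq_{\mathrm{tail}}\sigma\}\).

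\emph{Upper inclusion.} For \(\delta\in\Delta_n\), cover \(E_1\times\cdots\times E_n\) by coordinate boxes whose \(i\)-th side has length \(\delta_i\). These boxes are admissible, since \(\delta\) is already ordered. Proposition~\ref{prop:basic-covering-calculus}\textup{(i)} and \(\overline{\dim}_{\mathrm B}E_i=s_i\) then give, for each \(\varepsilon>0\),
\[
N(F;\delta)\lesssim_\varepsilon\prod_i\delta_i^{-s_i-\varepsilon}.
\]
Suppose \(\alpha\notin W\), so \(S_k(\alpha)>\sum_{i\geq k}s_i\) for some \(k\).
\begin{itemize}
\item If \(k=1\), use isotropic scales \(\delta=(r,\ldots,r)\).
\item If \(k\geq2\), use \(\delta_1=\cdots=\delta_{k-1}=r\) and \(\delta_k=\cdots=\delta_n=r^R\).
\end{itemize}
In the second case the weighted count is at most a constant times \(r^{E}\), where
\[
E=\sum_{i<k}(\alpha_i-s_i-\varepsilon)+R\Bigl(S_k(\alpha)-\sum_{i\geq k}s_i-(n-k+1)\varepsilon\Bigr).
\]
Choosing \(\varepsilon\) small and then \(R\) large makes \(E>0\). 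Hence \(N(F;\delta)\delta^\alpha\to0\) along \(r\to0\), and \(\alpha\notin\underline{\mathrm P}(F)\). This is the same tail-sum computation as in Proposition~\ref{prop:ambient-cube-prints}.

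\emph{Lower inclusion.} Since \(\mathrm P_{\mathrm H}(F)\) is defined by \(\mu^\alpha(F)>0\), I will use a mass distribution argument with \(\nu\).
\begin{enumerate}
\item Each \(\mathcal H^{s_i}|_{E_i}\) has upper density at most \(1\) almost everywhere. By Egorov's theorem, choose compact \(E_i'\subset E_i\) and \(\rho>0\) such that \(\mathcal H^{s_i}(E_i\cap I)\leq 2|I|^{s_i}\) for every interval \(I\) with \(|I|<\rho\) meeting \(E_i'\). I choose the \(E_i'\) so that \(F'=F\cap\prod E_i'\) still has \(\nu(F')>0\).
\item The key estimate is that every box \(B\), of arbitrary orientation with ordered sides \(l_1\geq\cdots\geq l_n\) and small diameter, satisfies
\[
\nu(F'\cap B)\lesssim\prod_i l_i^{s_i}.
\]
\item For \(\alpha\in W\), summation by parts \eqref{eq:tail-summation-by-parts} with \(l_i\leq1\) gives \(r^\alpha(B)\geq\prod_i l_i^{s_i}\). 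The mass distribution principle then yields \(\mu^\alpha(F)\gtrsim\nu(F')>0\).
\end{enumerate}

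\emph{Proving the key estimate (main obstacle).} Write \(B=x+Q\operatorname{diag}(l_i)(-\tfrac12,\tfrac12)^n\). Expanding \(\det Q\) shows there is a permutation \(\pi\) with \(\prod_i|Q_{\pi(i),i}|\geq 1/n!\). Successively eliminating coordinates, in a Gaussian-elimination order guided by \(\pi\), gives a Fubini slicing of \(B\). In that slicing, once the previously chosen coordinates are fixed, the set of admissible values of coordinate \(\pi(i)\) is an interval of length \(\lesssim_n l_i\). Integrating the one-dimensional density bounds from step 1 coordinate by coordinate then gives
\[
\nu(F'\cap B)\lesssim\prod_i l_i^{s_{\pi(i)}}.
\]
Since both \((s_i)\) and \((\log l_i)\) are non-increasing, the rearrangement inequality gives
\[
\prod_i l_i^{s_{\pi(i)}}\leq\prod_i l_i^{s_i}.
\]
The delicate part is choosing the elimination order so that the slice lengths are controlled by the matching \(l_i\) with constants depending only on \(n\). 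The non-vanishing product \(\prod_i|Q_{\pi(i),i}|\) is exactly what should provide this control. If the naive order fails, I would instead use an LU factorisation of a row-permuted \(Q\) with partial pivoting, whose triangular factors have entries bounded by \(1\).
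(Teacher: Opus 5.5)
\textbf{The gap is the oriented-box estimate.} Your upper inclusion $\underline{\mathrm{P}}(F)\subseteq W$ matches the paper: product covers, then the test scales $\delta_i=r$ for $i<k$ and $\delta_i=r^R$ for $i\geq k$. The outline of your lower inclusion also matches the paper's: regularise each factor measure (your Egorov step is an acceptable substitute for the paper's sets $E_{i,m}$), bound the mass of an oriented box by $\prod_i l_i^{s_i}$, compare via tail sums, and apply the mass distribution principle. The problem is the step you flag yourself. The oriented-box estimate is not proved, and the mechanism you propose for it is false.

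A permutation with $\prod_i|Q_{\pi(i),i}|\geq1/n!$ does not control the slices. Take $n=4$ and let $Q$ have columns
$f_1=\tfrac12(1,1,1,1)$, $f_2=\tfrac12(1,1,-1,-1)$, $f_3=\tfrac12(1,-1,1,-1)$, $f_4=\tfrac12(1,-1,-1,1)$.
\begin{itemize}
\item Every entry has modulus $\tfrac12$, so every permutation gives product $1/16$, and $\pi=\mathrm{id}$ is a legitimate choice.
\item For the box centred at $0$, fixing $x_1=x_2=0$ leaves the constraints $|x_3+x_4|<l_2$ and $|x_3-x_4|<l_4$.
\item With $x_4$ still free, the admissible values of $x_3$ therefore form an interval of length $l_2+l_4$. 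This is not $\lesssim l_3$ when $l_3\ll l_2$.
\item Equivalently, the $2\times2$ minor of $Q$ in rows and columns $\{3,4\}$ vanishes. Eliminating $x_4$ via the fourth constraint leaves a zero pivot for $x_3$.
\end{itemize}

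Your fallback also fails as described, for two reasons.
\begin{itemize}
\item In $\Pi Q=LU$ with partial pivoting, only the unit lower factor has entries bounded by $1$. The upper factor can grow by up to $2^{n-1}$.
\item Processing the edges in their natural order $1,\dots,n$ combines each constraint with wider ones. The resulting slice widths are then only bounded by multiples of $l_1$.
\end{itemize}

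\textbf{How your route could be repaired.} It can work, but it needs a real argument:
\begin{itemize}
\item process the constraints starting from the shortest edge;
\item take as pivot coordinate the largest entry of the current reduced constraint row;
\item bound the pivots below using $|\det Q|=1$, the entry growth of at most a factor $2$ per step, and Hadamard's inequality.
\end{itemize}
Then each reduced constraint absorbs only bounded multiples of narrower constraints, so it has width $\lesssim_n l_i$, and your rearrangement step finishes the estimate.

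\textbf{The paper's route.} The paper avoids pivot analysis by inducting on $n$.
\begin{itemize}
\item Choose $e_j$ with $|\langle v,e_j\rangle|\geq n^{-1/2}$, where $v$ is parallel to a shortest edge. Then fibres of $B$ parallel to $e_j$ have length at most $\sqrt n\,l_n$.
\item The projection of $B$ onto $e_j^\perp$ lies in a genuine box with ordered sides $\lesssim l_1,\dots,l_{n-1}$, by the singular values of the projected edge matrix. This restores orthogonality at each step.
\item The induction hypothesis then applies to the remaining factors with exponents $s_1,\dots,\widehat{s_j},\dots,s_n$. The paper's Frostman bound holds for intervals of all lengths, so projected boxes of any size are allowed.
\item Finally, $s_i\geq s_{i+1}$ and $l_n\leq l_i$ absorb the mismatched exponents. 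This plays the role of your rearrangement inequality.
\end{itemize}
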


\begin{proof}

Write \(\nu_i=\mathcal{H}^{s_i}|_{E_i}\), and denote the region in
\eqref{eq:positive-product-print-formula} by \(K\).
For \(s_i>0\), define the increasing Borel sets
\[
 E_{i,m}
 =\left\{x\in E_i:
   \nu_i([x-r,x+r])\leq m r^{s_i}
   \text{ for all }0<r<m^{-1}\right\}.
\]
The upper-density theorem \cite[Proposition~5.1]{Falconer2014} gives
\[
 \nu_i\left(E_i\setminus\bigcup_{m=1}^{\infty}E_{i,m}\right)=0.
\]
If \(I\cap E_{i,m}\neq\varnothing\), choose \(x\in I\cap E_{i,m}\).
Writing \(\ell=|I|>0\), we have \(I\subset[x-\ell,x+\ell]\), hence
\[
 \nu_i(E_{i,m}\cap I)
 \leq
 \begin{cases}
   m\ell^{s_i}, & \ell<m^{-1},\\
   \nu_i(E_i), & \ell\geq m^{-1},
 \end{cases}
 \leq C_{i,m}\ell^{s_i},
 \qquad
 C_{i,m}=\max\{m,m^{s_i}\nu_i(E_i)\}.
\]
For \(s_i=0\), take \(E_{i,m}=E_i\) and \(C_{i,m}=\nu_i(E_i)\).

Since \(\nu(F)>0\), monotone convergence gives an \(m\) such that
the product measure
\[
 \lambda=\bigotimes_{i=1}^n\lambda_i,
 \qquad \lambda_i=\nu_i|_{E_{i,m}},
\]
satisfies \(\lambda(F)>0\). For this choice of \(m\),
\begin{equation}\label{eq:restricted-factor-frostman}
 \lambda_i(I)\leq C_i|I|^{s_i}
 \qquad(1\leq i\leq n).
\end{equation}

Every arbitrarily oriented box \(B\), with ordered
side lengths \(l_1\geq\cdots\geq l_n>0\), satisfies
\begin{equation}\label{eq:product-oriented-box-bound}
 \lambda(B)\leq C\prod_{i=1}^n l_i^{s_i}.
\end{equation}
To prove this, we argue by induction on \(n\). The case \(n=1\) is
\eqref{eq:restricted-factor-frostman}. Let \(v\) be a unit vector
parallel to a shortest edge of \(B\), and choose \(j\) with
\(|v\cdot e_j|\geq n^{-1/2}\). Every section of \(B\) parallel to
\(e_j\) has length at most \(\sqrt n\,l_n\). Moreover, the
orthogonal projection of \(B\) onto \(e_j^\perp\) is contained in
an \((n-1)\)-dimensional box whose ordered side lengths are at most
\(C_n l_1,\ldots,C_n l_{n-1}\). 
Fubini's theorem and induction, applied to the remaining factors,
give
\[
\begin{aligned}
 \lambda(B)
 &\leq C l_n^{s_j}
       \prod_{i=1}^{j-1}l_i^{s_i}
       \prod_{i=j}^{n-1}l_i^{s_{i+1}}\\
 &=C\left(\prod_{i=1}^n l_i^{s_i}\right)
       \prod_{i=j}^{n-1}\left(\frac{l_n}{l_i}\right)^{s_i-s_{i+1}}\\
 &\leq C\prod_{i=1}^n l_i^{s_i},
\end{aligned}
\]
which proves \eqref{eq:product-oriented-box-bound}.

If \(\alpha\in K\) and \(l_1<1\), tail-sum ordering gives
\[
 \prod_{i=1}^n l_i^{s_i}
 \leq\prod_{i=1}^n l_i^{\alpha_i}=r^\alpha(B).
\]
Hence \(\lambda|_F(B)\leq C r^\alpha(B)\). Summing over a cover by
sufficiently small boxes yields
\(\mu^\alpha(F)\geq\lambda(F)/C>0\). Consequently,
\[
 K\subseteq\mathrm P_{\mathrm H}(F)\subseteq\underline{\mathrm{P}}(F).
\]

For the reverse inclusion, let \(M_i(r)\) be the least number of
open intervals of length at most \(r\) covering \(E_i\). By
\(\overline{\dim}_{\mathrm{B}} E_i=s_i\), for every \(\varepsilon>0\),
\[
 M_i(r)\leq C_\varepsilon r^{-s_i-\varepsilon}
\]
for all sufficiently small \(r\). Taking products of these interval
covers gives
\begin{equation}\label{eq:product-dimensional-upper-bound}
 N(F;\delta)\leq C_\varepsilon
       \prod_{i=1}^n\delta_i^{-s_i-\varepsilon}.
\end{equation}
Suppose that \(\alpha\notin K\), and choose \(k\) with
\(\sum_{i=k}^n(\alpha_i-s_i)>0\). There is an ordered vector
\(a=(a_1,\ldots,a_n)\), with \(a_i=1\) for \(i<k\) and
\(a_i=L\geq1\) for \(i\geq k\), such that
\[
 \sum_{i=1}^n a_i(\alpha_i-s_i)>0.
\]
Choose \(\varepsilon>0\) sufficiently small that
\(\sum_i a_i(\alpha_i-s_i-\varepsilon)>0\), and put
\(\delta_i=r^{a_i}\). Equation~\eqref{eq:product-dimensional-upper-bound}
then implies
\[
 N(F;\delta)\delta^\alpha
 \leq C_\varepsilon
 r^{\sum_i a_i(\alpha_i-s_i-\varepsilon)}
 \longrightarrow0
 \qquad(r\downarrow0).
\]
Thus \(\underline{\mathrm{P}}(F)\subseteq K\).
\end{proof}

Table~\ref{tab:classical-print-comparison} compares the prints of
several examples. All exponent vectors in the table have nonnegative
coordinates. In ambient dimension \(d\), write
\(S_k=S_k(\alpha)=\sum_{i=k}^{d}\alpha_i\). We use \((u,v)\) in
the planar rows and \((u,v,w)\) in the \(S^2\) row, and omit empty
unions.

Let \(L=[0,1]\times\{0\}\) and \(Q_n=[0,1]^n\). For \(0<t<1\),
let \(C_t\subset[0,1]\) be the self-similar Cantor set generated by
\(x\mapsto r_t x\) and \(x\mapsto1-r_t+r_t x\), where
\(r_t=2^{-1/t}\), and put \(C_1=[0,1]\). Each \(C_t\) is Ahlfors
\(t\)-regular. Set
\[
 E_s=\{(\cos t,\sin t):t\in C_s\},\qquad 0<s\leq1.
\]
The product row assumes \(0<q\leq p<1\). The curve \(\Gamma\) is
of uniform finite type \((1,\ldots,n)\), as in
Theorem~\ref{thm:finite-type-curve-prints}. The set \(\gamma(T)\)
satisfies the hypotheses of Theorem~\ref{thm:fractal-parameter-prints},
with an Ahlfors \(s\)-regular parameter set \(T\).
The segment and Cantor-product families occur in
\cite{ReyesRogers1994,Rogers1988}; \(E_s\) is an Ahlfors-regular
specialisation of the circular Cantor family in
\cite{ReyesRogers1994}.

\begin{table}[htbp]
\centering
\small
\renewcommand{\arraystretch}{1.65}
\setlength{\tabcolsep}{5pt}
\caption{Comparison of Hausdorff and box dimension prints for the
selected examples. The attributions and derivations are given in
the preceding text.}
\label{tab:classical-print-comparison}
\begin{tabular}{@{}lcc@{}}
\toprule
Set \(F\)
& \(\mathrm P_{\mathrm H}(F)=\underline{\mathrm{P}}(F)\)
& \(\overline{\mathrm{P}}(F)\)\\
\midrule
\(L\)
& \(\{(u,0):u\leq1\}\)
& \(\{u+v\leq1\}\)\\[2pt]

\(Q_n\)
& \(\{S_k\leq n-k+1\ (1\leq k\leq n)\}\)
& \(\displaystyle
   \{S_1\leq n\}\cup
   \bigcup_{k=2}^{n}\{S_k<n-k+1\}\)\\[2pt]

\(S^1\)
& \(\{u+2v\leq1\}\)
& \(\{u+v\leq1\}\cup\{v<\tfrac12\}\)\\[2pt]

\(E_s\)
& \(\{u+2v\leq s\}\)
& \(\{u+v\leq s\}\cup\{v<\tfrac{s}{2}\}\)\\[2pt]

\(C_p\times C_q\)
& \(\{u+v\leq p+q,\ v\leq q\}\)
& \(\{u+v\leq p+q\}\cup\{v<q\}\)\\[2pt]

\(\Gamma\subset\mathbb{R}^n\)
& \(\displaystyle\left\{\sum_{i=1}^{n}i\alpha_i\leq1\right\}\)
& \(\displaystyle
   \{S_1\leq1\}\cup
   \bigcup_{k=2}^{n}\{S_k<1/k\}\)\\[2pt]

\(\gamma(T)\subset\mathbb{R}^n\)
& \(\displaystyle\left\{\sum_{i=1}^{n}i\alpha_i\leq s\right\}\)
& \(\displaystyle
   \{S_1\leq s\}\cup
   \bigcup_{k=2}^{n}\{S_k<s/k\}\)\\[2pt]

\(S^2\subset\mathbb{R}^3\)
& \(\left\{
   \begin{array}{l}
      u+v+2w\leq2,\\
      u+2v+2w\leq3
   \end{array}\right\}\)
& \(\begin{gathered}
   \{u+v+w\leq2\}\cup\{v+w<\tfrac32\}\\
   {}\cup\{w<1\}
   \end{gathered}\)\\
\bottomrule
\end{tabular}
\end{table}
\FloatBarrier
\section*{Statements and Declarations}
\noindent\textbf{Funding.}
The author's visit to the University of St Andrews was supported
by the China Scholarship Council (award No.\ 202506840058).

\medskip
\noindent\textbf{Competing interests.}
The author has no relevant financial or non-financial interests
to disclose.

\medskip
\noindent\textbf{Data availability.}
No datasets were generated or analysed in this theoretical study.
All mathematical constructions and proofs are included in the article.

\end{document}